
\documentclass{article}
\usepackage{amssymb}
\usepackage{amsfonts}
\usepackage{amsmath}
\usepackage{geometry}
\usepackage{natbib}
\usepackage{latexsym}
\usepackage{theorem}
\usepackage{graphicx}

\setcounter{MaxMatrixCols}{10}

\newtheorem{theorem}{Theorem}

\newtheorem{corollary}[theorem]{Corollary}

{\theorembodyfont{\normalfont}
\newtheorem{remark}[theorem]{Remark}
\newtheorem{example}[theorem]{Example}
}

\newtheorem{lemma}[theorem]{Lemma}

\newtheorem{proposition}[theorem]{Proposition}

\newenvironment{proof}[1][Proof]{\noindent\textbf{#1.} }{\hfill\ \rule{0.5em}{0.5em}}
\geometry{left=3cm,right=3cm,top=3cm,bottom=3
cm}

\begin{document}

\title{On the class of distributions of subordinated L\'{e}vy processes}
\author{Orimar Sauri \\
Department of Economics and CREATES\\
Aarhus University\\
osauri@creates.au.dk \and Almut E.~D.~Veraart \\
Department of Mathematics\\
Imperial College London\\
a.veraart@imperial.ac.uk}
\date{\today }
\maketitle

\begin{abstract}
This article study the class of distributions obtained by subordinating L%
\'{e}vy processes and L\'{e}vy bases. To do this we derive properties of a
suitable mapping obtained via L\'{e}vy mixing. We show that our results can
be used to solve the so-called recovery problem for general L\'{e}vy bases
as well as for moving average processes which are driven by subordinated L\'{e}%
vy processes.
\end{abstract}

\noindent
{\bf Keywords:} Subordination, L\'{e}vy  basis, L\'{e}vy processes, L\'{e}vy mixing, L\'{e}vy  semistationary processes,  recovery problem.


\section{Introduction}

L\'{e}vy processes and L\'{e}vy bases constitute important building blocks
for constructing realistic models for temporal and or spatial phenomena. In
addition, it has often been noted that \emph{stochastic
volatility/intermittency}, which can be regarded as stochastic variability
beyond the fluctuations described by the L\'{e}vy noise, is present in
empirical data of e.g.~asset prices in finance or turbulence in physics. One
possibility of accounting for such additional variability is by using the
concept of subordination, see \cite{Clark1973}, \cite{Monroe1978} and also 
\cite{VeraartWinkel2010} for additional references. In this article, we are
interested in characterizing the law of subordinated L\'{e}vy processes and L%
\'{e}vy bases. We will approach this problem from the general angle of
defining a mapping 
\begin{equation}
\Phi _{M}\left( \rho \right) \left( A\right) :=\int_{0}^{\infty }\mu \left(
s,A\right) \rho \left( ds\right) ,\text{ \ \ }A\in \mathcal{B}\left( \mathbb{%
R}\backslash \left\{ 0\right\} \right) ,  \label{mapping}
\end{equation}%
where $M=\left( \mu \left( s,\cdot \right) \right) _{s\geq 0}$ denotes a
measurable collection of probability measures and $\rho $ is a L\'{e}vy
measure on $\mathbb{R}^{+}$. We note that if $\mu \left( s,\cdot \right) $
is chosen as the law of a L\'{e}vy process at time $s$ and $\rho $ is a $%
\sigma $-finite measure supported on the positive half line, then $\Phi _{M}$
describes the L\'{e}vy measure of a subordinated L\'{e}vy process. We will
discuss in which sense the mapping is related to the concept of \emph{L\'{e}%
vy mixing}, as introduced by \cite{BNPabreuThorbS2013}.

While the mapping \eqref{mapping} can be defined for various measures $\mu $%
, we are mainly interested in the situation when $\mu (s,dx)=\mu ^{s}(dx)$
for an infinitely divisible (ID) law $\mu $. In that case, we shorten the
notation and typically write $\Phi _{\mu }=\Phi _{M}$. In particular, we
focus on three scenarios in more detail: The cases when 1.) $\Phi _{\mu }$
is restricted to the finite L\'{e}vy measures on $(0,\infty )$, 2.) $\mu $
is symmetric, and 3.) $\mu $ is supported on $\left( 0,\infty \right)$. Our
first results in this context contain a detailed description of the
properties of the mapping $\Phi _{\mu }$. In particular, we characterize its
L\'{e}vy domain and some properties of its range as well as establish
conditions under which the mapping $\Phi _{\mu }$ is one-to-one.

These results can then be used to describe the law of subordinated L\'{e}vy
processes and bases.

As an application of our results, we study the so-called \emph{recovery
problem} for subordinated ID processes: If we observe a subordinated L\'{e}%
vy process $X_t=L_{T_t}$ (where $L$ is a L\'{e}vy process and $T$ is an
independent subordinator), can we recover $T$ from $X$ and, if so, in which
sense? In answering this question we will build upon and further extend
earlier work by \cite{WinMaT01} and \cite{GemMAdYor02}.

Moreover, we will go one step further and use such a subordinated L\'{e}vy
process as the driving noise in a moving average type process and derive
suitable conditions which allow us to recover the subordinator from
observations of the moving average process. In the special case of a
Brownian moving average process restricted to be a semimartingale, such a
problem has been addressed by \cite{ComteRenault1996}. More recently, \cite%
{Sau14} studied the invertibility of infinitely divisible continuous moving
averages processes and we can build on this result to solve the recovery
problem in the more general set-up which includes non-semimartingale
processes.

The outline for the remaining article is as follows. Section 2 introduces
the basic notation and background material on L\'{e}vy processes, L\'{e}vy
bases, subordination and meta-times. In Section 3, we will define the
mapping \eqref{mapping} and study its key properties. We will then use these
results in Section 4 to describe the law of subordinated L\'{e}vy processes
and bases. The recovery problem for moving average processes driven by
subordinated L\'{e}vy processes is then studied in Section 5.

\section{Preliminaries and basic results}

Throughout this article $\left( \Omega ,\mathcal{F},\left( \mathcal{F}%
_{t}\right) _{t\in \mathbb{R}},\mathbb{P}\right) $ denotes a filtered
probability space satisfying the usual conditions of right-continuity and
completeness. A two-sided L\'{e}vy process $\left( L_{t}\right) _{t\in 
\mathbb{R}}$ on $\left( \Omega ,\mathcal{F},\mathbb{P}\right) $ is a
real-valued stochastic process taking values in $\mathbb{R}$ with
independent and stationary increments whose sample paths are almost surely c%
\`{a}dl\`{a}g. If we drop the stationarity in the increments of $\left(
L_{t}\right) _{t\in \mathbb{R}}$, we call it an additive processes. We say
that $\left( L_{t}\right) _{t\in \mathbb{R}}$ is an $\left( \mathcal{F}%
_{t}\right) $-L\'{e}vy process if for all $t>s,$ $L_{t}-L_{s}$ is $\mathcal{F%
}_{t}$-measurable and independent of $\mathcal{F}_{s}$.

Denote by $ID\left( \mathbb{R}\right) $ the space of probability measures on 
$\mathbb{R}$ that are infinitely divisible. The law of a L\'{e}vy process is
in a bijection with $ID\left( \mathbb{R}\right) $ and $L_{1}$ has a L\'{e}%
vy-Khintchine representation given by 
\begin{equation*}
\log \widehat{\mu }\left( z\right) =iz\gamma -\frac{1}{2}z^{2}b+\int_{%
\mathbb{R}^{d}}\left[ e^{izx}-1-i\tau \left( x\right) z\right] \nu \left( 
\mathrm{d}x\right) ,\text{ \ \ }z\in \mathbb{R}\text{,}
\end{equation*}%
where $\widehat{\mu }$ is the characteristic function of the distribution of 
$L_{1}$, $\gamma \in \mathbb{R}$, $b\geq 0$ and $\nu $ is a L\'{e}vy
measure, i.e. $\nu \left( \left\{ 0\right\} \right) =0$ and $\int_{\mathbb{R}%
} (1\wedge \left\vert x\right\vert ^{2}) \nu \left( \mathrm{d}x\right)
<\infty .$ Here, we assume that the truncation function $\tau $ is given by $%
\tau \left( x\right) =x\mathbf{1}_{\left\{ \left\vert x\right\vert \leq
1\right\} },\ $for $x\in \mathbb{R}$. When $\mu $ has support on $\left(
0,\infty \right) $, we write $\mu \in ID\left( \mathbb{R}^{+}\right) $. We
say that $\mu \in ID\left( \mathbb{R}\right) $ is strictly $\alpha $-stable
if for any $s>0$ and $z\in \mathbb{R}$, $\widehat{\mu }\left( z\right) ^{s}=%
\widehat{\mu }\left( s^{1/\alpha }z\right) $. If $\mu \in ID\left( \mathbb{R}%
\right) $ has characteristic triplet $\left( \gamma ,b,\nu \right) $
satisfying $\int_{\mathbb{R}}\left( 1\wedge \left\vert x\right\vert \right)
\nu \left( \mathrm{d}x\right) <\infty $, we use the triplet relative to the
truncation function $\tau _{0}\left( x\right) =0,$ and we write $\left(
\beta _{0},b,\nu \right) _{0}$. Here $\beta _{0}=\gamma -\int_{\left\vert
x\right\vert \leq 1}x\nu \left( \mathrm{d}x\right) $. Moreover, in the same
context, if $b=0$, we will write $\left( \beta _{0},\nu \right) _{0}$
instead of $\left( \beta _{0},0,\nu \right) _{0}$ and we refer to it as the
characteristic pair.

By \emph{infinitely divisible continuous time moving average} (IDCMA)
process we mean a real-valued stochastic process $\left( X_{t}\right) _{t\in 
\mathbb{R}}$ on $\left( \Omega ,\mathcal{F},\left( \mathcal{F}_{t}\right)
_{t\in \mathbb{R}},\mathbb{P}\right) $ given by the following formula%
\begin{equation}
X_{t}:=\theta +\int_{\mathbb{R}}f\left( t-s\right) dL_{s},\text{ \ \ }t\in 
\mathbb{R}\text{.}  \label{eqn1.1}
\end{equation}%
where $\theta \in \mathbb{R}$, $f$ is a deterministic function and $L$ is a L%
\'{e}vy process with triplet $\left( \gamma ,b,\nu \right) $. Observe that $%
X $ is strictly stationary and infinitely divisible in the sense of \cite%
{idprocessesmaejimasatoole} and \cite{sdfields}.

A \textit{L\'{e}vy semistationary process} ($\mathcal{LSS}$) on $\mathbb{R}$
is a stochastic process $\left( Y_{t}\right) _{t\in \mathbb{R}}$ on $\left(
\Omega ,\mathcal{F},\left( \mathcal{F}_{t}\right) _{t\in \mathbb{R}},\mathbb{%
P}\right) $ which is described by the following dynamics%
\begin{equation}
Y_{t}=\theta +\int_{-\infty }^{t}g\left( t-s\right) \sigma
_{s}dL_{s}+\int_{-\infty }^{t}q\left( t-s\right) a_{s}ds,\text{ \ \ }t\in 
\mathbb{R}\text{,}  \label{eqn1.2}
\end{equation}%
where $\theta \in \mathbb{R}$, $L$ is a L\'{e}vy process with triplet $%
\left( \gamma ,b,\nu \right) $, $g$ and $q$ are deterministic functions such
that $g\left( x\right) =q\left( x\right) =0$ for $x\leq 0$, and $\sigma $
and $a$ are adapted c\`{a}dl\`{a}g processes with $\sigma $ predictable.
When $L$ is a two-sided Brownian motion, $Y$ is called a \textit{Brownian
semistationary process }($\mathcal{BSS}$). For further references to the
theory and applications of L\'{e}vy semistationary processes, see for
instance \citet{dayaheadalmut} and \citet{aproximatinglss}. See also %
\citet{RePEc:bla:jtsera:v:34:y:2013:i:3:p:385-404}. In the absence of a
drift and a stochastic volatility component, an $\mathcal{LSS}$ process is
an IDCMA process.

\subsection{L\'{e}vy basis}

In this part we present the results of a time change of a L\'{e}vy basis
that are going to be used to study the marginal distribution of a
subordinated L\'{e}vy basis. We refer to \cite{BNJP(12)} for more details on
the background material.

Let $\mathcal{S}$ be a non-empty set and $\mathcal{R}$ a $\delta $-ring of
subsets of $\mathcal{S}$ having an increasing sequence $\left\{
S_{n}\right\} \subset \mathcal{S}$ converging to $\mathcal{S}$. A stochastic
field $L=\left\{ L\left( A\right) :A\in \mathcal{R}\right\} $ defined on $%
\left( \Omega ,\mathcal{F},\mathbb{P}\right) $ is called \emph{independently
scattered random measure} (i.s.r.m. for short), if for every sequence $%
\left\{ A_{n}\right\} _{n\geq 1}$ of disjoint sets in $\mathcal{R}$, the
random variables $\left( L\left( A_{n}\right) \right) _{n\geq 1}$ are
independent, and if $\cup _{n\geq 1}A_{n}$ belongs to $\mathcal{R}$, then we
also have%
\begin{equation*}
L\left( \bigcup_{n\geq 1}A_{n}\right) =\sum_{n\geq 1}L\left( A_{n}\right) ,%
\text{ \ \ a.s.,}
\end{equation*}%
where the series is assumed to converge almost surely. A L\'{e}vy basis is
an i.s.r.m. for which the law of $L\left( A\right) $ belongs to $ID\left( 
\mathbb{R}\right) $ for any $A\in \mathcal{R}$. Any L\'{e}vy basis admits a L%
\'{e}vy-Khintchine representation: 
\begin{equation*}
\mathrm{C}\{z\ddagger L\left( A\right) \}=\int_{A}\psi \left( z,s\right)
c\left( \mathrm{d}s\right) \text{, \ \ }z\in \mathbb{R},A\in \mathcal{R},
\end{equation*}%
where $\mathrm{C}\{\theta \ddagger X\}$ denotes the cumulant function of a
random variable $X$ and%
\begin{equation}
\psi \left( z,s\right) :=iz\gamma \left( s\right) -\frac{1}{2}b\left(
s\right) z^{2}+\int_{\mathbb{R}}\left[ e^{izx}-1-i\tau \left( x\right) z%
\right] \nu \left( s,\mathrm{d}x\right) ,\text{ \ \ }z\in \mathbb{R},s\in 
\mathcal{S}\text{.}  \label{eqn1.3}
\end{equation}%
The functions $\gamma, b$ and $\nu \left( \cdot ,dx\right) $ are $\left( 
\mathcal{S},\mathcal{B}_{\mathcal{S}}\right) /\left( \mathbb{R},\mathcal{B}%
\left( \mathbb{R}\right) \right) $-measurable (in which $\mathcal{B}_{%
\mathcal{S}}:=\sigma \left( \mathcal{R}\right) $) with $b$ being
non-negative and $\nu \left( s,\cdot \right) $ is a L\'{e}vy measure for
every $s\in \mathcal{S}$. The measure $c$ is defined on $\left( \mathcal{S},%
\mathcal{B}_{\mathcal{S}}\right) $ and is called the \emph{control measure}
of $L$. The quadruplet $\left( \gamma, b,\nu, c\right) $ characterizes
uniquely the distribution of $L$ and for this reason it will be called the 
\emph{characteristic quadruplet} of $L$. Define $L^{\prime }=\left(
L^{\prime }\left( s\right) \right) _{s\in \mathcal{S}}$ the collection of
infinitely divisible random variables in $\mathbb{R}$ for which $L^{\prime
}\left( s\right) $ has characteristic triplet $\left( \gamma \left( s\right)
,b\left( s\right) ,\nu \left( s,\cdot \right) \right) $ for any $s\in 
\mathcal{S}$. The collection $L^{\prime }$ is known as L\'{e}vy seeds and
they satisfy that $\psi \left( z,s\right) =\mathrm{C}\{z\ddagger L^{\prime
}\left( s\right) \}$ for all $s\in \mathcal{S}$.

When $\gamma ,b$ and $\nu \left( \cdot ,dx\right) $ do not depend on $s$ and 
$c$ is the Lebesgue measure (up to a constant), $L$ is termed as \emph{%
homogeneous}. In this case we say that $L$ has triplet $\left( \gamma ,b,\nu
\right) .$ In the case that $\mathcal{S=}\mathbb{R}^{k}$, homogeneity is
equivalent to saying that $L\left( A+x\right) \overset{d}{=}L\left( A\right) 
$ for any $x\in \mathbb{R}^{k}$. Note that when $L$ is homogeneous in $%
\mathbb{R}^{k}$ with triplet $\left( \gamma ,b,\nu \right) $, then%
\begin{equation}
\mathcal{L}\left[ L\left( A\right) \right] =\mu ^{Leb\left( A\right) }\text{%
, \ \ }A\in \mathcal{B}_{b}\left( \mathbb{R}^{k}\right) ,  \label{eqn1.4}
\end{equation}%
where $\mathcal{B}_{b}\left( \mathbb{R}^{k}\right) $ denotes the bounded
Borel sets on $\mathbb{R}^{k}$ and $\mu $ is the infinite divisible law
associated with the triplet $\left( \gamma ,b,\nu \right) $.

If we add the extra condition that $L\left( \left\{ x\right\} \right) =0$
a.s.~for all $x\in \mathbb{R}^{k}$, then $L$ has a L\'{e}vy-It\^{o}
decomposition: We have that almost surely%
\begin{equation}
L\left( A\right) =\int_{A}\gamma \left( s\right) c\left( \mathrm{d}s\right)
+W\left( A\right) +\int_{A}\int_{\left\vert x\right\vert >1}xN\left(
dxds\right) +\int_{A}\int_{\left\vert x\right\vert \leq 1}x\widetilde{N}%
\left( dxds\right) \text{, \ \ }A\in \mathcal{B}_{b}\left( \mathbb{R}%
^{k}\right) ,  \label{eqn1.5}
\end{equation}%
where $W$ is a centered Gaussian process with $\mathbb{E}\left[ W\left(
A\right) W\left( B\right) \right] =\int_{A\cap B}b\left( s\right) c\left( 
\mathrm{d}s\right) $ for all $A,B\in \mathcal{R}$, $\widetilde{N}$ and $N$
are compensated and non-compensated Poisson random measures on $\mathbb{R}%
^{k}\times \mathbb{R}$ with intensity $\nu \left( s,\mathrm{d}x\right)
c\left( \mathrm{d}s\right) $, respectively. Additionally, $W$ and $N$ are
independent. See \cite{LevyItodecompositionrm} for more details.

In the homogeneous case, when $\mathcal{S}=\mathbb{R}$, the process $%
L_{t}-L_{s}:=L\left( \left( s,t\right] \right) $ is an $\left( \mathcal{F}%
_{t}\right) $-L\'{e}vy process. Reciprocally, if $\left( L_{t}\right) _{t\in 
\mathbb{R}}$ is a L\'{e}vy process on $\mathbb{R}$, the random measure
characterized by $L\left( \left( s,t\right] \right) :=L_{t}-L_{s}$ for $%
s\leq t$ is an homogeneous L\'{e}vy basis on $\mathbb{R}$. Therefore,
homogeneous L\'{e}vy bases and L\'{e}vy processes are in a bijection. More
generally, L\'{e}vy bases on $\mathbb{R}$ are in a bijection with \textit{%
natural additive processes}. See for instance \cite{satolevybaseiiprocesses}%
. For the multiparameter case, i.e. $\mathcal{S}=\mathbb{R}^{k}$ the same
result holds but for the so-called \emph{lamp processes}. See \cite%
{LevyItodecompositionrm} and \cite{AdMonScisWil(83)} for more detail about
these type of processes.

In analogy to subordinators on $\mathbb{R}^{+}$, let us consider the
following L\'{e}vy basis%
\begin{equation}
T\left( A\right) =\int_{A}\beta _{0}\left( s\right) c\left( ds\right)
+\int_{A}\int_{0}^{\infty }xN\left( dxds\right) \text{, \ \ }A\in \mathcal{B}%
_{b}\left( \mathbb{R}^{k}\right) ,  \label{eqn1.6}
\end{equation}%
with $\beta _{0}\geq 0$, $c$ being a continuous $\sigma $-finite measure on $%
\mathcal{B}\left( \mathbb{R}^{k}\right) $ and $N$ a Poisson random measure
with intensity $\nu \left( s,\mathrm{d}x\right) c\left( \mathrm{d}s\right) $%
. The measure $\nu$ satisfies that for $c$-almost all $s\in \mathbb{R}^{k}$, 
$\nu \left( s,\left( -\infty ,0\right) \right) =0$ and $\int_{0}^{\infty
}\left( 1\wedge x\right) \nu \left( s,\mathrm{d}x\right) <\infty $. Observe
that $T$ is non-negative and for almost all $\omega \in \Omega $, $T\left(
\omega ,\cdot \right) $ is in fact a true measure.

From \cite{AdMonScisWil(83)} and \cite{LevyItodecompositionrm}, $L$ is
almost surely a non-negative measure-valued field if and only if the
previously stated conditions hold. Being more precise, if $T$ is a L\'{e}vy
basis on $\mathbb{R}^{k}$ with characteristic quadruplet $\left( \beta
,b,\rho ,c\right) $ where $c$ is continuous, then for almost all $\omega \in
\Omega $, $T\left( \omega ,\cdot \right) $ is a true measure if and only if
the following conditions hold for $c$-almost all $s\in \mathbb{R}^{k}${}

\begin{enumerate}
\item $b\left( s\right) =0;$

\item $\nu \left( s,\left( -\infty ,0\right) \right) =0;$

\item $\int_{0}^{\infty }\left( 1\wedge x\right) \nu \left( s,\mathrm{d}%
x\right) <\infty ;$

\item $\beta \left( s\right) -\int_{0}^{\infty }x\rho \left( s,dx\right)
\geq 0.$
\end{enumerate}

In this case $T$ admits the representation given in (\ref{eqn1.6}) with $%
\beta _{0}\left( s\right) =\beta \left( s\right) -\int_{0}^{1}x\nu \left( s,%
\mathrm{d}x\right) $. Moreover, $L$ can be extended to the whole $\sigma $%
-algebra $\mathcal{B}\left( \mathbb{R}^{k}\right) $ by 
\begin{equation*}
T\left( A\right) :=\lim_{n\rightarrow \infty }T\left( A\cap S_{n}\right) ,%
\text{ \ \ }A\in \mathcal{B}\left( \mathbb{R}^{k}\right) \text{,}
\end{equation*}%
where the limit exists almost surely.

\subsection{Meta-times}

Meta-times where introduced in \cite{BNJP(12)} as a way to extend the
concept of a time change of L\'{e}vy processes (see for instance \cite%
{idprocessesmaejimasatoole}) to the spatiotemporal case. A \emph{meta-time} $%
\varphi :\mathcal{B}_{b}\left( \mathbb{R}^{k}\right) \rightarrow \mathcal{B}%
_{b}\left( \mathbb{R}^{k}\right) $ is a set function that maps disjoint sets
into disjoint sets and for any disjoint sequence $\left( A_{n}\right)
_{n\geq 1}\subset \mathcal{B}_{b}\left( \mathbb{R}^{k}\right) $ such that $%
\cup _{n\geq 1}A_{n}\in \mathcal{B}_{b}\left( \mathbb{R}^{k}\right) ,$ it
holds that $\varphi \left( \cup _{n\geq 1}A_{n}\right) =\cup _{n\geq
1}\varphi \left( A_{n}\right) .$ Note that if $\varphi $ is a meta-time,
then the random measure defined by%
\begin{equation}
L_{m}\left( A\right) :=L\left( \varphi \left( A\right) \right) ,\text{ \ \ }%
A\in \mathcal{B}_{b}\left( \mathbb{R}^{k}\right) \text{,}  \label{eqn1.7.0}
\end{equation}%
is a L\'{e}vy basis provided that $L$ is L\'{e}vy basis as well. If $L$ is
homogeneous, then $L_{m}$ satisfies 
\begin{equation}
\mathcal{L}\left[ L_{m}\left( A\right) \right] =\mu ^{m\left( A\right) }%
\text{, \ \ }A\in \mathcal{B}_{b}\left( \mathbb{R}^{k}\right) ,
\label{eqn1.7}
\end{equation}%
where $m$ stands for the measure induced by $\varphi $, this is $m:=Leb\circ
\varphi $. The most important result in \cite{idprocessesmaejimasatoole}
allows to get the reversal of the previous property: Let $m:\mathcal{B}%
\left( \mathbb{R}^{k}\right) \rightarrow \mathbb{R}^{+}$ be a measure which
is finite in $\mathcal{B}_{b}\left( \mathbb{R}^{k}\right) $ and such that $%
m\left( \left\{ x\in \mathbb{R}^{k}:x_{i}=0\text{ for some }i=1,\ldots
,k\right\} \right) =0$. Then, there exists a measurable function (not
necessarily unique) $\phi :\mathbb{R}^{k}\rightarrow \mathbb{R}^{k}$ such
that 
\begin{equation*}
m\left( A\right) =Leb\left( \phi ^{-1}\left( A\right) \right) ,\text{ \ \ }%
A\in \mathcal{B}\left( \mathbb{R}^{k}\right) .
\end{equation*}%
Putting $\varphi =\phi ^{-1}$, we see that in this case $L_{m}$ can be
defined as in (\ref{eqn1.7.0}), and (\ref{eqn1.7}) holds. Therefore, if $%
\left( T\left( A\right) \right) _{A\in \mathcal{B}\left( \mathbb{R}%
^{k}\right) }$ is a measure-valued stochastic field, for which almost surely%
\begin{equation}
T\left( \left\{ x\in \mathbb{R}^{k}:x_{i}=\text{ for some }i=1,\ldots
,k\right\} \right) =0,  \label{eqn1.8}
\end{equation}%
then for every $\omega \in \Omega _{0}$, with $\Omega _{0}$ being the set
where (\ref{eqn1.8}) holds and $T$ is a true measure, there exists a
measurable mapping $\phi _{\omega }:\mathbb{R}^{k}\rightarrow \mathbb{R}^{k}$
such that%
\begin{equation*}
T\left( \omega ,A\right) =Leb\left( \phi _{\omega }^{-1}\left( A\right)
\right) ,\text{ \ \ }A\in \mathcal{B}\left( \mathbb{R}^{k}\right) .
\end{equation*}%
Furthermore, if $T$ is independent of $L$, then we define $L_{T}$ to be the 
\emph{subordinated L\'{e}vy basis} by $T$ as the random measure defined as%
\begin{equation}
L_{T}\left( \omega ,A\right) :=L\left( \omega ,\varphi \left( \omega
,A\right) \right) ,\text{ \ \ }\omega \in \Omega _{0},A\in \mathcal{B}%
_{b}\left( \mathbb{R}^{k}\right) \text{,}  \label{eqn1.9.0}
\end{equation}%
and an arbitrary value outside of $\Omega _{0}$. In the previous equation, $%
\varphi \left( \omega ,A\right) =\phi _{\omega }^{-1}\left( A\right) $ is
the meta-time induced by $T\left( \omega ,\cdot \right) $. Note that by
equation (\ref{eqn1.7}), if $L$ is an homogeneous L\'{e}vy basis, then 
\begin{equation}
\mathcal{L}\left[ \left. L_{T}\left( A\right) \right\vert T\right] =\mu
^{T\left( A\right) }\text{, \ \ }A\in \mathcal{B}_{b}\left( \mathbb{R}%
^{k}\right) .  \label{eqn1.9}
\end{equation}%
However, in general, $L_{T}$ is not a L\'{e}vy basis: Put $L=Leb$ and 
\begin{equation*}
T\left( A\right) :=ULeb\left( A\right) ,\text{ \ \ }U\sim \text{Uniform}%
\left[ 0,1\right] \text{.}
\end{equation*}%
Then $T\left( \omega ,\cdot \right) $ is a true measure satisfying (\ref%
{eqn1.8}), but $L_{T}$ is not infinitely divisible. Indeed, observe that in
this case 
\begin{equation*}
L_{T}\left( A\right) =T\left( A\right) ,\text{ \ \ }A\in \mathcal{B}%
_{b}\left( \mathbb{R}^{k}\right) .
\end{equation*}%
which is not infinitely divisible. Nevertheless, if $T$ is a L\'{e}vy basis,
with minor changes in the proof of Theorem 5.1 in \cite{BNJP(12)}, we get
that:

\begin{theorem}[\protect\cite{BNJP(12)}]
\label{TheoremcqsubLB}Let $L$ be an homogeneous L\'{e}vy basis with
characteristic triplet $\left( \gamma ,b,\nu \right) $ and $T$ a
non-negative L\'{e}vy basis with characteristic quadruplet $\left( \gamma
,0,\rho ,c\right) $. Then $L_{T}$ as in (\ref{eqn1.9.0}) is a L\'{e}vy basis
with characteristic quadruplet $\left( \overline{\gamma },\overline{b},%
\overline{\nu },\overline{c}\right) $ given by

\begin{enumerate}
\item $\overline{c}=c;$

\item $\overline{\gamma }\left( s\right) =\gamma \beta _{0}\left( s\right)
+\int_{0}^{\infty }\int_{\left\vert x\right\vert \leq 1}x\mu ^{r}\left(
dx\right) \rho \left( s,dr\right) ;$

\item $\overline{b}\left( s\right) =b\beta _{0}\left( s\right) ;$

\item $\overline{\nu }\left( s,dx\right) =\beta _{0}\left( s\right) \nu
\left( dx\right) +\int_{0}^{\infty }\mu ^{r}\left( dx\right) \rho \left(
s,dr\right) ,$

where $\mu $ is the ID law associated to $\left( \gamma ,b,\nu \right) $ and 
$\beta _{0}\left( s\right) =\gamma \left( s\right) -\int_{0}^{\infty }x\rho
\left( s,dx\right) .$
\end{enumerate}
\end{theorem}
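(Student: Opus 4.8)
The plan is to compute the characteristic function of $L_{T}(A)$ for a fixed $A\in \mathcal{B}_{b}(\mathbb{R}^{k})$ by conditioning on $T$, and then to read off the characteristic quadruplet by comparison with the L\'{e}vy--Khintchine representation \eqref{eqn1.3}. First, that $L_{T}$ is genuinely a L\'{e}vy basis---that $\left( L_{T}(A_{n})\right) _{n\geq 1}$ are independent for pairwise disjoint $A_{n}$ and that $L_{T}$ is $\sigma $-additive---follows from the proof of Theorem 5.1 in \cite{BNJP(12)} with only notational changes: the meta-time $\varphi \left( \omega ,\cdot \right) $ induced by $T\left( \omega ,\cdot \right) $ maps disjoint sets to disjoint sets and is a functional of $T$ alone, so conditionally on $T$ the variables $L\!\left( \varphi (A_{n})\right) $ are independent because $L$ is independently scattered; taking expectations in the joint conditional characteristic function and then using that $T(A_{1}),T(A_{2}),\dots $ are themselves independent upgrades this to unconditional independence, while $\sigma $-additivity is inherited from that of $L$ and of $\varphi $ (together with finiteness of $T$ on bounded sets). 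The real content is therefore the explicit triplet of each $L_{T}(A)$.

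For the latter, write $\psi _{\mu }(z):=\log \widehat{\mu }(z)$ for the L\'{e}vy exponent of $\mu $ relative to $\tau $, so that $\mathrm{Re}\,\psi _{\mu }(z)=-\tfrac{1}{2}z^{2}b+\int_{\mathbb{R}}(\cos (zx)-1)\,\nu (\mathrm{d}x)\leq 0$. By \eqref{eqn1.9}, $\mathbb{E}[e^{izL_{T}(A)}\mid T]=\widehat{\mu }(z)^{T(A)}=e^{\psi _{\mu }(z)T(A)}$, hence $\mathbb{E}[e^{izL_{T}(A)}]=\mathbb{E}[e^{\psi _{\mu }(z)T(A)}]$. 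Now $T(A)$ is a non-negative infinitely divisible random variable with drift $\beta _{0}(s)\geq 0$ and L\'{e}vy measure $\rho (s,\cdot )$ satisfying $\int_{0}^{\infty }(1\wedge r)\rho (s,\mathrm{d}r)<\infty $; since $\left\vert e^{wr}-1\right\vert \leq \min (2,\left\vert w\right\vert r)$, its Laplace transform $w\mapsto \mathbb{E}[e^{wT(A)}]$ is finite and analytic on $\{\mathrm{Re}\,w<0\}$, extends continuously to $\{\mathrm{Re}\,w\leq 0\}$, and equals
\[
\mathbb{E}\!\left[ e^{wT(A)}\right] =\exp \!\left( \int_{A}\Big[ \beta _{0}(s)\,w+\int_{0}^{\infty }(e^{wr}-1)\,\rho (s,\mathrm{d}r)\Big] c(\mathrm{d}s)\right) ,\qquad \mathrm{Re}\,w\leq 0.
\]
I would then substitute $w=\psi _{\mu }(z)$, which is admissible exactly because $\mathrm{Re}\,\psi _{\mu }(z)\leq 0$.

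The remaining step is to put the resulting exponent into the form $\int_{A}\psi (z,s)\,c(\mathrm{d}s)$ of \eqref{eqn1.3}. Since $e^{r\psi _{\mu }(z)}=\widehat{\mu ^{r}}(z)=\int_{\mathbb{R}}e^{izx}\mu ^{r}(\mathrm{d}x)$, one has $e^{r\psi _{\mu }(z)}-1=\int_{\mathbb{R}}(e^{izx}-1)\,\mu ^{r}(\mathrm{d}x)$, and I would split $e^{izx}-1=(e^{izx}-1-iz\tau (x))+iz\tau (x)$. Using $\left\vert e^{izx}-1-iz\tau (x)\right\vert \leq C_{z}(1\wedge x^{2})$ together with the estimates $\int_{\mathbb{R}}(1\wedge x^{2})\mu ^{r}(\mathrm{d}x)\leq C(1\wedge r)$ and $\big\vert \int_{\left\vert x\right\vert \leq 1}x\,\mu ^{r}(\mathrm{d}x)\big\vert \leq C(1\wedge r)$, both pieces become absolutely integrable against $\rho (s,\mathrm{d}r)\,c(\mathrm{d}s)$ over $A\times (0,\infty )$, so Fubini applies. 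Collecting from $\beta _{0}(s)\psi _{\mu }(z)$ and from the double integral the $iz$-terms ($iz[\gamma \beta _{0}(s)+\int_{0}^{\infty }\int_{\left\vert x\right\vert \leq 1}x\mu ^{r}(\mathrm{d}x)\rho (s,\mathrm{d}r)]$), the quadratic term ($-\tfrac{1}{2}z^{2}b\beta _{0}(s)$), and the jump terms ($\beta _{0}(s)\int (e^{izx}-1-iz\tau (x))\nu (\mathrm{d}x)+\int_{0}^{\infty }\int (e^{izx}-1-iz\tau (x))\mu ^{r}(\mathrm{d}x)\rho (s,\mathrm{d}r)$) recovers precisely $\overline{\gamma },\overline{b},\overline{\nu }$ and $\overline{c}=c$ as in the statement; the atom of $\mu ^{r}$ at the origin (present when $\mu $ is, say, compound Poisson) contributes nothing to the integrand, so $\overline{\nu }(s,\cdot )$ is identified on $\mathbb{R}\setminus \{0\}$. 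One finally checks that $\overline{\nu }(s,\cdot )$ is a L\'{e}vy measure, $\int_{\mathbb{R}\setminus \{0\}}(1\wedge x^{2})\overline{\nu }(s,\mathrm{d}x)\leq \beta _{0}(s)\int (1\wedge x^{2})\nu (\mathrm{d}x)+\int_{0}^{\infty }C(1\wedge r)\rho (s,\mathrm{d}r)<\infty $, and invokes uniqueness of the characteristic quadruplet of a L\'{e}vy basis.

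The step I expect to be the main obstacle is the truncation bookkeeping in the last paragraph. The naive bound $\int_{\left\vert x\right\vert \leq 1}\left\vert x\right\vert \mu ^{r}(\mathrm{d}x)$ is only of order $\sqrt{r}$ as $r\downarrow 0$ when $b>0$, and $\int_{0}^{1}\sqrt{r}\,\rho (s,\mathrm{d}r)$ need not converge for a general subordinator L\'{e}vy measure, so it is essential to use the \emph{signed} integral $\int_{\left\vert x\right\vert \leq 1}x\,\mu ^{r}(\mathrm{d}x)=O(r)$---which one obtains by writing $L_{r}$ as the sum of its $(\gamma ,b,\nu \mathbf{1}_{\{\left\vert x\right\vert \leq 1\}})$-part, of mean $r\gamma $, and its jumps of size exceeding one, which occur with probability $O(r)$---rather than the absolute one when justifying Fubini and when verifying that $\overline{\gamma }(s)$ is finite. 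The remaining technical points (the analytic continuation of the Laplace transform of $T(A)$, and the i.s.r.m.\ / $\sigma $-additivity properties of $L_{T}$) are routine given the independence of $L$ and $T$.
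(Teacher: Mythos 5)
Your argument is correct; note that the paper itself gives no proof of this statement, deferring to Theorem 5.1 of \cite{BNJP(12)} (the Lévy-basis analogue of Theorem 30.1 in \cite{sato}), and your conditioning-on-$T$ computation with the Laplace transform of $T(A)$ evaluated at $\psi_{\mu}(z)$, followed by the Fubini rearrangement, is exactly the standard proof used there. You also correctly isolate the one genuinely delicate point, namely that the \emph{signed} truncated mean $\int_{\left\vert x\right\vert \leq 1}x\,\mu ^{r}(\mathrm{d}x)$ is $O(r)$ (Sato's Lemma 30.3) while its absolute counterpart is not, which is what makes the truncation bookkeeping go through.
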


\begin{remark}
The previous result is the generalization of the corresponding law of a
subordinated L\'{e}vy process. See Theorem 30.1 in \cite{sato} for more
details. Hence, if $T$ is homogeneous, the triplet associated to $L_{T}$
corresponds to the law of a subordinated L\'{e}vy process on the real line.
\end{remark}

\section{A class of L\'{e}vy measures obtained by L\'{e}vy mixing}

In this part we study a certain  mapping obtained by mixing a family of
probability distributions through a L\'{e}vy measure.

Let $\mathfrak{\ M}_{L}\left( \mathbb{R}\right) $ denote the class of L\'{e}%
vy measures on $\mathbb{R}$, i.e.~a $\sigma $-finite measure $\rho$ on $%
\mathcal{B}\left( \mathbb{R}\right)$ belongs to $\mathfrak{M}_{L}\left( 
\mathbb{R}\right) $ if $\rho \left( \left\{ 0\right\} \right) =0$ and $\int_{%
\mathbb{R}}(1\wedge \left\vert x\right\vert ^{2})\rho \left( \mathrm{d}%
x\right) <\infty $. By $\mathfrak{M}_{L}^{1}\left( \mathbb{R}\right) \subset 
\mathfrak{M}_{L}\left( \mathbb{R}\right) $ and $\mathfrak{M}_{L}^{0}\left( 
\mathbb{R}\right) \subset \mathfrak{M}_{L}\left( \mathbb{R}\right) $ we mean
the subclasses of L\'{e}vy measures satisfying $\int_{\mathbb{R}}(1\wedge
\left\vert x\right\vert ) \rho \left( \mathrm{d}x\right) <\infty $ and $\rho
\left( \mathbb{R}\right) <\infty $, respectively. We define in a similar way 
$\mathfrak{M}_{L}\left( \mathbb{R}^{+}\right) $, $\mathfrak{M}_{L}^{1}\left( 
\mathbb{R}^{+}\right) $ and $\mathfrak{M}_{L}^{0}\left( \mathbb{R}%
^{+}\right) .$ Consider a measurable collection of probability measures $%
M=\left( \mu \left( s,\cdot \right) \right) _{s\geq 0}$. We introduce the
following mapping 
\begin{equation*}
\Phi _{M}\left( \rho \right) \left( A\right) :=\int_{0}^{\infty }\mu \left(
s,A\right) \rho \left( ds\right) ,\text{ \ \ }\rho \in \mathfrak{M}%
_{L}\left( \mathbb{R}^{+}\right) ,A\in \mathcal{B}\left( \mathbb{R}%
\backslash \left\{ 0\right\} \right) ,
\end{equation*}%
and $\Phi _{M}\left( \rho \right) \left( \left\{ 0\right\} \right) =0.$ If $%
\Phi _{M}\left( \rho \right) \in \mathfrak{M}_{L}\left( \mathbb{R}\right) $,
then it belongs to the subclass of L\'{e}vy measures appearing by \textit{L%
\'{e}vy mixing}. See \cite{BNPabreuThorbS2013} for more examples of L\'{e}vy
mixing.

Note that when $\mu \left( s,\cdot \right) $ is the law of a L\'{e}vy
process at time $s,$ $\Phi _{M}$ corresponds to the L\'{e}vy measure of a
subordinated L\'{e}vy process by a subordinator having no drift and L\'{e}vy
measure $\rho $. In the same context, $\Phi _{M}$ also describes the mixed
probability distribution obtained by subordinating a L\'{e}vy process trough
a random time. Being more precise, if $\mu \left( s,\cdot \right) $ is the
law of a L\'{e}vy process $L$ at time $s$ and $\rho $ is a probability
measure with support on $\mathbb{R}^{+}$, then $\Phi _{M}\left( \rho \right) 
$ is the distribution of the random variable $L_{T^{\rho }}$, for a random
time $T^{\rho }\sim \rho $ and independent of $L$. Indeed, by independence 
\begin{eqnarray}
\mathbb{P}\left( L_{T^{\rho }}\in A\right) &=&\int_{0}^{\infty }\mathbb{P}%
\left( L_{s}\in A\right) \rho \left( ds\right)  \label{eqn2.0} \\
&=&\Phi _{M}\left( \rho \right) \left( A\right) \text{, \ \ }A\in \mathcal{B}%
\left( \mathbb{R}\backslash \left\{ 0\right\} \right) \text{.}  \notag
\end{eqnarray}

However, such a mapping is not only limited to a family of infinitely
divisible distributions. For instance, if we choose 
\begin{equation*}
\mu \left( s,dx\right) =\frac{1}{2\pi }\left( s-x^{2}\right) ^{-1/2}\mathbf{1%
}_{\left( 0,s^{1/2}\right) }\left( x\right) dx,\text{ \ \ }s\geq 0,
\end{equation*}%
the mapping $\Phi _{M}$ is well defined on $\mathfrak{M}_{L}^{1}\left( 
\mathbb{R}^{+}\right) $ and it describes all the L\'{e}vy measures of the
so-called class $A\left( \mathbb{R}\right) $. Moreover, $\Phi _{M}$ is
one-to-one and it is not an \textit{upsilon transformation}. That is, there
is no $\sigma $-finite measure $\eta $ such that 
\begin{equation*}
\Phi _{M}\left( \rho \right) \left( A\right) =\int_{0}^{\infty }\eta \left(
s^{-1}A\right) \rho \left( ds\right) ,\text{ \ \ }\rho \in \mathfrak{M}%
_{L}\left( \mathbb{R}^{+}\right) ,A\in \mathcal{B}\left( \mathbb{R}%
\backslash \left\{ 0\right\} \right) ,
\end{equation*}%
see \cite{ArizBNPAbreu2010} and \cite{MaejPAbreuSato2012} for more details
on these aspects as well as \cite{MaejSatoPAbreu2013} for generalizations.
For a comprehensive introduction to upsilon transformations we refer to \cite%
{MR2421179}. It is remarkable that in this case $\mu \left( s,\cdot \right) $
is not infinitely divisible in the classical sense, but according to \cite%
{FrUNaM2005}, $\mu \left( s,\cdot \right) $ is infinitely divisible with
respect to the \textit{monotone convolution}. In fact, such a distribution
plays the role of the Gaussian distribution under this operation. In
contrast, if 
\begin{equation*}
\mu \left( s,dx\right) =\frac{1}{\sqrt{2\pi s}}e^{-\frac{x^{2}}{2s}}dx,\text{
\ \ }s\geq 0,
\end{equation*}%
the induced mapping $\Phi _{M},$ as we show later, is actually equivalent to
an upsilon transformation, it is one-to-one and corresponds to the L\'{e}vy
measure of a subordinated Brownian motion.

All the examples above show the importance of $\Phi _{M}$. Hence, in this
section we study some properties of this transformation including its
injectivity.

In this paper, we are interested in the case when $\mu \left( s,dx\right)
=\mu ^{s}\left( dx\right) $ for $\mu \in ID\left( \mathbb{R}\right) .$ In
this context, we will write $\Phi _{\mu }$ instead of $\Phi _{M}$. We focus
on the following three cases:

\begin{enumerate}
\item The restriction of $\Phi _{\mu}$ to $\mathfrak{M}_{L}^{0}\left( 
\mathbb{R}^{+}\right) ;$

\item $\mu $ is symmetric;

\item $\mu $ having support on $\left[ 0,\infty \right) $.
\end{enumerate}

Surprisingly, the one-to-one property of $\Phi _{\mu }$ in cases $2$ and $3$%
, as we show later, is equivalent to the case $1$. We start by describing
the domain and the range of $\Phi _{\mu }$.

\subsection{The domain and the range of $\Phi _{\protect\mu }$}

For a given $\mu \in ID\left( \mathbb{R}\right) ,$ consider the L\'{e}%
vy-domain $\Phi _{\mu }$, denoted by 
\begin{equation*}
\mathcal{D}_{L}\left( \Phi _{\mu }\right) :=\left\{ \rho \in \mathfrak{M}%
_{L}\left( \mathbb{R}^{+}\right) :\Phi _{\mu }\left( \rho \right) \in 
\mathfrak{M}_{L}\left( \mathbb{R}\right) \right\} ,
\end{equation*}%
and the range of $\Phi _{\mu }$, denoted by 
\begin{equation*}
\mathcal{R}_{L}\left( \Phi _{\mu }\right) =\left\{ \Phi _{\mu }\left( \rho
\right) \in \mathfrak{M}_{L}\left( \mathbb{R}\right) :\rho \in \mathcal{D}%
_{L}\left( \Phi _{\mu }\right) \right\}.
\end{equation*}
In this subsection we describe both $\mathcal{D}_{L}\left( \Phi _{\mu
}\right) $ and $\mathcal{R}_{L}\left( \Phi _{\mu }\right) .$ In order to do
that, we prepare a lemma.

\begin{lemma}
Let $\mu \in ID\left( \mathbb{R}\right) $ be a non-degenerate probability
measure. We have that the following asymptotic holds%
\begin{equation*}
\int_{\mathbb{R}}\left( 1\wedge \left\vert x\right\vert ^{2}\right) \mu
^{s}\left( dx\right) \sim cs\text{, \ \ as }s\downarrow 0,
\end{equation*}%
for some $c>0.$
\end{lemma}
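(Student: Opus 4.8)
The plan is to exploit the Lévy--Khintchine representation of $\mu^s$ to obtain an exact expansion of $\int_{\mathbb{R}}(1\wedge|x|^2)\,\mu^s(dx)$ in $s$, since $\mu^s$ has characteristic triplet $(s\gamma, sb, s\nu)$. First I would reduce the statement to an equivalent one about a quantity that is more directly controlled by the triplet, for instance $G(s):=\int_{\mathbb{R}}(1\wedge|x|^2)\,\mu^s(dx)$. The key structural fact is that for an infinitely divisible law, as $s\downarrow 0$ the measure $\mu^s$ concentrates most of its mass near a point (the drift location) with the ``small-jump'' Gaussian part and the ``big-jump'' compound-Poisson part each contributing mass of order $s$; more precisely $\mu^s = e^{-s c_0}(\delta_{a_s} * (\text{Gaussian}_s)) * (\text{compensated remainder})$ where $c_0$ is the total mass of the truncated jump measure, so the probability that $L_s$ makes at least one ``large'' jump is $\Theta(s)$, and on the complement $L_s$ is within $O(\sqrt{s})$ of its mean with high probability. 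The upshot is $G(s)=s\big(b + \int_{\mathbb{R}}(1\wedge|x|^2)\,\nu(dx)\big) + o(s)$ (with the understanding that if $b=0$ and $\nu=0$ then $\mu$ is degenerate, which is excluded), and we may take $c=b+\int_{\mathbb{R}}(1\wedge|x|^2)\,\nu(dx)>0$.

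The cleanest route to the expansion is via characteristic functions or, more elementarily, by decomposing $L_s \overset{d}{=} Y_s + Z_s$ where $Y_s$ collects the Gaussian part and the jumps of size $\le\varepsilon$ (with a suitable drift correction), and $Z_s$ is a compound Poisson process with jumps of size $>\varepsilon$ and rate $\lambda_\varepsilon = \nu(\{|x|>\varepsilon\})$. On the event $\{Z_s=0\}$, which has probability $e^{-s\lambda_\varepsilon}=1-s\lambda_\varepsilon+o(s)$, we have $L_s=Y_s$ with $\mathbb{E}Y_s = s a_\varepsilon$ and $\mathrm{Var}(Y_s)=s\sigma_\varepsilon^2$ where $\sigma_\varepsilon^2 = b + \int_{|x|\le\varepsilon}x^2\,\nu(dx)$; a second-moment (Chebyshev) bound then gives $\int (1\wedge|x|^2)\,\mathcal{L}(Y_s)(dx) = s\sigma_\varepsilon^2 + o(s)$, uniformly in the relevant regime. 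On the event $\{Z_s\ge 1\}$, of probability $\Theta(s)$, the conditional law of $L_s$ converges (as $s\downarrow 0$) to the law of a single $\nu$-distributed jump of size $>\varepsilon$ plus a negligible perturbation, contributing $s\int_{|x|>\varepsilon}(1\wedge|x|^2)\,\nu(dx)+o(s)$. Adding the two pieces and then letting $\varepsilon\downarrow 0$ recovers $c=b+\int_{\mathbb{R}}(1\wedge|x|^2)\,\nu(dx)$; one must check the two limits ($s\downarrow 0$ then $\varepsilon\downarrow 0$) interchange, which follows from monotone/dominated convergence since $\int_{|x|\le\varepsilon}x^2\,\nu(dx)\downarrow 0$.

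The main obstacle is handling the cross term and the drift bookkeeping carefully: the truncation function $\tau(x)=x\mathbf 1_{\{|x|\le 1\}}$ means the naive decomposition at level $\varepsilon<1$ produces a deterministic drift in $Y_s$ of order $s$, which does not affect $\mathrm{Var}(Y_s)$ but must be shown not to inflate $\int(1\wedge|x|^2)\mathcal L(Y_s)(dx)$ beyond $o(s)$ — this is fine because a deterministic shift of size $O(s)$ changes the truncated second moment by $O(s)\cdot O(\sqrt s) + O(s^2)=o(s)$. A subtler point is ensuring the $o(s)$ error terms are genuinely uniform as $\varepsilon$ is eventually sent to $0$; to avoid this I would instead fix the order of limits, proving for each fixed $\varepsilon$ that $G(s)/s \to \sigma_\varepsilon^2 + \int_{|x|>\varepsilon}(1\wedge|x|^2)\,\nu(dx) =: c_\varepsilon$ as $s\downarrow 0$, noting $G(s)/s$ is sandwiched: $c_\varepsilon \le \liminf G(s)/s \le \limsup G(s)/s \le c$ for every $\varepsilon$, and $c_\varepsilon\uparrow c$. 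Non-degeneracy of $\mu$ guarantees $c>0$ (otherwise $b=0$ and $\nu\equiv 0$, forcing $\mu=\delta_\gamma$), completing the argument.
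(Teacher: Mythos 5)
Your argument is correct in substance and identifies the right constant $c=b+\int_{\mathbb{R}}(1\wedge|x|^{2})\,\nu(dx)$, but it takes a genuinely different route from the paper. The paper's proof is essentially two lines: it invokes the classical fact that the compound Poisson approximations with triplet $(0,0,s_{n}^{-1}\mu^{s_{n}})_{0}$ converge weakly to $\mu$ as $s_{n}\downarrow 0$, which (by the convergence criterion in the proof of Theorem 8.7 of Sato) is equivalent to the weak convergence of the \emph{finite} measures $s_{n}^{-1}(1\wedge|x|^{2})\,\mu^{s_{n}}(dx)$ to $b\,\delta_{0}(dx)+(1\wedge|x|^{2})\,\nu(dx)$; taking total masses gives the asymptotic at once, with positivity of $c$ from non-degeneracy. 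You instead prove the same convergence of total masses by hand, via the L\'evy--It\^o decomposition at a cutoff $\varepsilon$, separate treatment of the no-large-jump and one-large-jump events, and a final sandwich in $\varepsilon$. What the paper's route buys is brevity and the avoidance of all error bookkeeping, at the cost of citing a nontrivial external convergence theorem; your route is self-contained, more quantitative, and makes transparent where each piece of $c$ comes from. One step you should tighten: a plain second-moment (Chebyshev) bound does \emph{not} give $\mathbb{E}\bigl[Y_{s}^{2}\mathbf{1}_{\{|Y_{s}|>1\}}\bigr]=o(s)$ --- Cauchy--Schwarz with fourth moments only yields an error of order $s\,\varepsilon^{2}\sigma_{\varepsilon}^{2}$, i.e.\ $O(s)$ with an $\varepsilon$-dependent constant. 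This is harmless because your sandwich sends $\varepsilon\downarrow 0$ afterwards and the constant vanishes, but the intermediate claim ``$G(s)/s\to c_{\varepsilon}$ for each fixed $\varepsilon$'' cannot be literally true (the left side does not depend on $\varepsilon$) and should be replaced throughout by the two-sided bounds you state at the end; note also that the upper bound $\limsup_{s}G(s)/s\le c$ genuinely requires the one-jump refinement on $\{Z_{s}\ge 1\}$, since the crude bound $1\wedge L_{s}^{2}\le 1$ there only gives $\sigma_{\varepsilon}^{2}+\lambda_{\varepsilon}$, which can exceed $c$.
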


\begin{proof}
Let $\mu $ be a non-degenerate probability measure on $ID\left( \mathbb{R}%
\right) $ with characteristic triplet $\left( \gamma ,b,\nu \right) $. It is
a well known fact that if $s_{n}\downarrow 0,$ then the sequence of
probability measures having characteristic triplet $\left( 0,0,\frac{1}{s_{n}%
}\mu ^{s_{n}}\right) _{0}$ converges weakly to $\mu $. From the proof of
Theorem 8.7 in \cite{sato}, we get that in this case, the sequence of finite
measures $\rho _{n}\left( dx\right) :=\frac{1}{s_{n}}\left( 1\wedge
\left\vert x\right\vert ^{2}\right) \mu ^{s_{n}}\left( dx\right) $ converges
weakly to the finite measure%
\begin{equation*}
\rho \left( dx\right) :=b\delta _{0}\left( dx\right) +\left( 1\wedge
\left\vert x\right\vert ^{2}\right) \nu \left( dx\right) .
\end{equation*}%
In particular, $\frac{1}{s_{n}}\int_{\mathbb{R}}\left( 1\wedge \left\vert
x\right\vert ^{2}\right) \mu ^{s_{n}}\left( dx\right) \rightarrow b+\int_{%
\mathbb{R}\backslash \left\{ 0\right\} }\left( 1\wedge \left\vert
x\right\vert ^{2}\right) \nu \left( dx\right) >0$ as $n\rightarrow \infty ,$
as required.
\end{proof}

Now we proceed to describe the domain of $\Phi _{\mu }$.

\begin{proposition}
\label{popdomrange}Let $\mu \in ID\left( \mathbb{R}\right) $. Then

\begin{enumerate}
\item $\mathcal{D}_{L}\left( \Phi _{\mu }\right) =\mathfrak{M}_{L}^{1}\left( 
\mathbb{R}^{+}\right) $ provided that $\mu $ is not a Dirac delta measure.

\item In general $\mathcal{R}_{L}\left( \Phi _{\mu }\right) \subsetneq 
\mathfrak{M}_{L}\left( \mathbb{R}\right) $.
\end{enumerate}
\end{proposition}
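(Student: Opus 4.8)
The plan is to treat the two assertions separately. For the first, the key observation is that, since $\Phi_{\mu}(\rho)(\{0\})=0$ by definition, Tonelli's theorem yields
\[
\int_{\mathbb{R}}\left(1\wedge\left\vert x\right\vert ^{2}\right)\Phi_{\mu}(\rho)(\mathrm{d}x)=\int_{0}^{\infty}h(s)\,\rho(\mathrm{d}s),\qquad h(s):=\int_{\mathbb{R}}\left(1\wedge\left\vert x\right\vert ^{2}\right)\mu^{s}(\mathrm{d}x),
\]
so that $\rho\in\mathcal{D}_{L}(\Phi_{\mu})$ precisely when $\int_{0}^{\infty}h(s)\,\rho(\mathrm{d}s)<\infty$. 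First I would record the trivial bound $0\le h\le 1$, which makes the contribution of $[1,\infty)$ harmless, since $\int_{[1,\infty)}h\,\mathrm{d}\rho\le\rho([1,\infty))<\infty$ for every L\'{e}vy measure $\rho$ on $\mathbb{R}^{+}$. Then I would invoke the preceding Lemma: for non-degenerate $\mu$ it gives $h(s)\sim cs$ as $s\downarrow 0$ with $c>0$, hence there are $\delta>0$ and $0<c_{1}\le c_{2}<\infty$ with $c_{1}s\le h(s)\le c_{2}s$ on $(0,\delta)$.

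Combining the two bounds, $h(s)\le C(1\wedge s)$ for all $s$ and $h(s)\ge c_{1}(1\wedge s)$ for $s\le\delta$. The upper bound yields $\mathfrak{M}_{L}^{1}(\mathbb{R}^{+})\subseteq\mathcal{D}_{L}(\Phi_{\mu})$, and the lower bound combined with the harmlessness of the tail (and $\rho([\delta,\infty))<\infty$, again because $\rho$ is a L\'{e}vy measure) yields the reverse inclusion, so $\mathcal{D}_{L}(\Phi_{\mu})=\mathfrak{M}_{L}^{1}(\mathbb{R}^{+})$. The exclusion of Dirac masses enters exactly here, to guarantee $c>0$: for $\mu=\delta_{a}$ one has $\int(1\wedge\left\vert x\right\vert ^{2})\Phi_{\mu}(\rho)(\mathrm{d}x)=\int_{0}^{\infty}(1\wedge a^{2}s^{2})\,\rho(\mathrm{d}s)$, finite for every $\rho\in\mathfrak{M}_{L}(\mathbb{R}^{+})$, so the domain would be the whole class.

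For the second assertion it suffices to exhibit one $\mu$ whose associated range misses some L\'{e}vy measure. I would take $\mu=N(0,1)$, so that $\mu^{s}=N(0,s)$ has a Lebesgue density $p_{s}$; then Tonelli gives $\Phi_{\mu}(\rho)(\mathrm{d}x)=\big(\int_{0}^{\infty}p_{s}(x)\,\rho(\mathrm{d}s)\big)\mathrm{d}x$, so every element of $\mathcal{R}_{L}(\Phi_{\mu})$ is absolutely continuous, whereas $\delta_{1}\in\mathfrak{M}_{L}(\mathbb{R})$ is not; hence $\delta_{1}\in\mathfrak{M}_{L}(\mathbb{R})\setminus\mathcal{R}_{L}(\Phi_{\mu})$. (Equivalently one could take $\mu$ supported on $[0,\infty)$, which forces $\mathcal{R}_{L}(\Phi_{\mu})\subseteq\mathfrak{M}_{L}(\mathbb{R}^{+})$, or $\mu$ symmetric, which forces the image to be symmetric.)

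The only delicate point is the behaviour of $h$ near $0$, where one must patch the asymptotic $h(s)\sim cs$ from the Lemma together with the crude bound $h\le 1$ to squeeze $\int_{0}^{\infty}h\,\mathrm{d}\rho$ between constant multiples of $\int_{0}^{\infty}(1\wedge s)\,\mathrm{d}\rho$; the rest is routine bookkeeping with Tonelli and with the finiteness of L\'{e}vy measures away from the origin, and I anticipate no substantial obstacle.
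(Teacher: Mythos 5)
Your proof is correct and takes essentially the same route as the paper: part 1 rests on the preceding lemma's asymptotic $\int_{\mathbb{R}}(1\wedge|x|^{2})\,\mu^{s}(dx)\sim cs$ to sandwich the contribution near $s=0$ between constant multiples of $\int_{0}^{s_{0}}s\,\rho(ds)$, and part 2 uses exactly the paper's observation that a density for $\mu$ forces a density for $\Phi_{\mu}(\rho)$, hence the range misses any non--absolutely-continuous L\'evy measure. Your additional details (the bound $h\leq 1$ handling the tail, and the Dirac computation showing why degenerate $\mu$ must be excluded) merely spell out what the paper leaves implicit or defers to the remark following the proposition.
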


\begin{proof}
$1.$ Assume that $\mu $ is not a Dirac delta measure. Let $\overline{\nu }%
=\Phi _{\mu }\left( \rho \right) $ for some $\rho \in \mathfrak{M}_{L}\left( 
\mathbb{R}^{+}\right) .$ From the previous lemma, there is $s_{0}>0$ and
constants $C_{1},C_{2}>0$ such that 
\begin{equation*}
C_{1}\int_{0}^{s_{0}}s\rho \left( ds\right) \leq \int_{0}^{s_{0}}\int_{%
\mathbb{R}}\left( 1\wedge \left\vert x\right\vert ^{2}\right) \mu ^{s}\left(
dx\right) \rho \left( ds\right) \leq C_{2}\int_{0}^{s_{0}}s\rho \left(
ds\right) .
\end{equation*}%
Therefore, $\int_{\mathbb{R}}\left( 1\wedge \left\vert x\right\vert
^{2}\right) \overline{\nu }\left( dx\right) <\infty $ if and only if $%
\int_{0}^{\infty }\left( 1\wedge s\right) \rho \left( ds\right) <\infty ,$
as required.

$2.$ This follows by observing that if $\mu $ has a density, then $\Phi
_{\mu }\left( \rho \right) $ has a density as well for any $\rho \in 
\mathcal{D}_{L}\left( \Phi _{\mu }\right) $.
\end{proof}

\begin{remark}
Observe that when $\mu \left( dx\right) =\delta _{\left\{ \gamma \right\}
}\left( dx\right) $ for some $\gamma \in \mathbb{R}$, then 
\begin{equation*}
\Phi _{\mu }\left( \rho \right) \left( dx\right) =\left\{ 
\begin{array}{cc}
\rho \left( \gamma ^{-1}dx\right) & \text{if }\gamma \neq 0; \\ 
0 & \text{otherwise.}%
\end{array}%
\right.
\end{equation*}%
This means that the properties of $\Phi _{\mu }$ are the properties of $%
\mathfrak{M}_{L}\left( \mathbb{R}^{+}\right) .$ Hence, for the rest of the
paper we will only consider non-degenerated infinitely divisible
distributions.
\end{remark}

\subsection{The mapping $\Phi _{\protect\mu }$ restricted to $\mathfrak{M}%
_{L}^{0}\left( \mathbb{R}^{+}\right) $}

In this part we will show that for a given $\mu \in ID\left( \mathbb{R}%
\right) $, the mapping $\Phi _{\mu }$ restricted to $\mathfrak{M}%
_{L}^{0}\left( \mathbb{R}^{+}\right) $ is one-to-one.

Before we present the main result let us show some properties of the mapping 
$\Phi _{\mu }$ restricted to $\mathfrak{M}_{L}^{0}\left( \mathbb{R}%
^{+}\right) $, which we denote by $\Phi _{\mu }^{0}$.

\begin{proposition}
\label{poplaplace}Let $\mu \in ID\left( \mathbb{R}\right) $ with
characteristic exponent $\phi $. Then $\Phi _{\mu }^{0}$ has the following
properties:

\begin{enumerate}
\item We have that $\Phi _{\mu }^{0}\left( \mathfrak{M}_{L}^{0}\left( 
\mathbb{R}^{+}\right) \right) \subsetneq \mathfrak{M}_{L}^{0}\left( \mathbb{R%
}^{+}\right) $.

\item If $\nu =\Phi _{\mu }^{0}\left( \rho \right) $ for some $\rho \in 
\mathfrak{M}_{L}^{0}\left( \mathbb{R}^{+}\right) $, then any continuous and
bounded complex-valued function is $\nu $-integrable and%
\begin{equation}
\int_{\mathbb{R}}e^{i\theta x}\nu \left( dx\right) =\int_{0}^{\infty
}e^{s\phi \left( \theta \right) }\rho \left( ds\right) \text{, \ \ }\theta
\in \mathbb{R}\text{.}  \label{eqn2.7}
\end{equation}
\end{enumerate}
\end{proposition}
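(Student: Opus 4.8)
The plan is to establish the two parts in turn, with the integrability assertion of part~2 falling out of part~1. For part~1, the inclusion is a one-line finiteness estimate: given $\rho\in\mathfrak{M}_L^0(\mathbb{R}^+)$, one has $\Phi_\mu^0(\rho)(\{0\})=0$ by definition and
\[
\Phi_\mu^0(\rho)(\mathbb{R}\setminus\{0\})=\int_0^\infty\mu^s(\mathbb{R}\setminus\{0\})\,\rho(ds)\le\rho((0,\infty))<\infty,
\]
so $\Phi_\mu^0(\rho)$ is a finite measure vanishing at the origin, hence automatically a finite L\'{e}vy measure since $\int_{\mathbb{R}}(1\wedge|x|^2)\,\Phi_\mu^0(\rho)(dx)\le\Phi_\mu^0(\rho)(\mathbb{R})<\infty$. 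For the strictness I would produce a one-point measure outside the range: fix $a>0$, $c>0$ and suppose $c\delta_a=\Phi_\mu^0(\rho)$. Then $\rho\neq0$, and testing against $\mathbb{R}\setminus\{0,a\}$ gives
\[
0=(c\delta_a)(\mathbb{R}\setminus\{0,a\})=\int_0^\infty\mu^s(\mathbb{R}\setminus\{0,a\})\,\rho(ds),
\]
so $\mu^s$ is concentrated on $\{0,a\}$ for $\rho$-almost every $s>0$. Fixing one such $s$, the law $\mu^s$ is infinitely divisible with bounded support and is therefore a Dirac mass; but $\mu^s=\delta_c$ forces $s\phi(\theta)=ic\theta$ (by continuity of $\phi$ together with $\phi(0)=0$), which would make $\mu$ degenerate, contrary to the standing assumption. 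When $\mu$ has a density the strictness is of course also immediate from the argument already used in the proof of Proposition~\ref{popdomrange}.

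For part~2, since $\nu=\Phi_\mu^0(\rho)$ is a finite measure by part~1, every bounded measurable function --- in particular every bounded continuous complex-valued one --- is $\nu$-integrable. To get (\ref{eqn2.7}) I would invoke Tonelli's theorem (legitimate because $|e^{i\theta x}|\le1$ and $\int_0^\infty\!\int_{\mathbb{R}}1\,\mu^s(dx)\,\rho(ds)=\rho((0,\infty))<\infty$) to interchange the integrations,
\[
\int_{\mathbb{R}}e^{i\theta x}\,\nu(dx)=\int_{\mathbb{R}\setminus\{0\}}e^{i\theta x}\,\nu(dx)=\int_0^\infty\left(\int_{\mathbb{R}\setminus\{0\}}e^{i\theta x}\,\mu^s(dx)\right)\rho(ds),
\]
and then use that $\int_{\mathbb{R}}e^{i\theta x}\,\mu^s(dx)=\widehat{\mu^s}(\theta)=e^{s\phi(\theta)}$, $\phi$ being the characteristic exponent of $\mu$.

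The step that needs care --- and which I regard as the only real obstacle --- is this last identification. Since $\nu$ carries no mass at $0$ by construction while $\mu^s$ may charge $\{0\}$, one in fact has $\int_{\mathbb{R}\setminus\{0\}}e^{i\theta x}\,\mu^s(dx)=e^{s\phi(\theta)}-\mu^s(\{0\})$, whence
\[
\int_{\mathbb{R}}e^{i\theta x}\,\nu(dx)=\int_0^\infty e^{s\phi(\theta)}\,\rho(ds)-\int_0^\infty\mu^s(\{0\})\,\rho(ds),
\]
so that (\ref{eqn2.7}) holds precisely when the correction term vanishes. Now $\mu^s(\{0\})=0$ for all $s>0$ whenever $\mu$ has a non-zero Gaussian component or an infinite L\'{e}vy measure, so (\ref{eqn2.7}) is valid for all such $\mu$; the only exceptional case is that of a compound Poisson $\mu$ (with the drift arranged so that $0$ is an atom of $\mu^s$), which I would treat separately, either by excluding it or by correcting the right-hand side of (\ref{eqn2.7}) accordingly. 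Apart from this point, the remaining verifications are routine.
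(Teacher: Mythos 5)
Your argument is correct and, for the inclusion in part 1 and the Fubini computation in part 2, coincides with the paper's own proof. It differs in two substantive ways. For the strictness in part 1 the paper merely cites Proposition \ref{popdomrange}, whose proof covers only the case where $\mu$ admits a density; your argument --- that a point mass $c\delta_a$, $a>0$, cannot lie in the range because it would force $\mu^s$ to be infinitely divisible with bounded support, hence a Dirac mass, hence $\mu$ degenerate, contradicting the standing non-degeneracy assumption --- closes this for arbitrary non-degenerate $\mu$ and is the more complete justification. More importantly, the atom-at-zero issue you raise is genuine and the paper overlooks it: since $\nu=\Phi_\mu^0(\rho)$ charges only $\mathbb{R}\setminus\{0\}$, Fubini gives
\[
\int_{\mathbb{R}}e^{i\theta x}\,\nu(dx)=\int_0^\infty e^{s\phi(\theta)}\,\rho(ds)-\int_0^\infty\mu^s\left(\{0\}\right)\rho(ds),
\]
and the correction term is nonzero for, say, $\mu$ a standard Poisson law and $\rho=\delta_1$, where the left-hand side equals $e^{e^{i\theta}-1}-e^{-1}$ while the right-hand side of (\ref{eqn2.7}) equals $e^{e^{i\theta}-1}$. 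The paper's proof simply asserts that (\ref{eqn2.7}) ``follows easily'' from $\widehat{\mu^s}=e^{s\phi}$ and never confronts this; the identity as stated is valid only under the additional hypothesis that $\mu^s(\{0\})=0$ for $\rho$-almost every $s$ (which holds unless $\mu$ is of compound-Poisson type), exactly as you observe. Since the spurious term is constant in $\theta$, the downstream injectivity argument of Theorem \ref{Thmcompoundpoisson} can be repaired by letting the Laplace variable tend to $-\infty$ to identify and cancel the constant, but your rendering of part 2 is the accurate one.
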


\begin{proof}
$1$. Since for any $\rho \in \mathfrak{M}_{L}^{0}\left( \mathbb{R}%
^{+}\right) ,$ $\Phi _{\mu }^{0}\left( \rho \right) \left( \mathbb{R}\right)
=\rho \left( \mathbb{R}^{+}\right) <\infty ,$ we get that $\Phi _{\mu
}^{0}\left( \mathfrak{M}_{L}^{0}\left( \mathbb{R}^{+}\right) \right) \subset 
\mathfrak{M}_{L}^{0}\left( \mathbb{R}^{+}\right) $. The strictly inclusion
follows from Proposition \ref{popdomrange}.

$2.$ Let $\nu =\Phi _{\mu }^{0}\left( \rho \right) $ for some $\rho \in 
\mathfrak{M}_{L}^{0}\left( \mathbb{R}^{+}\right) .$\ From the previous point 
$\Phi _{\mu }^{0}\left( \rho \right) \in \mathfrak{M}_{L}^{0}\left( \mathbb{R%
}^{+}\right) ,$ thus for any real-valued continuous and bounded function $f$
we have that 
\begin{equation*}
\int_{\mathbb{R}}\left\vert f\left( x\right) \right\vert \nu \left(
dx\right) =\int_{0}^{\infty }\int_{\mathbb{R}}\left\vert f\left( x\right)
\right\vert \mu ^{s}\left( dx\right) \rho \left( ds\right) <\infty .
\end{equation*}%
If $f$ is complex-valued, continuous and bounded, its real part and its
imaginary part are continuous and bounded as well, so integrable. Finally,
since $\mu ^{s}$ is the probability measure with characteristic function $%
e^{s\phi \left( \theta \right) }$, (\ref{eqn2.7}) follows easily.
\end{proof}

\begin{remark}
Observe that equation (\ref{eqn2.7}) only holds when $\rho \in \mathfrak{M}%
_{L}^{0}\left( \mathbb{R}^{+}\right) $. Indeed, put $\rho \left( dx\right)
=x^{-\alpha -1}dx$ for $x>0$ and $0<\alpha <1$. Then $\rho \in \mathfrak{M}%
_{L}^{1}\left( \mathbb{R}^{+}\right) $ but $\rho \notin \mathfrak{M}%
_{L}^{0}\left( \mathbb{R}^{+}\right) $ and for any $z\in \mathbb{C}$ with $%
\text {Re}z\leq 0$, $\int_{0}^{\infty }e^{zs}\rho \left( ds\right) $ does
not exist.
\end{remark}

Next, we present the one-to-one property of $\Phi _{\mu }^{0}.$ Recall that $%
x_{0}\in A\subset \mathbb{C}$ is a condensation point if for every $%
\varepsilon >0,$ $B_{\varepsilon }\left( x_{0}\right) \cap A\backslash
\left\{ x_{0}\right\} \neq \emptyset $, where $B_{\varepsilon }\left(
x_{0}\right) $ is the open ball in $\mathbb{C}$ with radius $\varepsilon $
and center $x_{0}$.

\begin{theorem}
\label{Thmcompoundpoisson}Let $\mu \in ID\left( \mathbb{R}\right) $ with
characteristic triplet $\left( \gamma ,b,\nu \right) $. Then, the mapping $%
\Phi _{\mu }^{0}$ is one-to-one on its own domain.
\end{theorem}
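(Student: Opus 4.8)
The plan is to exploit the identity \eqref{eqn2.7}: if $\nu = \Phi_\mu^0(\rho)$ for some $\rho \in \mathfrak{M}_L^0(\mathbb{R}^+)$, then the finite measure $\nu$ is determined by the function $\theta \mapsto \int_0^\infty e^{s\phi(\theta)}\rho(ds)$, where $\phi$ is the characteristic exponent of $\mu$. Since $\nu$ is a finite measure on $\mathbb{R}$, it is determined by its characteristic function, so to prove injectivity it suffices to show that $\rho$ is determined by the map $\theta \mapsto \int_0^\infty e^{s\phi(\theta)}\rho(ds)$. Writing $\ell$ for the image measure of $\rho$ under the map $s \mapsto \phi(\theta)$ is not quite the right move since $\theta$ varies; instead, the key observation is that the finite measure $\rho$ on $(0,\infty)$ is determined by its Laplace transform $\lambda \mapsto \int_0^\infty e^{-\lambda s}\rho(ds)$ on any set of $\lambda$'s having a condensation point in a region where the transform is analytic — in particular, on the negative reals it is even determined by its values on, say, $[1,\infty)$, but more robustly it is determined once we know $\int_0^\infty e^{zs}\rho(ds)$ for all $z$ in a set with a condensation point in $\{\mathrm{Re}\,z < 0\}$, by analytic continuation and the injectivity of the Laplace transform of a finite measure.

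The crux, therefore, is to show that the range of $\phi$, i.e.\ the set $\{\phi(\theta) : \theta \in \mathbb{R}\} \subset \mathbb{C}$, contains a set with a condensation point inside $\{\mathrm{Re}\,z < 0\}$ (or more precisely, that there is enough of the range of $\phi$ available to pin down $\rho$). Here I would use the assumption that $\mu$ is non-degenerate. Recall $\phi(\theta) = i\theta\gamma - \tfrac12 b\theta^2 + \int_{\mathbb{R}}[e^{i\theta x} - 1 - i\tau(x)\theta]\,\nu_\mu(dx)$, and that $\mathrm{Re}\,\phi(\theta) = -\tfrac12 b\theta^2 + \int_{\mathbb{R}}(\cos(\theta x) - 1)\,\nu_\mu(dx) \le 0$, with equality for all $\theta$ only in the degenerate case. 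So $\mathrm{Re}\,\phi(\theta) < 0$ for $\theta$ in some set, and since $\phi$ is continuous (indeed real-analytic), its image contains a continuum, hence a set with a condensation point, lying in $\{\mathrm{Re}\,z < 0\}$. The argument then runs: suppose $\Phi_\mu^0(\rho_1) = \Phi_\mu^0(\rho_2)$ with $\rho_1,\rho_2 \in \mathfrak{M}_L^0(\mathbb{R}^+)$. By \eqref{eqn2.7}, $\int_0^\infty e^{s\phi(\theta)}\rho_1(ds) = \int_0^\infty e^{s\phi(\theta)}\rho_2(ds)$ for all $\theta \in \mathbb{R}$. Both sides, as functions of a complex variable $z$ on $\{\mathrm{Re}\,z \le 0\}$, are analytic on $\{\mathrm{Re}\,z < 0\}$ (differentiation under the integral sign, using finiteness of $\rho_i$) and continuous up to the boundary; they agree on the range of $\phi$, which has a condensation point in $\{\mathrm{Re}\,z < 0\}$, so by the identity theorem they agree on all of $\{\mathrm{Re}\,z < 0\}$, in particular on $(-\infty,0)$. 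Injectivity of the Laplace transform of finite measures then gives $\rho_1 = \rho_2$.

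The main obstacle is the step establishing that the range of $\phi$ genuinely reaches into $\{\mathrm{Re}\,z<0\}$ and does so on a set with a condensation point there — this is precisely where non-degeneracy of $\mu$ is essential (in the degenerate case $\phi$ is purely imaginary and the argument collapses, consistent with the earlier remark). One must handle the purely-real-part analysis of $\phi$ carefully: showing $\mathrm{Re}\,\phi$ is not identically zero amounts to checking that a non-degenerate ID law has either $b>0$ or $\nu_\mu \neq 0$, which is immediate, and then continuity of $\theta \mapsto \phi(\theta)$ upgrades a single point with $\mathrm{Re}\,\phi(\theta_0) < 0$ to an open arc of values, furnishing the condensation point. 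A secondary technical point is justifying analyticity of $z \mapsto \int_0^\infty e^{zs}\rho_i(ds)$ on the open left half-plane: since $\rho_i$ is a \emph{finite} measure on $(0,\infty)$ and $|e^{zs}| = e^{s\,\mathrm{Re}\,z} \le 1$ there, dominated convergence and Morera's theorem (or Fubini together with Cauchy's theorem on triangles) give analyticity, with continuity on the closed half-plane by dominated convergence as well.
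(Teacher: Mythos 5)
Your proposal is correct and follows essentially the same route as the paper's proof: both reduce via equation \eqref{eqn2.7} to comparing Laplace transforms of the finite measures on the set $\{\phi(\theta):\theta\in\mathbb{R}\}$, both use non-degeneracy of $\mu$ together with $\mathrm{Re}\,\phi(\theta)=-\tfrac12 b\theta^2-\int[1-\cos(\theta x)]\,\nu(dx)\le 0$ to produce a point $\theta_0$ with $\mathrm{Re}\,\phi(\theta_0)<0$, and both upgrade this by continuity of $\phi$ to a condensation point in the open left half-plane where the Laplace transforms are analytic, concluding by uniqueness. The only differences are cosmetic: you spell out the analyticity and identity-theorem details that the paper delegates to a textbook citation, while the paper additionally normalizes $\rho$ and $\widetilde{\rho}$ to probability measures first.
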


\begin{proof}
Let $\rho ,\widetilde{\rho }\in \mathfrak{M}_{L}^{0}\left( \mathbb{R}%
^{+}\right) $ such that $\Phi _{\mu }^{0}\left( \rho \right) =\Phi _{\mu
}^{0}\left( \widetilde{\rho }\right) $. From the proof of Proposition \ref%
{poplaplace}, $\rho $ and $\widetilde{\rho }$ are finite measures with the
same mass, so without loss of generality we may and do assume that they are
probability measures. By equation (\ref{eqn2.7})%
\begin{equation*}
\int_{0}^{\infty }e^{sz}\rho \left( ds\right) =\int_{0}^{\infty }e^{sz}%
\widetilde{\rho }\left( ds\right) \text{, \ \ }\forall \text{ }z\in \mathcal{%
A}\text{,}
\end{equation*}%
where%
\begin{equation}
\mathcal{A}:=\left\{ \phi \left( \theta \right) :\theta \in \mathbb{R}%
\right\} .  \label{eqn2.8}
\end{equation}%
We claim that there is $\theta _{0}\in \mathbb{R}$ for which $\text{Re}\phi
\left( \theta _{0}\right) <0$. If this were true, due to the continuity of $%
\phi $, $\phi \left( \theta _{0}\right) $ would be a condensation point of $%
\mathcal{A}$ having $\text{Re}\phi \left( \theta _{0}\right) <0$ and since
the set $\left( -\infty ,0\right) \times i\mathbb{R}$ is contained in the
interior of the domain of the Laplace transforms of $\rho $ and $\widetilde{%
\rho }$, this would imply that $\rho $ and $\widetilde{\rho }$ coincides
(see Chapter 4 in \cite{hoffmanprob}), finishing thus the proof. Hence, we
only need to show that such a $\theta _{0}$ exists. Suppose the opposite,
this is $\text{Re}\phi \geq 0$. Since for any $\theta \in \mathbb{R}$%
\begin{equation*}
\text{Re}\phi \left( \theta \right) =-\frac{1}{2}b\theta ^{2}-\int_{\mathbb{R%
}\backslash \left\{ 0\right\} }\left[ 1-\cos \left( \theta x\right) \right]
\nu \left( dx\right) \leq 0,
\end{equation*}%
we conclude that $\text{Re}\phi =0$ which is equivalent to saying that $b=0$
and 
\begin{equation*}
\int_{\mathbb{R}\backslash \left\{ 0\right\} }\left[ 1-\cos \left( \theta
x\right) \right] \nu \left( dx\right) =0\text{, \ \ }\forall \text{\ }\theta
\in \mathbb{R}\text{.}
\end{equation*}%
Since $1-\cos \left( \theta x\right) \geq 0,$ from the previous equation we
deduce that $\cos \left( \theta \cdot \right) \equiv 1$ $\nu $-a.e. and for
all $\theta \in \mathbb{R}$, which is impossible. Thus, such a $\theta _{0}$
must exists.
\end{proof}

\begin{corollary}[Probability measures]
\label{injectprob}Let $\mathcal{P}\left( \mathbb{R}^{+}\right) $ be the
space of probability measures on $\mathbb{R}^{+}$. Then $\Phi _{\mu }$
restricted to $\mathcal{P}\left( \mathbb{R}^{+}\right) $ is one-to-one.
\end{corollary}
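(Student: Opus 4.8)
The plan is to read the statement off directly from Theorem \ref{Thmcompoundpoisson}, using that $\mathcal{P}(\mathbb{R}^+)$ is contained in the L\'evy-domain of $\Phi_\mu^0$ and that the restriction of a one-to-one map is one-to-one.

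First I would note that every $\rho\in\mathcal{P}(\mathbb{R}^+)$ is a \emph{finite} measure on the positive half-line carrying no mass at the origin, so $\rho\in\mathfrak{M}_L^0(\mathbb{R}^+)$; moreover $\Phi_\mu(\rho)(\mathbb{R}\setminus\{0\})=\int_0^\infty\mu^s(\mathbb{R}\setminus\{0\})\,\rho(ds)\le\rho(\mathbb{R}^+)<\infty$ and $\Phi_\mu(\rho)(\{0\})=0$, hence $\Phi_\mu(\rho)\in\mathfrak{M}_L^0(\mathbb{R})\subset\mathfrak{M}_L(\mathbb{R})$. Thus $\rho\in\mathcal{D}_L(\Phi_\mu)\cap\mathfrak{M}_L^0(\mathbb{R}^+)$, which is exactly the domain of $\Phi_\mu^0$ appearing in Theorem \ref{Thmcompoundpoisson}, and on $\mathcal{P}(\mathbb{R}^+)$ the maps $\Phi_\mu$ and $\Phi_\mu^0$ coincide. (Alternatively, one may quote part $1$ of Proposition \ref{poplaplace} for the inclusion $\mathcal{P}(\mathbb{R}^+)\subset\mathcal{D}_L(\Phi_\mu^0)$.)

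With this in hand I would conclude: if $\rho,\widetilde\rho\in\mathcal{P}(\mathbb{R}^+)$ satisfy $\Phi_\mu(\rho)=\Phi_\mu(\widetilde\rho)$, then $\Phi_\mu^0(\rho)=\Phi_\mu^0(\widetilde\rho)$, and Theorem \ref{Thmcompoundpoisson} yields $\rho=\widetilde\rho$. There is essentially no obstacle here: all the analytic content---the Laplace-transform and condensation-point argument producing a frequency $\theta_0$ with $\text{Re}\,\phi(\theta_0)<0$---was already settled in the proof of Theorem \ref{Thmcompoundpoisson}, and what remains is only the bookkeeping that probability measures on $\mathbb{R}^+$ are finite L\'evy measures, so that the corollary is a genuine special case. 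In view of \eqref{eqn2.0}, the statement says precisely that the law of a L\'evy process sampled at an independent random time determines the distribution of that time.
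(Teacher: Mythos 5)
Your argument is correct and is exactly the route the paper intends: the corollary is stated without proof precisely because $\mathcal{P}\left( \mathbb{R}^{+}\right) \subset \mathfrak{M}_{L}^{0}\left( \mathbb{R}^{+}\right)$, so the injectivity is an immediate special case of Theorem \ref{Thmcompoundpoisson}. Your bookkeeping that $\Phi _{\mu }$ and $\Phi _{\mu }^{0}$ agree on probability measures, and the remark linking the statement to \eqref{eqn2.0}, match the paper's reading of the result.
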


\subsection{The symmetric case}

The situation of $\mu $ being symmetric is of great interest, not only for
its tractability but also because in this case $\Phi _{\mu }$ is injective
in its whole domain.

The following theorem is the main result of this subsection.

\begin{theorem}
\label{Thmalphastlm}Suppose that $\mu \in ID\left( \mathbb{R}\right) $ is
symmetric. Then $\Phi _{\mu }$ is one-to-one on its own domain.
\end{theorem}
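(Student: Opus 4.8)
The plan is to reduce the injectivity of $\Phi_\mu$ on its full domain $\mathfrak{M}_L^1(\mathbb{R}^+)$ to the already-established injectivity of $\Phi_\mu^0$ on $\mathfrak{M}_L^0(\mathbb{R}^+)$ (Theorem \ref{Thmcompoundpoisson}). Suppose $\rho,\widetilde\rho\in\mathfrak{M}_L^1(\mathbb{R}^+)$ satisfy $\Phi_\mu(\rho)=\Phi_\mu(\widetilde\rho)$. Since $\mu$ is symmetric, its characteristic exponent satisfies $\phi(\theta)=-\tfrac12 b\theta^2-\int_{\mathbb{R}}[1-\cos(\theta x)]\nu(dx)\le 0$ and is \emph{real}, and moreover $\phi(\theta)\to-\infty$ or at least attains strictly negative values (the degenerate case is excluded). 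The first step is to show that for every $t>0$ the measure $\mu^t$ is again symmetric with characteristic function $e^{t\phi(\theta)}\in(0,1]$; in particular $\widehat{\mu^t}$ is real and nonnegative. This is what makes the symmetric case work: the tail-truncation weight $1\wedge x^2$ that obstructs naive Laplace-transform arguments can be handled because the relevant transforms are genuinely monotone/real.

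The key step is to manufacture, out of the identity $\Phi_\mu(\rho)=\Phi_\mu(\widetilde\rho)$, an identity of Laplace transforms of \emph{finite} measures on $\mathbb{R}^+$ to which Theorem \ref{Thmcompoundpoisson} (or rather its proof, via analyticity on $(-\infty,0)\times i\mathbb{R}$) applies. Concretely, I would fix a truncation level and split: for $\theta\in\mathbb{R}$,
\begin{equation*}
\int_{\mathbb{R}}(1-\cos(\theta x))\,\Phi_\mu(\rho)(dx)=\int_0^\infty\Big(\int_{\mathbb{R}}(1-\cos(\theta x))\,\mu^s(dx)\Big)\rho(ds)=\int_0^\infty\big(1-e^{s\phi(\theta)}\big)\rho(ds),
\end{equation*}
where the inner integral was computed using symmetry and Fubini, and $\int_{\mathbb{R}}(1-\cos(\theta x))\,\mu^s(dx)<\infty$ because $1-\cos(\theta x)\le \min(2,\theta^2x^2/2)\le C_\theta(1\wedge x^2)$, so the double integral is finite precisely because $\rho\in\mathcal{D}_L(\Phi_\mu)=\mathfrak{M}_L^1(\mathbb{R}^+)$ and by the Lemma $\int_{\mathbb{R}}(1\wedge x^2)\mu^s(dx)\sim cs$ near $0$. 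Hence the left-hand sides being equal forces
\begin{equation*}
\int_0^\infty\big(1-e^{s\phi(\theta)}\big)\rho(ds)=\int_0^\infty\big(1-e^{s\phi(\theta)}\big)\widetilde\rho(ds),\qquad\forall\,\theta\in\mathbb{R}.
\end{equation*}
Writing $u=-\phi(\theta)\ge 0$, this says $\int_0^\infty(1-e^{-us})\rho(ds)=\int_0^\infty(1-e^{-us})\widetilde\rho(ds)$ for all $u$ in the range of $-\phi$. As in the proof of Theorem \ref{Thmcompoundpoisson}, I claim $-\phi$ is not identically zero (non-degeneracy), and by continuity the set $\{-\phi(\theta):\theta\in\mathbb{R}\}$ contains an interval $(0,\varepsilon)$ or at least a set with a condensation point in $(0,\infty)$. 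The functions $u\mapsto\int_0^\infty(1-e^{-us})\rho(ds)$ and its tilde-analogue are finite (again since $\rho,\widetilde\rho\in\mathfrak{M}_L^1$) and extend to analytic functions on $\{\mathrm{Re}\,u>0\}$; agreeing on a set with a condensation point, they agree on all of $\mathrm{Re}\,u>0$. Differentiating in $u$ gives $\int_0^\infty s e^{-us}\rho(ds)=\int_0^\infty s e^{-us}\widetilde\rho(ds)$ for $u>0$, i.e.\ the finite measures $s\,\rho(ds)$ and $s\,\widetilde\rho(ds)$ have equal Laplace transforms, hence are equal; dividing by $s$ (valid on $(0,\infty)$, and both measures charge neither $\{0\}$ nor $\infty$) yields $\rho=\widetilde\rho$.

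The main obstacle I anticipate is the justification of the analytic-continuation / condensation-point argument when $-\phi$ does not surject onto a full interval — e.g.\ if $\mu$ is such that $\phi$ is bounded (a compound-Poisson-type symmetric law), the range of $-\phi$ is a bounded set, and one must argue carefully that it still has a condensation point in $(0,\infty)$ and that $(0,\varepsilon)\times i\mathbb{R}$ lies in the interior of the domain of analyticity of the relevant transforms. This is exactly the point where one should quote the proof of Theorem \ref{Thmcompoundpoisson}: the existence of $\theta_0$ with $\mathrm{Re}\,\phi(\theta_0)=\phi(\theta_0)<0$ was established there for any non-degenerate $\mu$, and continuity of $\phi$ then supplies the condensation point. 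A secondary technical point is the Fubini interchange and the finiteness claims above; these are routine given the Lemma and the characterization $\mathcal{D}_L(\Phi_\mu)=\mathfrak{M}_L^1(\mathbb{R}^+)$ from Proposition \ref{popdomrange}, so I would state them and move on rather than belabor the estimates.
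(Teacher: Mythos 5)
Your proof is correct, but it follows a genuinely different route from the paper's. The paper proves Proposition \ref{keyprop}: it realizes $\rho$ and $\widetilde{\rho}$ as the L\'evy measures of driftless subordinators $T^{\rho}$, $T^{\widetilde{\rho}}$, uses the subordination formula of Theorem \ref{TheoremcqsubLB} to see that the laws $\mathcal{L}(L_{T_1^{\rho}})$ and $\mathcal{L}(L_{T_1^{\widetilde{\rho}}})$ have identical triplets (symmetry enters only to kill the drift term $\int_0^\infty\int_{|x|\le 1}x\,\mu^s(dx)\,\rho(ds)$), and then invokes Corollary \ref{injectprob} (injectivity on probability measures) to get $\mathcal{L}(T_1^{\rho})=\mathcal{L}(T_1^{\widetilde{\rho}})$ and hence $\rho=\widetilde{\rho}$. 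You instead stay entirely at the level of L\'evy measures: you pair $\Phi_\mu(\rho)$ with the test functions $1-\cos(\theta x)$, use symmetry to evaluate $\int(1-\cos(\theta x))\mu^s(dx)=1-e^{s\phi(\theta)}$, and then run the Laplace/analytic-continuation argument on $u\mapsto\int_0^\infty(1-e^{-us})\rho(ds)$, which is finite and analytic on $\{\mathrm{Re}\,u>0\}$ precisely because $\rho\in\mathfrak{M}_L^1(\mathbb{R}^+)$. Your worry about the condensation point is unfounded: since $\phi$ is real and continuous with $\phi(0)=0$ and $\phi(\theta_0)<0$ for some $\theta_0$ (the non-degeneracy argument from Theorem \ref{Thmcompoundpoisson}), the intermediate value theorem gives a whole interval $(0,-\phi(\theta_0))$ in the range of $-\phi$, so the identity theorem applies directly. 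The trade-off: the paper's reduction is modular --- Proposition \ref{keyprop} is reused almost verbatim for $\mu$ supported on $(0,\infty)$ (Proposition \ref{keyprop2}), where a different argument handles the drift --- whereas your argument is self-contained, avoids Theorem \ref{TheoremcqsubLB} entirely, and makes explicit exactly where symmetry is used, at the cost of essentially re-proving the analyticity step inside Theorem \ref{Thmcompoundpoisson} rather than quoting its conclusion. One small correction: $s\,\rho(ds)$ need not be a \emph{finite} measure (only its restriction to $(0,1]$ is, since $\int_1^\infty s\,\rho(ds)$ may diverge); but its Laplace transform is finite for every $u>0$, so after tilting by $e^{-u_0 s}$ for a fixed $u_0>0$ the uniqueness theorem for finite measures still yields $s\,\rho(ds)=s\,\widetilde{\rho}(ds)$ and hence $\rho=\widetilde{\rho}$.
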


This result is basically a consequence of Corollary \ref{injectprob} and the
following proposition:

\begin{proposition}
\label{keyprop}Assume that $\mu \in ID\left( \mathbb{R}\right) $ is
symmetric and let $\rho ,\widetilde{\rho }\in \mathfrak{M}_{L}^{1}\left( 
\mathbb{R}^{+}\right) $. Then $\Phi _{\mu }\left( \rho \right) =\Phi _{\mu
}\left( \widetilde{\rho }\right) $ if and only if there exist (not
necessarily unique) $\eta _{\rho },\eta _{\widetilde{\rho }}\in ID\left( 
\mathbb{R}^{+}\right) $ having L\'{e}vy measures $\rho $ and $\widetilde{%
\rho },$ respectively, such that $\Phi _{\mu }\left( \eta _{\rho }\right)
=\Phi _{\mu }\left( \eta _{\widetilde{\rho }}\right) $.
\end{proposition}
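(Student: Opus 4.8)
The plan is to reduce the identity of the two L\'{e}vy-mixed measures to the identity of two subordinated probability laws, exploiting the symmetry of $\mu$ to make the Laplace functional of a driftless subordinator coincide with the $(1-\cos)$-transform of $\Phi_{\mu}(\rho)$. Throughout, write $\phi$ for the characteristic exponent of $\mu$, so that $\widehat{\mu^{s}}(\theta)=e^{s\phi(\theta)}$; since $\mu$ is symmetric and non-degenerate (the latter assumed throughout the paper), $\phi$ is real-valued with $\phi(\theta)\le 0$ for all $\theta$, and every $\mu^{s}$ is symmetric. For $\rho\in\mathfrak{M}_{L}^{1}(\mathbb{R}^{+})=\mathcal{D}_{L}(\Phi_{\mu})$ (Proposition \ref{popdomrange}), let $\eta_{\rho}\in ID(\mathbb{R}^{+})$ be the law with characteristic pair $(0,\rho)_{0}$, i.e.\ the time-one marginal of the driftless subordinator with L\'{e}vy measure $\rho$, which exists because $\int_{0}^{\infty}(1\wedge s)\,\rho(ds)<\infty$; its Laplace transform is $\int_{0}^{\infty}e^{-us}\,\eta_{\rho}(ds)=\exp(-\int_{0}^{\infty}(1-e^{-us})\,\rho(ds))$ for $u\ge 0$, the inner integral being finite there, and recall that $\Phi_{\mu}(\eta_{\rho})(A)=\int_{0}^{\infty}\mu^{s}(A)\,\eta_{\rho}(ds)$.

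The key step is the following identity. Fix $\theta\in\mathbb{R}$. By Fubini's theorem (all measures are probability measures and $|e^{i\theta x}|=1$) and the Laplace formula above, using $-\phi(\theta)\ge 0$,
\[
\widehat{\Phi_{\mu}(\eta_{\rho})}(\theta)=\int_{0}^{\infty}e^{s\phi(\theta)}\,\eta_{\rho}(ds)=\exp\Big(-\int_{0}^{\infty}\big(1-e^{s\phi(\theta)}\big)\,\rho(ds)\Big).
\]
On the other hand, symmetry of $\mu^{s}$ gives $1-e^{s\phi(\theta)}=1-\widehat{\mu^{s}}(\theta)=\int_{\mathbb{R}}(1-\cos\theta x)\,\mu^{s}(dx)\ge 0$, so Tonelli's theorem applies and yields $\int_{0}^{\infty}(1-e^{s\phi(\theta)})\,\rho(ds)=\int_{\mathbb{R}}(1-\cos\theta x)\,\Phi_{\mu}(\rho)(dx)$, which is finite since $\Phi_{\mu}(\rho)\in\mathfrak{M}_{L}(\mathbb{R})$. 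Combining,
\[
\widehat{\Phi_{\mu}(\eta_{\rho})}(\theta)=\exp\Big(-\int_{\mathbb{R}}(1-\cos\theta x)\,\Phi_{\mu}(\rho)(dx)\Big),\qquad\theta\in\mathbb{R},
\]
an expression depending on $\rho$ only through $\Phi_{\mu}(\rho)$.

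From here both directions follow quickly. Forward: if $\Phi_{\mu}(\rho)=\Phi_{\mu}(\widetilde{\rho})$, the last display forces $\widehat{\Phi_{\mu}(\eta_{\rho})}=\widehat{\Phi_{\mu}(\eta_{\widetilde{\rho}})}$, hence $\Phi_{\mu}(\eta_{\rho})=\Phi_{\mu}(\eta_{\widetilde{\rho}})$, and $\eta_{\rho},\eta_{\widetilde{\rho}}\in ID(\mathbb{R}^{+})$ are the required laws, having L\'{e}vy measures $\rho$ and $\widetilde{\rho}$. Converse: if there exist $\eta_{\rho},\eta_{\widetilde{\rho}}\in ID(\mathbb{R}^{+})\subset\mathcal{P}(\mathbb{R}^{+})$ with L\'{e}vy measures $\rho,\widetilde{\rho}$ and $\Phi_{\mu}(\eta_{\rho})=\Phi_{\mu}(\eta_{\widetilde{\rho}})$, then Corollary \ref{injectprob} gives $\eta_{\rho}=\eta_{\widetilde{\rho}}$, and uniqueness of the L\'{e}vy-Khintchine triplet then gives $\rho=\widetilde{\rho}$, so that $\Phi_{\mu}(\rho)=\Phi_{\mu}(\widetilde{\rho})$ trivially. (If one wishes to avoid Corollary \ref{injectprob} in the converse, one can instead write the key identity with a possibly non-zero drift, compare the resulting Laplace exponents over the range of $-\phi$, extend the equality to all of $[0,\infty)$ by analytic continuation, and remove the drift discrepancy using the sublinearity $\int_{0}^{\infty}(1-e^{-us})\,\rho(ds)=o(u)$ as $u\to\infty$ of a driftless Bernstein function.)

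The main obstacle is the key step, and this is exactly where the symmetry of $\mu$ is indispensable: only then is $\widehat{\mu^{s}}(\theta)=e^{s\phi(\theta)}$ real and bounded by $1$, so that it equals the value at $u=-\phi(\theta)\ge 0$ of the Laplace transform of $\eta_{\rho}$, and only then is $1-\widehat{\mu^{s}}(\theta)=\int_{\mathbb{R}}(1-\cos\theta x)\,\mu^{s}(dx)$ non-negative, which is what legitimises the Tonelli interchange. The remaining points — the existence of the driftless subordinator with L\'{e}vy measure $\rho$, the finiteness of $\int_{0}^{\infty}(1-e^{-us})\,\rho(ds)$ for $u\ge 0$, and the finiteness of $\int_{\mathbb{R}}(1-\cos\theta x)\,\Phi_{\mu}(\rho)(dx)$ — are routine consequences of $\rho\in\mathfrak{M}_{L}^{1}(\mathbb{R}^{+})$ and $\Phi_{\mu}(\rho)\in\mathfrak{M}_{L}(\mathbb{R})$ (Proposition \ref{popdomrange}).
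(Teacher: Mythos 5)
Your proof is correct, and its skeleton matches the paper's: in both arguments one direction is dispatched by Corollary \ref{injectprob} (equality of the mixed probability laws forces $\eta _{\rho }=\eta _{\widetilde{\rho }}$, hence $\rho =\widetilde{\rho }$ by uniqueness of the triplet), and the substantive direction consists in choosing for $\eta _{\rho }$, $\eta _{\widetilde{\rho }}$ the time-one laws of \emph{driftless} subordinators and showing that $\Phi _{\mu }\left( \eta _{\rho }\right) $ depends on $\rho $ only through $\Phi _{\mu }\left( \rho \right) $. Where you differ is in how that last point is verified. The paper invokes Theorem \ref{TheoremcqsubLB} (equivalently Sato's Theorem 30.1) to read off the characteristic triplet of the subordinated law, notes that its L\'{e}vy measure is exactly $\Phi _{\mu }\left( \rho \right) $, that the Gaussian part vanishes because $\beta _{0}=0$, and that the drift $\int_{0}^{\infty }\int_{\left\vert x\right\vert \leq 1}x\mu ^{s}\left( dx\right) \rho \left( ds\right) $ vanishes by the symmetry of $\mu ^{s}$. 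You instead compute the characteristic function of $\Phi _{\mu }\left( \eta _{\rho }\right) $ directly: Fubini plus the Laplace transform of the driftless subordinator gives $\exp \left( -\int_{0}^{\infty }\left( 1-e^{s\phi \left( \theta \right) }\right) \rho \left( ds\right) \right) $, and symmetry, which makes $1-\widehat{\mu ^{s}}\left( \theta \right) =\int_{\mathbb{R}}\left( 1-\cos \theta x\right) \mu ^{s}\left( dx\right) \geq 0$, lets Tonelli convert the exponent into $\int_{\mathbb{R}}\left( 1-\cos \theta x\right) \Phi _{\mu }\left( \rho \right) \left( dx\right) $. This buys a self-contained, purely analytic verification that does not rely on the subordination triplet theorem, at the cost of re-deriving by hand what that theorem packages; the symmetry hypothesis enters in the two proofs at exactly analogous points (killing the drift term versus making the exponent a $\left( 1-\cos \right) $-functional of $\Phi _{\mu }\left( \rho \right) $ alone). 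Your parenthetical remark on how to avoid Corollary \ref{injectprob} in the converse is only a sketch, but it is not needed for the proof to stand.
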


\begin{proof}
Let us start by assuming that there are two infinitely divisible
distributions $\eta _{\rho },\eta _{\widetilde{\rho }}\in ID\left( \mathbb{R}%
^{+}\right) $ with L\'{e}vy measures $\rho $ and $\widetilde{\rho }$, such
that $\Phi _{\mu }\left( \eta _{\rho }\right) =\Phi _{\mu }\left( \eta _{%
\widetilde{\rho }}\right) $. From Corollary \ref{injectprob}, it follows
immediately that $\eta _{\rho }=\eta _{\widetilde{\rho }}$. Thus, by
uniqueness of the characteristic triplet, it follows that $\rho =\widetilde{%
\rho }$.

Conversely, suppose that $\Phi _{\mu }\left( \rho \right) =\Phi _{\mu
}\left( \widetilde{\rho }\right) $. First note that in general $\Phi _{\mu
}\left( \rho \right) \equiv 0$ if and only if $\rho \equiv 0$, so without
loss of generality, we may and do assume that $\rho ,\widetilde{\rho }\neq 0$%
. Let, $T^{\rho }$ and $T^{\widetilde{\rho }}$ be two purely discontinuous
subordinators with L\'{e}vy measures $\rho $ and $\widetilde{\rho }$,
respectively. Put $\eta _{\rho }$ and $\eta _{\widetilde{\rho }}$ to be the
distributions related to $T^{\rho }$ and $T^{\widetilde{\rho }}$ at time 1$.$
Let $\overline{\mu }=\mathcal{L}\left( L_{T_{1}^{\rho }}\right) $ and $%
\overline{\mu }^{\prime }=\mathcal{L}\left( L_{T_{1}^{\widetilde{\rho }%
}}\right) $, where $L$ is the L\'{e}vy process associated to $\mu $. Since $%
\overline{\mu }=\Phi _{\mu }\left( \eta _{\rho }\right) $ and $\overline{\mu 
}^{\prime }=\Phi _{\mu }\left( \eta _{\widetilde{\rho }}\right) $, then $%
\Phi _{\mu }\left( \eta _{\rho }\right) =\Phi _{\mu }\left( \eta _{%
\widetilde{\rho }}\right) $ if and only if the characteristic triplets of $%
\overline{\mu }$ and $\overline{\mu }^{\prime }$ coincide. From Theorem \ref%
{TheoremcqsubLB} we get that $\overline{\mu }$ and $\overline{\mu }^{\prime
} $ have characteristic triplets $\left( \gamma ,0,\nu \right) $ and $\left(
\gamma ^{\prime },0,\nu ^{\prime }\right) $ given by $\nu =\Phi _{\mu
}\left( \rho \right) ,$ $\nu ^{\prime }=\Phi _{\mu }\left( \widetilde{\rho }%
\right) $ and%
\begin{equation*}
\gamma =\int_{0}^{\infty }\int_{\left\vert x\right\vert \leq 1}x\mu
^{s}\left( dx\right) \rho \left( ds\right) \text{; \ \ }\gamma ^{\prime
}=\int_{0}^{\infty }\int_{\left\vert x\right\vert \leq 1}x\mu ^{s}\left(
dx\right) \widetilde{\rho }\left( ds\right) .
\end{equation*}%
By hypothesis $\nu =\widetilde{\nu }$. Moreover, since $\mu $ is symmetric $%
\int_{\left\vert x\right\vert \leq 1}x\mu ^{s}\left( dx\right) =0$ for any $%
s>0,$ it follows that $\gamma =\gamma ^{\prime }=0$, which means that $%
\overline{\mu }=\overline{\mu }^{\prime }$, as required. To finish the
proof, note that $\eta _{\rho }$ and $\eta _{\widetilde{\rho }}$ are not
unique. To see this, it is enough to consider $T^{\rho }$ and $T^{\widetilde{%
\rho }}$ to be two subordinators with the same drift and L\'{e}vy measures $%
\rho $ and $\widetilde{\rho }$. In this case, by following the same
reasoning as above, we get that $\Phi _{\mu }\left( \eta _{\rho }\right)
=\Phi _{\mu }\left( \eta _{\widetilde{\rho }}\right) $, where, as before, $%
\eta _{\rho }$ and $\eta _{\widetilde{\rho }}$ are the laws of $T^{\rho }$
and $T^{\widetilde{\rho }}$, respectively.
\end{proof}

Now we present a proof of Theorem \ref{Thmalphastlm}:\smallskip

\begin{proof}[Proof of Theorem \protect\ref{Thmalphastlm}]
Assume that $\mu $ is symmetric. Let $\rho ,\widetilde{\rho }\in \mathfrak{M}%
_{L}^{1}\left( \mathbb{R}^{+}\right) $ such that $\Phi _{\mu }\left( \rho
\right) =\Phi _{\mu }\left( \widetilde{\rho }\right) $. From the previous
proposition there exist $\eta _{\rho },\eta _{\widetilde{\rho }}\in ID\left( 
\mathbb{R}^{+}\right) $ having L\'{e}vy measures $\rho $ and $\widetilde{%
\rho },$ respectively, such that $\Phi _{\mu }\left( \eta _{\rho }\right)
=\Phi _{\mu }\left( \eta _{\widetilde{\rho }}\right) $. An application of
Corollary \ref{injectprob} results in $\eta _{\rho }=\eta _{\widetilde{\rho }%
}$, which obviously implies that $\rho =\widetilde{\rho }$.
\end{proof}

\begin{remark}
\label{remarkinffint}Due to Proposition \ref{keyprop} and Theorem \ref%
{Thmalphastlm} we have that the injectivity of $\Phi _{\mu }$\ is equivalent
to the one of $\Phi _{\mu }^{0}$ in the symmetric case.
\end{remark}

Recall that the Upsilon transformation of $\eta $ via $\rho $ is defined as
the $\sigma $-finite measure given by $\Upsilon _{\rho }\left( \eta \right)
\left( \left\{ 0\right\} \right) =0$ and%
\begin{equation*}
\Upsilon _{\rho }\left( \eta \right) \left( A\right) =\int_{0}^{\infty }\eta
\left( s^{-1}A\right) \rho \left( ds\right) ,\text{ \ \ }A\in \mathcal{B}%
\left( \mathbb{R}\backslash \left\{ 0\right\} \right) .
\end{equation*}

\begin{example}
\label{examplestable}Suppose that $\mu \in ID\left( \mathbb{R}\right) $ is a
symmetric strictly $\alpha $-stable distribution with $0<\alpha \leq 2$. By
the strict stability, we have that for all $s>0$%
\begin{equation*}
\mu ^{s}\left( dx\right) =\mu \left( s^{-1/\alpha }dx\right) .
\end{equation*}%
Thus, if $\rho \in \mathfrak{M}_{L}^{1}\left( \mathbb{R}^{+}\right) $ 
\begin{eqnarray*}
\Phi _{\mu }\left( \rho \right) \left( dx\right) &=&\int_{0}^{\infty }\mu
\left( s^{-1/\alpha }dx\right) \rho \left( ds\right) \\
&=&\int_{0}^{\infty }\mu \left( s^{-1}dx\right) \rho _{\alpha }\left(
ds\right) ,
\end{eqnarray*}%
where $\rho _{\alpha }$ is the measure induced by $\rho $ and the mapping $%
s\longmapsto s^{1/\alpha }$. Therefore $\Phi _{\mu }\left( \rho \right) $
equals $\Upsilon _{\rho _{\alpha }}\left( \mu \right) $, the upsilon
transformation of $\mu $ via $\rho _{\alpha }$. In particular, when $\mu $
is concentrated in $\left( 0,\infty \right) $%
\begin{equation}
\Phi _{\mu }\left( \rho \right) =\Upsilon _{\mu }\left( \rho _{\alpha
}\right) =\mu \circledast \rho _{\alpha },\text{ \ \ }\rho \in \mathfrak{M}%
_{L}^{1}\left( \mathbb{R}^{+}\right) \text{.}  \label{eqn2.9}
\end{equation}%
where $\circledast $ is the multiplicative convolution of measures, this is 
\begin{equation*}
\nu \circledast \eta \left( A\right) =\int_{0}^{\infty }\int_{0}^{\infty
}1_{A}\left( xs\right) \eta \left( dx\right) \nu \left( ds\right) \text{, \
\ }A\in \mathcal{B}\left( \mathbb{R}^{+}\right) \text{,}
\end{equation*}%
for two $\sigma $-finite measures on $\mathcal{B}\left( \mathbb{R}%
^{+}\right) $. In this context, the injectivity of $\Phi _{\mu }$ is
equivalent to the injectivity of $\Upsilon _{\mu }$ and the \textit{%
cancellation property} of $\mu $ (see for instance \cite{16479436} and \cite%
{onlssgamma}). In the next subsection we study these problems.
\end{example}

\subsection{The case of $\protect\mu $ supported on $\left( 0,\infty \right) 
$}

In the symmetric case, the key point for the injectivity of $\Phi _{\mu }$
is Proposition \ref{keyprop}. In this subsection we will show that such
result also holds in the case when $\mu $ is concentrated on $\left(
0,\infty \right) $.

\begin{proposition}
\label{keyprop2}Suppose that $\mu \in ID\left( \mathbb{R}\right) $ has
support on $\left( 0,\infty \right) $. Then the statement of Proposition \ref%
{keyprop} holds.
\end{proposition}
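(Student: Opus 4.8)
The plan is to mimic the proof of Proposition \ref{keyprop} as closely as possible, replacing the role that symmetry played there. One direction is completely general and requires no new argument: if there exist $\eta_\rho,\eta_{\widetilde\rho}\in ID(\mathbb{R}^+)$ with L\'evy measures $\rho$ and $\widetilde\rho$ respectively and $\Phi_\mu(\eta_\rho)=\Phi_\mu(\eta_{\widetilde\rho})$, then Corollary \ref{injectprob} gives $\eta_\rho=\eta_{\widetilde\rho}$, and uniqueness of the characteristic triplet forces $\rho=\widetilde\rho$, whence $\Phi_\mu(\rho)=\Phi_\mu(\widetilde\rho)$ trivially. So the content is the converse.

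For the converse, assume $\Phi_\mu(\rho)=\Phi_\mu(\widetilde\rho)$ with $\rho,\widetilde\rho\in\mathfrak{M}_L^1(\mathbb{R}^+)$, and (discarding the trivial case) take them nonzero. Let $T^\rho,T^{\widetilde\rho}$ be purely discontinuous subordinators with L\'evy measures $\rho,\widetilde\rho$, let $\eta_\rho,\eta_{\widetilde\rho}$ be their laws at time $1$, and let $\overline\mu=\mathcal{L}(L_{T_1^\rho})$, $\overline\mu'=\mathcal{L}(L_{T_1^{\widetilde\rho}})$ where $L$ is the L\'evy process attached to $\mu$. Then $\overline\mu=\Phi_\mu(\eta_\rho)$ and $\overline\mu'=\Phi_\mu(\eta_{\widetilde\rho})$ in the sense of describing the full characteristic triplet, so it suffices to show the triplets of $\overline\mu$ and $\overline\mu'$ agree. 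By Theorem \ref{TheoremcqsubLB}, both have Gaussian part zero, L\'evy measures $\Phi_\mu(\rho)=\Phi_\mu(\widetilde\rho)$ (equal by hypothesis), and since $\mu$ — hence $L$ and the time-changed process — is supported on $[0,\infty)$, these subordinated processes have finite variation; it is cleaner to work with the drift $\beta_0$ relative to the truncation $\tau_0\equiv 0$, which by Theorem \ref{TheoremcqsubLB} equals $\gamma\beta_0(s)$-type terms that vanish because $\mu$ has no drift beyond its own $\beta_0$ and $T^\rho,T^{\widetilde\rho}$ are purely discontinuous; in fact the natural drift of $L_{T_1^\rho}$ is $\beta_0^{(\mu)}\cdot(\text{drift of }T^\rho)=0$, and similarly for $\widetilde\rho$. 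Hence both $\overline\mu$ and $\overline\mu'$ are driftless subordinator-type ID laws with the same L\'evy measure, so $\overline\mu=\overline\mu'$, i.e.\ $\Phi_\mu(\eta_\rho)=\Phi_\mu(\eta_{\widetilde\rho})$. Non-uniqueness of $\eta_\rho,\eta_{\widetilde\rho}$ follows exactly as in Proposition \ref{keyprop}: add any common drift to both subordinators.

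The main obstacle, and the point that genuinely differs from the symmetric case, is the bookkeeping of the drift term. In Proposition \ref{keyprop} symmetry made $\int_{|x|\le1}x\,\mu^s(dx)=0$, killing the truncated-drift contribution directly. Here $\mu$ is supported on $(0,\infty)$ and that integral need not vanish, so one must instead exploit that $\mu\in ID(\mathbb{R})$ with support on $(0,\infty)$ is necessarily a subordinator law — so it satisfies $\int_0^\infty(1\wedge x)\nu(dx)<\infty$, has representation $(\beta_0,\nu)_0$ with $\beta_0\ge0$, and the subordinated laws $\overline\mu,\overline\mu'$ are again of this finite-variation type with well-defined $\beta_0$-drifts. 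The key computation is then that the $\tau_0$-drift of $L_{T_1^\rho}$ is $\beta_0^{(\mu)}$ times the drift of $T^\rho$, which is $0$ for a purely discontinuous subordinator; equality of L\'evy measures then closes the argument. I would phrase this via Theorem \ref{TheoremcqsubLB} applied in the homogeneous case (the Remark after that theorem), taking $\beta_0$ of the subordinator equal to $0$, so that $\overline{\gamma}$ reduces to $\int_0^\infty\int_{|x|\le1}x\mu^s(dx)\rho(ds)$ and, after converting to the $\tau_0$-triplet, the drift becomes $\beta_0^{(\mu)}\cdot 0=0$; the analogous expression for $\widetilde\rho$ is likewise $0$, and since the L\'evy measures coincide by hypothesis we conclude $\overline\mu=\overline\mu'$.
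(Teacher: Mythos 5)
Your proposal is correct and follows essentially the same route as the paper: both reduce the problem to checking that the truncated drifts $\gamma$ and $\gamma'$ coincide, and both establish this by exploiting that $\mu$ (hence $\overline{\mu}$, $\overline{\mu}'$) is a subordinator law whose drift relative to $\tau_0$ vanishes, so that $\gamma=\int_{|x|\le 1}x\,\Phi_\mu(\rho)(dx)$ is a functional of the (common) L\'evy measure alone. The paper phrases this last step as a monotone-convergence computation, $\gamma=\lim_{\varepsilon\downarrow 0}\int_{\varepsilon}^{1}x\,\nu(dx)$, whereas you phrase it via the multiplicativity of $\tau_0$-drifts under subordination; these are the same calculation.
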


\begin{proof}
The argument in the proof of Proposition \ref{keyprop} remains the same in
this case, except that $\int_{\left\vert x\right\vert \leq 1}x\mu ^{s}\left(
dx\right) =0$ for any $s>0$, which of course is not true when $\mu $ has
support on $\left( 0,\infty \right) $. Therefore, under the notation of the
proof of Proposition \ref{keyprop}, we only need to show that $\gamma
=\gamma ^{\prime }$. Since $\mu $ has support on $\left( 0,\infty \right) $,
it corresponds to the law of a subordinator. Consequently, $\overline{\mu }$
and $\overline{\mu }^{\prime }$ are also concentrated on $\left( 0,\infty
\right) $ and for any non-negative measurable function $f$ vanishing on a
neighborhood of zero we have that 
\begin{equation*}
\int_{\mathbb{R}}f\left( x\right) \nu \left( dx\right) =\int_{0}^{\infty
}\int_{0}^{\infty }f\left( x\right) \mu ^{s}\left( dx\right) \rho \left(
ds\right) \text{,}
\end{equation*}%
and 
\begin{equation*}
\int_{\mathbb{R}}f\left( x\right) \nu ^{\prime }\left( dx\right)
=\int_{0}^{\infty }\int_{0}^{\infty }f\left( x\right) \mu ^{s}\left(
dx\right) \widetilde{\rho }\left( ds\right) .
\end{equation*}%
Thus, by the Monotone Convergence Theorem%
\begin{equation*}
\gamma =\lim_{\varepsilon \downarrow 0}\int_{0}^{\infty }\int_{\varepsilon
}^{1}x\mu ^{s}\left( dx\right) \rho \left( ds\right) =\lim_{\varepsilon
\downarrow 0}\int_{\varepsilon }^{1}x\nu \left( dx\right) ,
\end{equation*}%
and 
\begin{equation*}
\gamma ^{\prime }=\lim_{\varepsilon \downarrow 0}\int_{0}^{\infty
}\int_{\varepsilon }^{1}x\mu ^{s}\left( dx\right) \widetilde{\rho }\left(
ds\right) =\lim_{\varepsilon \downarrow 0}\int_{\varepsilon }^{1}x\nu
^{\prime }\left( dx\right) ,
\end{equation*}%
but by hypothesis $\nu =\nu ^{\prime },$ consequently $\gamma =\gamma
^{\prime }$, as required.
\end{proof}

\bigskip

By the previous proposition and Remark \ref{remarkinffint}, the
invertibility of $\Phi _{\mu }$ is equivalent to the injectivity of $\Phi
_{\mu }^{0}$, or in other words:

\begin{theorem}
\label{Thmgamma}Let $\mu \in ID\left( \mathbb{R}\right) $ having support on $%
\left( 0,\infty \right) .$ Then, $\Phi _{\mu }$ is one-to-one on its own
domain and $\Phi _{\mu }\left( \mathfrak{M}_{L}^{1}\left( \mathbb{R}%
^{+}\right) \right) \subsetneq \mathfrak{M}_{L}^{1}\left( \mathbb{R}%
^{+}\right) $.
\end{theorem}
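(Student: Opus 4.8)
I would split the statement into its two halves. \emph{Injectivity:} this is a verbatim copy of the proof of Theorem~\ref{Thmalphastlm}, with Proposition~\ref{keyprop2} replacing Proposition~\ref{keyprop}. Recall that $\mathcal{D}_L(\Phi_\mu)=\mathfrak{M}_L^1(\mathbb{R}^+)$ by Proposition~\ref{popdomrange} (as $\mu$ is non-degenerate). Take $\rho,\widetilde\rho\in\mathfrak{M}_L^1(\mathbb{R}^+)$ with $\Phi_\mu(\rho)=\Phi_\mu(\widetilde\rho)$. Proposition~\ref{keyprop2} --- the analogue of Proposition~\ref{keyprop} for $\mu$ supported on $(0,\infty)$ --- produces $\eta_\rho,\eta_{\widetilde\rho}\in ID(\mathbb{R}^+)$, with Lévy measures $\rho$ and $\widetilde\rho$ respectively, such that $\Phi_\mu(\eta_\rho)=\Phi_\mu(\eta_{\widetilde\rho})$. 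Since $\eta_\rho,\eta_{\widetilde\rho}\in\mathcal{P}(\mathbb{R}^+)$, Corollary~\ref{injectprob} gives $\eta_\rho=\eta_{\widetilde\rho}$, and uniqueness of the characteristic triplet forces $\rho=\widetilde\rho$. Equivalently, Proposition~\ref{keyprop2} together with Remark~\ref{remarkinffint} reduces the injectivity of $\Phi_\mu$ to that of $\Phi_\mu^0$, which is Theorem~\ref{Thmcompoundpoisson}.

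\emph{The range sits inside $\mathfrak{M}_L^1(\mathbb{R}^+)$:} here I would argue probabilistically. Given $\rho\in\mathfrak{M}_L^1(\mathbb{R}^+)$, let $T^\rho$ be a driftless subordinator with Lévy measure $\rho$ (this uses exactly $\rho\in\mathfrak{M}_L^1(\mathbb{R}^+)$), taken independent of a subordinator $L$ with $\mathcal{L}(L_1)=\mu$. Then $L_{T^\rho}$ is again a subordinator, so the Lévy measure of $\mathcal{L}(L_{T^\rho_1})\in ID(\mathbb{R}^+)$ belongs to $\mathfrak{M}_L^1(\mathbb{R}^+)$; and by Theorem~\ref{TheoremcqsubLB} with $\beta_0\equiv0$ (equivalently, Theorem~30.1 in \cite{sato}) that Lévy measure equals $\Phi_\mu(\rho)$. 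A hands-on alternative: every $\mu^s$ is carried by $(0,\infty)$, hence so is $\Phi_\mu(\rho)$, which is a Lévy measure by Proposition~\ref{popdomrange}, so it suffices to bound $\int_{(0,1]}x\,\Phi_\mu(\rho)(dx)=\int_0^\infty\big(\int_{(0,1]}x\,\mu^s(dx)\big)\,\rho(ds)$; using $1\wedge x\le C(1-e^{-x})$ and $\int_0^\infty e^{-x}\mu^s(dx)=q^s$ with $q:=\int_0^\infty e^{-x}\mu(dx)\in(0,1)$ one gets $\int_{(0,1]}x\,\mu^s(dx)\le C(1-q^s)$, and splitting the outer integral at $s=1$ (with $1-q^s\le s|\log q|$ for $s\le1$ and $1-q^s\le1$ otherwise) yields the finite bound $C|\log q|\int_{(0,1]}s\,\rho(ds)+C\,\rho((1,\infty))$, both terms finite since $\rho\in\mathfrak{M}_L^1(\mathbb{R}^+)$.

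\emph{Strictness:} fix any $c>0$; then $\delta_c\in\mathfrak{M}_L^1(\mathbb{R}^+)$, but $\delta_c$ is not in the range. Indeed, if $\Phi_\mu(\rho)=\delta_c$, then (using $\mu^s(\{0\})=0$, which holds because $\mu$ is a non-degenerate law on $(0,\infty)$) comparison of total masses forces $\rho$ to be a probability measure, so $1=\delta_c(\{c\})=\int_0^\infty\mu^s(\{c\})\,\rho(ds)$ with $\mu^s(\{c\})\le1$ forces $\mu^s(\{c\})=1$, i.e. $\mu^s=\delta_c$, for $\rho$-almost every $s>0$; choosing such an $s$ gives $\widehat\mu(z)^s=e^{icz}$, hence $\mu=\delta_{c/s}$, contradicting non-degeneracy. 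Therefore $\Phi_\mu(\mathfrak{M}_L^1(\mathbb{R}^+))\subsetneq\mathfrak{M}_L^1(\mathbb{R}^+)$.

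The injectivity half carries no difficulty beyond what Proposition~\ref{keyprop2} already supplies, and the strictness half is an explicit one-line example. The only step needing genuine care is checking $\Phi_\mu(\rho)\in\mathfrak{M}_L^1(\mathbb{R}^+)$: one must separate the behaviour of $\Phi_\mu(\rho)$ near the origin from its tail and match each to one of the two finiteness conditions defining $\mathfrak{M}_L^1(\mathbb{R}^+)$ --- which becomes automatic if one passes through the subordinator $L_{T^\rho}$ instead of estimating by hand.
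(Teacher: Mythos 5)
Your injectivity argument is exactly the paper's: the paper derives Theorem~\ref{Thmgamma} in one line from Proposition~\ref{keyprop2} together with Remark~\ref{remarkinffint} and the injectivity of $\Phi_{\mu}^{0}$ (Theorem~\ref{Thmcompoundpoisson}), which is precisely your reduction. The paper gives no explicit argument for the range assertion $\Phi_{\mu}\left( \mathfrak{M}_{L}^{1}\left( \mathbb{R}^{+}\right) \right) \subsetneq \mathfrak{M}_{L}^{1}\left( \mathbb{R}^{+}\right)$, and both your routes to the inclusion (via the subordinated subordinator $L_{T^{\rho }}$, or via the Laplace-transform estimate $\int_{0}^{\infty }\left( 1\wedge x\right) \mu ^{s}\left( dx\right) \leq C\left( 1-q^{s}\right)$) and your $\delta _{c}$ counterexample for strictness (which, unlike the density argument in Proposition~\ref{popdomrange}, needs no absolute continuity of $\mu$) are correct and fill that gap.
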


\begin{example}[Gamma process]
Suppose that%
\begin{equation}
\mu \left( dx\right) =\lambda e^{-\lambda x}\mathbf{1}_{\left( 0,\infty
\right) }\left( x\right) dx\text{, \ \ }\lambda >0\text{.}  \label{eqn2.4}
\end{equation}%
From the previous theorem, the induced mapping $\Phi _{\mu }$ is one-to-one.
However, a direct proof can be given: We know that in this case, for any $%
s>0 $ 
\begin{equation*}
\mu ^{s}\left( dx\right) =\frac{\left( \lambda x\right) ^{s}}{x\Gamma \left(
s\right) }e^{-\lambda x}\mathbf{1}_{\left( 0,\infty \right) }\left( x\right)
dx.
\end{equation*}%
Thus, $\Phi _{\mu }\left( \rho \right) $ is absolutely continuous with
density given by $f_{\rho }\left( 0\right) =0$ and for $x>0$%
\begin{equation}
f_{\rho }\left( x\right) =\frac{e^{-\lambda x}}{x}\int_{0}^{\infty }\frac{%
\left( \lambda x\right) ^{s}}{\Gamma \left( s\right) }\rho \left( ds\right) .
\label{eqn2.5}
\end{equation}%
We claim that the transformation $\rho \longmapsto f_{\rho }$ is uniquely
determined by $\rho $. Let $\widetilde{\rho }\in \mathfrak{M}_{L}^{1}\left( 
\mathbb{R}^{+}\right) ,$ such that $f_{\rho }=f_{\widetilde{\rho }}.$ Then, $%
f_{\rho }\left( 0\right) =f_{\widetilde{\rho }}\left( 0\right) $\ and from (%
\ref{eqn2.5}), for all $x>0$%
\begin{equation}
\int_{0}^{\infty }\frac{\left( \lambda x\right) ^{s}}{\Gamma \left( s\right) 
}\rho \left( ds\right) =\int_{0}^{\infty }\frac{\left( \lambda x\right) ^{s}%
}{\Gamma \left( s\right) }\widetilde{\rho }\left( ds\right) .  \label{eqn2.6}
\end{equation}%
Consider the measures 
\begin{equation*}
\rho _{\Gamma }\left( ds\right) =\frac{1}{\Gamma \left( s\right) }\rho
\left( ds\right) \text{; \ }\widetilde{\rho }_{\Gamma }\left( ds\right) =%
\frac{1}{\Gamma \left( s\right) }\widetilde{\rho }\left( ds\right) .
\end{equation*}%
Since the gamma function behaves as $s^{-1}$ near zero, we have that $\rho
_{\Gamma }$ and $\widetilde{\rho }_{\Gamma }$ are finite measures.
Furthermore, from (\ref{eqn2.6}), $\rho _{\Gamma }$ and $\widetilde{\rho }%
_{\Gamma }$ have the same mass, so without loss of generality we may and do
assume that $\rho _{\Gamma }$ and $\widetilde{\rho }_{\Gamma }$ are
probability measures satisfying 
\begin{equation*}
\int_{0}^{\infty }x^{s}\rho _{\Gamma }\left( ds\right) =\int_{0}^{\infty
}x^{s}\widetilde{\rho }_{\Gamma }\left( ds\right) ,\text{ \ \ for all }x>0.
\end{equation*}%
Thus, by the uniqueness of the generating function (see Chapter 4 in \cite%
{hoffmanprob}), we conclude that $\rho _{\Gamma }=\widetilde{\rho }_{\Gamma
} $ or $\rho =\widetilde{\rho },$ because the gamma function is continuous
and strictly positive in $\left( 0,\infty \right) $.
\end{example}

\begin{example}[$\protect\alpha $-stable distribution]
Suppose that $\mu \in ID\left( \mathbb{R}^{+}\right) $ is a strictly $\alpha 
$-stable distribution for $0<\alpha <1$. From equation (\ref{eqn2.9}) in
Example \ref{examplestable}, the mapping $\Phi _{\mu }$ corresponds to the
upsilon transformation%
\begin{equation*}
\Upsilon _{\mu }\left( \eta \right) \left( A\right) =\int_{0}^{\infty
}\int_{0}^{\infty }1_{A}\left( xs\right) \eta \left( ds\right) \mu \left(
dx\right) \text{, \ \ }A\in \mathcal{B}\left( \mathbb{R}\backslash \left\{
0\right\} \right) \text{.}
\end{equation*}%
According to Theorem 3.4 in \cite{MR2421179}, the L\'{e}vy domain of $%
\Upsilon _{\mu }$ cannot be neither $\mathfrak{M}_{L}\left( \mathbb{R}%
^{+}\right) $ nor $\mathfrak{M}_{L}^{1}\left( \mathbb{R}^{+}\right) $
because $\mu $ has moments strictly smaller than $\alpha $. Nevertheless, by
equation (\ref{eqn2.9}), $\Upsilon _{\mu }\left( \eta \right) \in \mathfrak{M%
}_{L}\left( \mathbb{R}\right) $ if the image measure of $\eta $ and the
mapping $s\longmapsto s^{\alpha }$ belong to $\mathfrak{M}_{L}^{1}\left( 
\mathbb{R}^{+}\right) $. In such set of measures, $\Upsilon _{\mu }$ is
one-to-one. The cancellation property of $\mu $ is obtained in an analogous
way.
\end{example}

\section{On the law of a subordinated L\'{e}vy process}

In this section we study the law associated with a subordinated L\'{e}vy
process and we show that this class is in a bijection with the infinitely
divisible distributions asscociated with subordinators. For a fixed L\'{e}vy
process, as we will see below, the law of a subordinated L\'{e}vy process
appears as the restriction of the mapping $\Phi _{\mu }$ to the space of
infinitely divisible distributions with support on $\left( 0,\infty \right) $%
. In what follows, $\mathcal{L}\left( X\right) $ will denote the
distribution of the random variable $X$.

Let us start by introducing the mappings which we are interested in. Let $%
\mu _{L},\mu _{T}\in ID\left( \mathbb{R}\right) $ with characteristic
triplets $\left( \gamma ,b,\nu \right) $ and $\left( \beta _{0},0,\rho
\right) _{0}$, respectively. Put $\overline{\mu }\in ID\left( \mathbb{R}%
\right) $ as the probability distribution with characteristic triplet
(relative to $\tau $) $\left( \overline{\gamma },\overline{b},\overline{\nu }%
\right) $ given by 
\begin{eqnarray}
\overline{b} &=&b\beta _{0};  \notag \\
\overline{\gamma } &=&\gamma \beta _{0}+\int_{0}^{\infty }\int_{\left\vert
x\right\vert \leq 1}x\mu ^{s}\left( dx\right) \rho \left( ds\right) ;
\label{eqn3.2} \\
\overline{\nu }\left( dx\right) &=&\beta _{0}\nu \left( dx\right)
+\int_{0}^{\infty }\mu ^{s}\left( dx\right) \rho \left( ds\right) .  \notag
\end{eqnarray}%
Define $\Lambda _{\mu _{L}}:\mathcal{D}\left( \Lambda _{\mu _{L}}\right)
\rightarrow ID\left( \mathbb{R}\right) $, $\Lambda _{\mu _{T}}:\mathcal{D}%
\left( \Lambda _{\mu _{T}}\right) \rightarrow ID\left( \mathbb{R}\right) $
and $\Lambda :\mathcal{D}\left( \Lambda \right) \rightarrow ID\left( \mathbb{%
R}\right) $ by the formulae 
\begin{eqnarray*}
\Lambda _{\mu _{L}}\left( \mu _{T}\right) &=&\overline{\mu }; \\
\Lambda _{\mu _{T}}\left( \mu _{L}\right) &=&\overline{\mu }; \\
\Lambda \left( \mu _{L},\mu _{T}\right) &=&\overline{\mu },
\end{eqnarray*}%
for some domains $\mathcal{D}\left( \Lambda _{\mu _{L}}\right) ,\mathcal{D}%
\left( \Lambda _{\mu _{T}}\right) $ and $\mathcal{D}\left( \Lambda \right) .$
Observe that $\Lambda _{\mu _{L}}$ and $\Lambda _{\mu _{T}}$ are the
sections of $\Lambda $ for fixed $\mu _{L}$ and $\mu _{T},$ respectively.

In the following remarks $\overline{\mu }$ is the probability measure
associated with $\left( \overline{\gamma },\overline{b},\overline{\nu }%
\right) $ as in (\ref{eqn3.2}).

\begin{remark}
\label{Remark3.1} Observe that in general, $\left( \overline{\gamma },%
\overline{b},\overline{\nu }\right) $ is the characteristic triplet of an
infinitely divisible distribution if and only if $\beta _{0}\geq 0$ and $%
\rho \in \mathcal{D}_{L}\left( \Phi _{\mu _{L}}\right) $.
\end{remark}

\begin{remark}
\label{Remark3.2}Let $L$ be a L\'{e}vy process and $T$ a subordinator with
characteristic triplet $\left( \gamma ,b,\nu \right) $ and pair $\left(
\beta _{0},\rho \right) _{0}$, respectively. Consider $L_{T}$ to be the L%
\'{e}vy process obtained by subordinating $L$ through $T$. Then, by Theorem %
\ref{TheoremcqsubLB} and (\ref{eqn2.0})%
\begin{equation}
\overline{\mu }=\mathcal{L}\left( L_{T_{1}}\right) =\Phi _{\mu _{L}}\left(
\mu _{T}\right) \text{, \ \ }\mu _{T}\sim T_{1}\text{, }\mu _{L}\sim L_{1}%
\text{.}  \label{eqn3.2.1}
\end{equation}
\end{remark}

\begin{remark}
\label{Remark3.4}Observe that $\Lambda $ is not one-to-one in general, for
instance if $\mu _{T}\sim \left( a,0,0\right) $, $\mu _{L}\sim \left(
0,b,0\right) $, $\widetilde{\mu }_{T}\sim \left( a^{2},0,0\right) $ and $%
\widetilde{\mu }_{L}\sim \left( 0,\frac{1}{a}b,0\right) $. Then, by (\ref%
{eqn3.2}) 
\begin{equation*}
\Lambda \left( \mu _{L},\mu _{T}\right) =\Lambda \left( \widetilde{\mu }_{L},%
\widetilde{\mu }_{T}\right) .
\end{equation*}
\end{remark}

For the rest of this section, we study some properties of $\Lambda _{\mu
_{L}}$ and $\Lambda _{\mu _{T}}$, including their injectivity. Let $\mathcal{%
R}\left( \Lambda _{\mu _{L}}\right) $ and $\mathcal{R}\left( \Lambda _{\mu
_{T}}\right) $ be the range of $\Lambda _{\mu _{L}}$ and $\Lambda _{\mu
_{T}} $, respectively. Denote by $ID\left( \mathbb{R}^{+}\right) $ the
subset of infinitely divisible distributions with support on $\mathbb{R}^{+}$%
.

\begin{proposition}
\label{Proplinearlambda}We have that $\mathcal{D}\left( \Lambda _{\mu
_{L}}\right) =ID\left( \mathbb{R}^{+}\right) $ and $\mathcal{D}\left(
\Lambda _{\mu _{T}}\right) =ID\left( \mathbb{R}\right) $ and in general the
ranges $\mathcal{R}\left( \Lambda _{\mu _{L}}\right) $ and $\mathcal{R}%
\left( \Lambda _{\mu _{T}}\right) $ are proper subsets of $ID\left( \mathbb{R%
}\right) $. Furthermore, $\mathcal{R}\left( \Lambda _{\mu _{L}}\right) $ is
closed under convolutions and 
\begin{equation}
\Lambda _{\mu _{L}}\left( \mu ^{1}\ast \mu ^{2}\right) =\Lambda _{\mu
_{L}}\left( \mu ^{1}\right) \ast \Lambda _{\mu _{L}}\left( \mu ^{2}\right) ,%
\text{ \ \ }\mu ^{1},\mu ^{2}\in \mathcal{D}\left( \Lambda _{\mu
_{L}}\right) \text{.}  \label{eqn3.4}
\end{equation}%
In contrast, $\mathcal{R}\left( \Lambda _{\mu _{T}}\right) $ is not closed
under convolutions in general.
\end{proposition}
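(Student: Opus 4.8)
The plan is to treat the three assertions in turn: first the identification of the two domains, then the properness of the two ranges, and finally the behaviour under convolution.

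For the domains I would read off the admissibility condition from Remark \ref{Remark3.1}: the triplet $(\overline{\gamma},\overline{b},\overline{\nu})$ of \eqref{eqn3.2} is the characteristic triplet of an infinitely divisible law exactly when $\beta_0\ge 0$ and $\rho\in\mathcal{D}_L(\Phi_{\mu_L})$. When $\mu_L$ is fixed (and, by the standing convention, non-degenerate), Proposition \ref{popdomrange} gives $\mathcal{D}_L(\Phi_{\mu_L})=\mathfrak{M}_L^1(\mathbb{R}^+)$, so the admissible $\mu_T$ are precisely those with no Gaussian part, drift $\beta_0\ge 0$ and L\'{e}vy measure $\rho\in\mathfrak{M}_L^1(\mathbb{R}^+)$, which is nothing but the L\'{e}vy--Khintchine data of a subordinator; hence $\mathcal{D}(\Lambda_{\mu_L})=ID(\mathbb{R}^+)$. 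When instead $\mu_T\in ID(\mathbb{R}^+)$ is fixed, its pair satisfies $\beta_0\ge 0$ and $\rho\in\mathfrak{M}_L^1(\mathbb{R}^+)$, and since Proposition \ref{popdomrange} gives $\mathfrak{M}_L^1(\mathbb{R}^+)=\mathcal{D}_L(\Phi_{\mu_L})$ for \emph{every} non-degenerate $\mu_L\in ID(\mathbb{R})$, the condition of Remark \ref{Remark3.1} holds for all such $\mu_L$; thus $\mathcal{D}(\Lambda_{\mu_T})=ID(\mathbb{R})$.

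For the properness of the ranges it suffices, for an ``in general'' statement, to exhibit one choice in each case. Taking $\mu_L=N(0,1)$, symmetry forces $\int_{|x|\le 1}x\,\mu_L^s(dx)=0$ for all $s>0$, so by \eqref{eqn3.2} every element of $\mathcal{R}(\Lambda_{\mu_L})$ is symmetric; consequently $N(1,1)\notin\mathcal{R}(\Lambda_{\mu_L})$. Taking $\mu_T$ to be the law at time $1$ of a unit-rate Poisson process, so that $\beta_0=0$ and $\rho=\delta_{1}$, equation \eqref{eqn3.2} (equivalently Remark \ref{Remark3.2}, since $\overline{\mu}=\mathcal{L}(L_{T_1})$ is then the compound Poisson law with jump law $\mu_L$ and unit rate) shows that the L\'{e}vy measure of every element of $\mathcal{R}(\Lambda_{\mu_T})$ has total mass at most $1$; hence $N(0,1)\notin\mathcal{R}(\Lambda_{\mu_T})$.

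The heart of the matter is the convolution identity \eqref{eqn3.4}, which I would establish at the level of characteristic triplets. For $\mu^1,\mu^2\in\mathcal{D}(\Lambda_{\mu_L})=ID(\mathbb{R}^+)$ with pairs $(\beta_0^{1},\rho^{1})_0$ and $(\beta_0^{2},\rho^{2})_0$, the convolution $\mu^1\ast\mu^2$ again lies in $ID(\mathbb{R}^+)$ and, since cumulants add, has pair $(\beta_0^{1}+\beta_0^{2},\rho^{1}+\rho^{2})_0$. Each of the three entries in \eqref{eqn3.2} is additive in the pair: this is immediate for $\overline{b}$ and for $\overline{\gamma}$ (the defining integral is linear in $\rho$, and converges because $\rho^i\in\mathcal{D}_L(\Phi_{\mu_L})$), while for $\overline{\nu}$ it rests on $\Phi_{\mu_L}(\rho^{1}+\rho^{2})=\Phi_{\mu_L}(\rho^{1})+\Phi_{\mu_L}(\rho^{2})$, clear from the definition of $\Phi_{\mu_L}$. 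Hence the triplet of $\Lambda_{\mu_L}(\mu^1\ast\mu^2)$ is the sum of those of $\Lambda_{\mu_L}(\mu^1)$ and $\Lambda_{\mu_L}(\mu^2)$, i.e.\ the triplet of $\Lambda_{\mu_L}(\mu^1)\ast\Lambda_{\mu_L}(\mu^2)$; by uniqueness of the triplet this yields \eqref{eqn3.4}, and closure of $\mathcal{R}(\Lambda_{\mu_L})$ under convolution follows at once. (Alternatively, one can subordinate $L$ by an independent sum of two subordinators and use that $L$ has independent increments over disjoint intervals.) Finally, to see that $\mathcal{R}(\Lambda_{\mu_T})$ need not be closed under convolution, I reuse the Poisson $\mu_T$ above: $\overline{\mu}:=\Lambda_{\mu_T}(N(0,1))$ has L\'{e}vy measure of total mass $1$, whereas $\overline{\mu}\ast\overline{\mu}$ has L\'{e}vy measure of total mass $2$ and so does not belong to $\mathcal{R}(\Lambda_{\mu_T})$. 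No step presents a genuine obstacle; the only care needed is the interplay between the ``pair'' (truncation $\tau_0$) and ``triplet'' (truncation $\tau$) conventions when checking additivity of $\overline{\gamma}$, and the (minor) choice of the counterexamples for the two properness claims.
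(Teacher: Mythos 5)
Your argument is correct, but it reaches the convolution identity \eqref{eqn3.4} by a genuinely different route than the paper. You work at the level of characteristic triplets: the pair $(\beta_0,\rho)_0$ is additive under convolution, each entry of \eqref{eqn3.2} is linear in that pair, and uniqueness of the triplet then gives \eqref{eqn3.4}. The paper instead works with cumulant functions: by Remark \ref{Remark3.2} and Theorem 30.1 of Sato it writes $\log\widehat{\Lambda_{\mu_L}(\mu)}=\psi_{\mu}\circ\log\widehat{\mu_L}$ with $\psi_{\mu}(z)=\log\int_{0}^{\infty}e^{zx}\mu(dx)$, and then uses $\psi_{\mu^{1}\ast\mu^{2}}=\psi_{\mu^{1}}+\psi_{\mu^{2}}$. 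Both are rigorous; yours avoids the Laplace-exponent machinery at the cost of checking convergence of the $\overline{\gamma}$-integral, while the paper's computation with $\psi_{\mu}$ also explains in one line why $\Lambda_{\mu_T}$ fails to be a homomorphism (namely, $\psi_{\mu_T}$ is not additive). On the last point your Poisson counterexample is in fact more complete than the paper's: the paper only notes that $\psi_{\mu_T}(w_1+w_2)\neq\psi_{\mu_T}(w_1)+\psi_{\mu_T}(w_2)$ in general, i.e.\ that $\Lambda_{\mu_T}$ is not a convolution homomorphism, whereas you actually exhibit $\overline{\mu}\ast\overline{\mu}$ with L\'{e}vy mass $2$ lying outside a range whose elements all have L\'{e}vy mass at most $1$. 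The domain identifications follow the paper (via Proposition \ref{popdomrange} and Remark \ref{Remark3.1}), and your explicit examples for properness of the ranges supply details the paper leaves implicit.

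One small slip: in the properness argument for $\mathcal{R}(\Lambda_{\mu_T})$, the bound ``L\'{e}vy measure of total mass at most $1$'' does not by itself exclude $N(0,1)$, whose L\'{e}vy measure is zero. The example still works, but for a different reason: with $\beta_0=0$ the Gaussian coefficient $\overline{b}=b\beta_0$ vanishes, so every element of $\mathcal{R}(\Lambda_{\mu_T})$ is compound Poisson and in particular not Gaussian. (Your later use of the mass bound, to rule out $\overline{\mu}\ast\overline{\mu}$, is correct.)
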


\begin{proof}
The first part is derived directly from the definition of $\Lambda _{\mu
_{L}}$ and $\Lambda _{\mu _{T}}$, Proposition \ref{popdomrange} and Remark %
\ref{Remark3.1}. Now, we proceed to verify (\ref{eqn3.4}). Let $\mu _{L}\in
ID\left( \mathbb{R}\right) .$ Suppose that $\mu \in \mathcal{D}\left(
\Lambda _{\mu _{L}}\right) $ and has pair $\left( \beta _{0},\rho \right)
_{0}$. Then by Remark \ref{Remark3.2} and Theorem 30.1 in \cite{sato}%
\begin{equation}
\log \widehat{\Lambda _{1}}\left( \mu \right) \left( \theta \right) =\psi
_{\mu }\left( \log \widehat{\mu _{L}}\left( \theta \right) \right) \text{, \ 
}\theta \in \mathbb{R}\text{,}  \label{eqn3.3}
\end{equation}%
where 
\begin{equation}
\psi _{\mu }\left( z\right) :=\log \int_{0}^{\infty }e^{zx}\mu \left(
dx\right) <\infty \text{, \ \ }z\in \mathbb{C}\text{,}\text{Re}\left(
z\right) \leq 0.  \label{eqn3.5}
\end{equation}%
Hence, if $\mu ^{1},\mu ^{2}\in \mathcal{D}\left( \Lambda _{\mu _{L}}\right)
,$ using that $\psi _{\mu ^{1}\ast \mu ^{2}}=\psi _{\mu ^{1}}+\psi _{\mu
^{2}}$, we get that for any\ $\theta \in \mathbb{R}$%
\begin{eqnarray*}
\log \widehat{\Lambda _{\mu _{L}}}\left( \mu ^{1}\ast \mu ^{2}\right) \left(
\theta \right) &=&\psi _{\mu ^{1}\ast \mu ^{2}}\left( \log \widehat{\mu _{L}}%
\left( \theta \right) \right) \\
&=&\psi _{\mu ^{1}}\left( \log \widehat{\mu _{L}}\left( \theta \right)
\right) +\psi _{\mu ^{2}}\left( \log \widehat{\mu _{L}}\left( \theta \right)
\right) \\
&=&\log \widehat{\overline{\mu }^{1}\ast \overline{\mu }^{2}}\left( \theta
\right) \text{,}
\end{eqnarray*}%
where $\Lambda _{\mu _{L}}\left( \mu ^{1}\right) =\overline{\mu }^{1}$ and $%
\Lambda _{\mu _{L}}\left( \mu ^{2}\right) =\overline{\mu }^{2},$ and $%
\widehat{\eta }$ stands for Fourier transform of the measure $\eta .$ This
shows (\ref{eqn3.4}) and in particular it follows that $\mathcal{R}\left(
\Lambda _{\mu _{L}}\right) $ is closed under convolutions$.$ The last part
is obtained by noting that in general for a fix $\mu _{T}\in ID\left( 
\mathbb{R}^{+}\right) $%
\begin{equation*}
\psi _{\mu _{T}}\left( \log \widehat{\mu ^{1}}\left( \theta \right) +\log 
\widehat{\mu ^{2}}\left( \theta \right) \right) \neq \psi _{\mu }\left( \log 
\widehat{\mu ^{1}}\left( \theta \right) \right) +\psi _{\mu }\left( \log 
\widehat{\mu ^{2}}\left( \theta \right) \right) \text{, \ \ }\theta \in 
\mathbb{R}\text{.}
\end{equation*}
\end{proof}

Note that in what follows, by \emph{continuity} we mean continuity with
respect to the weak convergence of probability measures.

\begin{proposition}
\label{Propcontinuitylambda}The mappings $\Lambda _{\mu _{L}}$ and $\Lambda
_{\mu _{T}}$ are continuous.
\end{proposition}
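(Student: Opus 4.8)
The plan is to exploit the characterisation \eqref{eqn3.3} of the two mappings in terms of the analytic function $\psi_{\mu}$ together with L\'{e}vy's continuity theorem, treating the two cases separately. For $\Lambda_{\mu_{L}}$, fix $\mu_{L}\in ID(\mathbb{R})$ and suppose $\mu_{n}\to\mu$ weakly with $\mu_{n},\mu\in\mathcal{D}(\Lambda_{\mu_{L}})=ID(\mathbb{R}^{+})$. Each $\mu_{n}$ is the law at time $1$ of a subordinator, so its Laplace transform $\ell_{n}(u):=\int_{0}^{\infty}e^{-ux}\mu_{n}(dx)$ converges pointwise on $u\geq 0$ to that of $\mu$; equivalently, the cumulant function $\psi_{\mu_{n}}(z)\to\psi_{\mu}(z)$ for every $z\in\mathbb{C}$ with $\mathrm{Re}(z)\leq 0$ (this is the extension of the continuity theorem to Laplace transforms of measures on $\mathbb{R}^{+}$). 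Since $z\mapsto\log\widehat{\mu_{L}}(\theta)$ takes values in $\{\mathrm{Re}(z)\leq 0\}$ for every $\theta\in\mathbb{R}$, composing gives $\log\widehat{\Lambda_{\mu_{L}}(\mu_{n})}(\theta)=\psi_{\mu_{n}}(\log\widehat{\mu_{L}}(\theta))\to\psi_{\mu}(\log\widehat{\mu_{L}}(\theta))=\log\widehat{\Lambda_{\mu_{L}}(\mu)}(\theta)$ pointwise in $\theta$. The pointwise limit of the characteristic functions is the characteristic function of $\Lambda_{\mu_{L}}(\mu)$, which is continuous at $0$, so L\'{e}vy's continuity theorem yields $\Lambda_{\mu_{L}}(\mu_{n})\to\Lambda_{\mu_{L}}(\mu)$ weakly.

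For $\Lambda_{\mu_{T}}$, fix $\mu_{T}\in ID(\mathbb{R})$ with support on $\mathbb{R}^{+}$, pair $(\beta_{0},\rho)_{0}$, and suppose $\mu_{L}^{n}\to\mu_{L}$ weakly in $ID(\mathbb{R})=\mathcal{D}(\Lambda_{\mu_{T}})$. Then $\widehat{\mu_{L}^{n}}(\theta)\to\widehat{\mu_{L}}(\theta)$ uniformly on compacts, hence $\log\widehat{\mu_{L}^{n}}(\theta)\to\log\widehat{\mu_{L}}(\theta)$ for each $\theta$ (the logarithm of an ID characteristic function is single-valued and continuous, being $=\psi(z,\cdot)$ in the notation of the L\'{e}vy--Khintchine formula, and never vanishes). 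Again the arguments stay in $\{\mathrm{Re}(z)\leq 0\}$; since $\psi_{\mu_{T}}$ is a fixed continuous function on that half-plane, $\log\widehat{\Lambda_{\mu_{T}}(\mu_{L}^{n})}(\theta)=\psi_{\mu_{T}}(\log\widehat{\mu_{L}^{n}}(\theta))\to\psi_{\mu_{T}}(\log\widehat{\mu_{L}}(\theta))=\log\widehat{\Lambda_{\mu_{T}}(\mu_{L})}(\theta)$ pointwise, and L\'{e}vy's continuity theorem finishes the argument exactly as before.

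The main obstacle I anticipate is justifying that weak convergence of the $\mu_{n}$ (resp. $\mu_{L}^{n}$) genuinely transfers to convergence of $\psi_{\mu_{n}}$ (resp. of the composed cumulants) on the relevant part of the complex plane, rather than merely on the imaginary axis. For $\Lambda_{\mu_{L}}$ one needs that weak convergence of probability measures on $\mathbb{R}^{+}$ implies convergence of Laplace transforms on all of $\{\mathrm{Re}(z)\leq 0\}$ — this is standard (e.g.\ Feller) because $x\mapsto e^{zx}$ is bounded and continuous on $[0,\infty)$ when $\mathrm{Re}(z)\leq 0$, so it is really just the portmanteau theorem applied to this test function; I would spell this out. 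A subtler point, only needed if one wants continuity of $\Lambda$ jointly rather than sectionwise, would be uniformity of $\psi_{\mu_{T}}$ as $\mu_{T}$ also varies, but since the statement concerns only the sections $\Lambda_{\mu_{L}}$ and $\Lambda_{\mu_{T}}$ this does not arise. A final minor check is that the limit is indeed in the relevant domain ($\mathcal{D}(\Lambda_{\mu_{L}})=ID(\mathbb{R}^{+})$ is closed under weak limits, since weak limits of subordinator laws are again supported on $\mathbb{R}^{+}$ and infinitely divisible), so the statement ``$\Lambda_{\mu_{L}}(\mu)$'' makes sense.
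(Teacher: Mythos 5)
Your proof is correct and follows essentially the same route as the paper: both arguments rest on the representation $\log\widehat{\Lambda_{\mu_{L}}(\mu)}(\theta)=\psi_{\mu}\bigl(\log\widehat{\mu_{L}}(\theta)\bigr)$ from (\ref{eqn3.3}), apply the continuity theorem to get pointwise convergence of the composed cumulants, and handle $\Lambda_{\mu_{T}}$ by interchanging the roles of the two arguments. You simply spell out details the paper leaves implicit (convergence of Laplace transforms on the closed left half-plane via bounded continuous test functions, and closedness of $ID(\mathbb{R}^{+})$ under weak limits), which is welcome but not a different method.
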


\begin{proof}
Suppose that $\mu _{L}\in ID\left( \mathbb{R}\right) $ and let $\mu \in 
\mathcal{D}\left( \Lambda _{\mu _{L}}\right) $. Then $\Lambda _{\mu
_{L}}\left( \mu \right) $ has characteristic exponent given by (\ref{eqn3.3}%
). Let $\left( \mu _{n}\right) _{n\geq 1}\subset \mathcal{D}\left( \Lambda
_{\mu _{L}}\right) $, such that $\mu _{n}\rightarrow \mu _{\infty }$ weakly.
From the previous proposition $\mu _{\infty }\in \mathcal{D}\left( \Lambda
_{\mu _{L}}\right) $. Thus, due to the Continuity Theorem, $\psi _{\mu
_{n}}\rightarrow \psi _{\mu _{\infty }}$ pointwise$,$ where $\psi _{\mu }$
is as in (\ref{eqn3.5}). The continuity of $\Lambda _{\mu _{L}}$ follows by
applying the previous observations to (\ref{eqn3.3}). Finally, by
interchanging the roles in the previous reasoning, it follows that $\Lambda
_{\mu _{T}}$ is continuous as well.
\end{proof}

From Propositions \ref{Proplinearlambda} and \ref{Propcontinuitylambda}, for
a given $\mu _{L}\in ID\left( \mathbb{R}\right) $, $\Lambda _{\mu _{L}}$ is
continuous and linear with respect to the convolution operation. Moreover,
Corollary \ref{injectprob} and (\ref{eqn3.2.1}) show that $\Lambda _{\mu
_{L}}$ is one-to-one. Hence, we have obtained the following result:

\begin{theorem}
\label{Theoremrecovery}Let $\mu _{L}\in ID\left( \mathbb{R}\right) $ be
given. If $\mu _{L}$ is not the Dirac's delta measure at zero, then $\Lambda
_{\mu _{L}}$ is a \textit{continuous group isomorphism}. Moreover, $\Lambda
_{\mu _{L}}\left( ID\left( \mathbb{R}\right) \right) \subsetneq ID\left( 
\mathbb{R}\right) $.
\end{theorem}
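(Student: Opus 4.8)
The plan is to assemble Theorem \ref{Theoremrecovery} from the pieces already established, treating it as a bookkeeping result rather than a new computation. Recall that a continuous group isomorphism here means: (i) $\Lambda_{\mu_L}$ maps $ID(\mathbb{R}^+)$ (which is a semigroup under convolution with identity $\delta_0$) into $ID(\mathbb{R})$; (ii) it is a homomorphism, i.e.~$\Lambda_{\mu_L}(\mu^1 \ast \mu^2) = \Lambda_{\mu_L}(\mu^1)\ast \Lambda_{\mu_L}(\mu^2)$; (iii) it is injective; (iv) it is continuous for weak convergence. Items (i), (ii) and (iv) are already in hand: (i) and (ii) are exactly Proposition \ref{Proplinearlambda} (with $\mathcal{D}(\Lambda_{\mu_L}) = ID(\mathbb{R}^+)$), and (iv) is Proposition \ref{Propcontinuitylambda}. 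So the only genuine content left is injectivity and the strictness of the inclusion $\Lambda_{\mu_L}(ID(\mathbb{R}^+)) \subsetneq ID(\mathbb{R})$.

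For injectivity I would argue as follows. By Remark \ref{Remark3.2}, specifically \eqref{eqn3.2.1}, for $\mu_T \in ID(\mathbb{R}^+)$ we have $\Lambda_{\mu_L}(\mu_T) = \Phi_{\mu_L}(\mu_T)$, where on the right $\mu_T$ is regarded as a probability measure on $\mathbb{R}^+$. Hence $\Lambda_{\mu_L}$ restricted to $ID(\mathbb{R}^+)$ is nothing but $\Phi_{\mu_L}$ restricted to $ID(\mathbb{R}^+) \subset \mathcal{P}(\mathbb{R}^+)$. Now Corollary \ref{injectprob} states that $\Phi_{\mu_L}$ is one-to-one on $\mathcal{P}(\mathbb{R}^+)$ whenever $\mu_L$ is non-degenerate, and the hypothesis that $\mu_L$ is not $\delta_0$ together with the convention (adopted after the Remark following Proposition \ref{popdomrange}) that we only consider non-degenerate laws gives exactly that. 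Therefore $\Lambda_{\mu_L}$ is injective on its domain. One subtle point I should address explicitly: Corollary \ref{injectprob} is phrased for $\Phi_\mu$ on $\mathcal{P}(\mathbb{R}^+)$, and I must make sure the identification $\Lambda_{\mu_L}(\mu_T) = \Phi_{\mu_L}(\mu_T)$ is literally the distributional identity $\mathcal{L}(L_{T_1}) = \int_0^\infty \mu_L^s(\cdot)\,\mu_T(ds)$; this is precisely \eqref{eqn2.0}/\eqref{eqn3.2.1}, so no extra work is needed, but I will spell out that two infinitely divisible laws with equal law are equal as measures and hence have equal characteristic triplets.

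For the strictness $\Lambda_{\mu_L}(ID(\mathbb{R}^+)) \subsetneq ID(\mathbb{R})$, I would exhibit a law in $ID(\mathbb{R})$ not of the form $\overline{\mu}$ in \eqref{eqn3.2}. The cleanest route is to reuse the argument behind part 2 of Proposition \ref{popdomrange} and the examples already given: the L\'evy measure of any $\overline{\mu}$ in the range has, via its second summand $\int_0^\infty \mu_L^s(dx)\,\rho(ds)$, regularity inherited from $\mu_L^s$ (absolute continuity if $\mu_L$ has a density), so one picks a target law whose L\'evy measure fails this — for instance, if $\mu_L$ is continuous, any infinitely divisible law with an atom in its L\'evy measure cannot lie in the range; and if $\mu_L$ is a compound Poisson or more generally atomic, one can instead take a target with a nonzero Gaussian part not matching $b\beta_0$, or argue via the support of $\overline{\nu}$. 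I expect the main obstacle to be stating this strictness cleanly in a way that works uniformly over all admissible $\mu_L$, since the structural obstruction changes character depending on whether $\mu_L$ has a Gaussian component, is atomic, or has a density; I would therefore split into a couple of cases or, more economically, invoke Proposition \ref{popdomrange}(2) together with Remark \ref{Remark3.1} to conclude that the range of $\rho \mapsto \overline{\nu}$ (hence of $\Lambda_{\mu_L}$) is a proper subset, which is really all that is claimed. A one-line check that $\Lambda_{\mu_L}(\delta_0) = \delta_0$ (the identity goes to the identity) completes the verification that it is a group homomorphism in the pointed sense.
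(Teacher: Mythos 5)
Your proposal is correct and follows essentially the same route as the paper: the authors likewise obtain the homomorphism property and continuity from Propositions \ref{Proplinearlambda} and \ref{Propcontinuitylambda}, deduce injectivity from Corollary \ref{injectprob} via the identification (\ref{eqn3.2.1}), and take the strictness of the range inclusion from Proposition \ref{Proplinearlambda}. Your extra care about the identification $\Lambda_{\mu_L}=\Phi_{\mu_L}$ on $ID(\mathbb{R}^{+})$ and the case discussion for strictness only makes explicit what the paper leaves implicit.
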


To conclude this part we present the injectivity of $\Lambda _{\mu _{T}}$.

\begin{theorem}
Let $\mu _{T}\in ID\left( \mathbb{R}^{+}\right) $ be given. If $\mu _{T}$ is
not the Dirac delta measure at zero, then $\Lambda _{\mu _{T}}$ is
one-to-one. Moreover, $\Lambda _{\mu _{T}}\left( ID\left( \mathbb{R}\right)
\right) \subsetneq ID\left( \mathbb{R}\right) $.
\end{theorem}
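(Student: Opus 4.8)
The plan is to pass to characteristic exponents and then invert. By \eqref{eqn3.3} (used with $\mu_T$ in the role of the subordinating law, so that $\Lambda_{\mu_T}(\mu_L)=\Lambda_{\mu_L}(\mu_T)$), one has $\log\widehat{\Lambda_{\mu_T}(\mu_L)}(\theta)=\psi_{\mu_T}(\phi_L(\theta))$, where $\psi_{\mu_T}$ is as in \eqref{eqn3.5} and $\phi_L$ is the characteristic exponent of $\mu_L$; as both sides are the continuous logarithm vanishing at $\theta=0$, the hypothesis $\Lambda_{\mu_T}(\mu_L^1)=\Lambda_{\mu_T}(\mu_L^2)$ is equivalent to $\psi_{\mu_T}(\phi_1(\theta))=\psi_{\mu_T}(\phi_2(\theta))$ for all $\theta\in\mathbb R$, where $\phi_i$ is the exponent of $\mu_L^i$. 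Set $g(z):=\int_0^\infty e^{zx}\mu_T(\mathrm dx)=e^{\psi_{\mu_T}(z)}$; then $g$ is holomorphic on $\{\mathrm{Re}\,z<0\}$, continuous on $\{\mathrm{Re}\,z\le0\}$, $g(0)=1$, $g(\bar z)=\overline{g(z)}$, and, since $\mu_T\neq\delta_0$ charges $(0,\infty)$, $\lambda\mapsto g(-\lambda)$ is strictly decreasing; equivalently, $\Psi_T(q):=-\psi_{\mu_T}(-q)$ is a strictly increasing Bernstein function --- the Laplace exponent of the subordinator $T$ with $\mathcal L(T_1)=\mu_T$.

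I would first separate two clean regimes. If $\mu_T=\delta_{\beta_0}$ ($\beta_0>0$), then $\widehat{\Lambda_{\mu_T}(\mu_L)}=\widehat{\mu_L}^{\,\beta_0}$, manifestly injective in $\mu_L$. Next, if $\overline\mu:=\Lambda_{\mu_T}(\mu_L^1)=\Lambda_{\mu_T}(\mu_L^2)$ is supported on a half-line --- which, by \eqref{eqn3.2} and a short argument splitting $T_1$ into i.i.d.\ increments, happens exactly when both $\mu_L^i$ lie in $ID(\mathbb R^+)$ up to a common sign, in particular when some $\mu_L^i$ is a Dirac mass --- then each $L^i$ is a subordinator, and the one-sided recovery $\mathbb E[e^{-qL^i_{T_1}}]=e^{-\Psi_T(\Phi_{L^i}(q))}$ together with the strict monotonicity of $\Psi_T$ determines $\Phi_{L^i}$, hence $\mu_L^i$; so $\mu_L^1=\mu_L^2$.

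The substantive case is $\mu_T$ non-degenerate and $\overline\mu$ genuinely two-sided, so both $\mu_L^i$ are non-degenerate and, as in the proof of Theorem \ref{Thmcompoundpoisson}, the range of each $\phi_i$ is a non-degenerate connected subset of $\{\mathrm{Re}\,z\le0\}$ through $0$ meeting $\{\mathrm{Re}\,z<0\}$. I would show that $E:=\{\theta\in\mathbb R:\phi_1(\theta)=\phi_2(\theta)\}$ --- closed, symmetric, containing $0$ --- is all of $\mathbb R$, which forces $\widehat{\mu_L^1}=\widehat{\mu_L^2}$ and $\mu_L^1=\mu_L^2$. Since $g$ is injective on some relatively open neighbourhood of $0$ in $\{\mathrm{Re}\,z\le0\}$ (immediate from $g'(0)=\mathbb E[T_1]\neq0$ when $\mathbb E[T_1]<\infty$; in general via the Bernstein structure of $\Psi_T$, e.g.\ a Noshiro--Warschawski-type univalence estimate on a small half-disk) and $\phi_i(\theta)\to0$ as $\theta\to0$, the identity puts a neighbourhood of $0$ inside $E$. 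To see $E$ is open, at $\theta_0\in E$ with $z_0:=\phi_1(\theta_0)=\phi_2(\theta_0)$: if $\mathrm{Re}\,z_0<0$ and $g'(z_0)\neq0$, then $g$ is locally injective at $z_0$ and a neighbourhood of $\theta_0$ lies in $E$; the remaining possibilities ($\mathrm{Re}\,z_0=0$, or $z_0$ a critical point of $g$) I would treat by taking real parts --- $|g(\phi_1(\theta))|=|g(\phi_2(\theta))|$ --- and combining $\mathrm{Re}\,\phi_1=\mathrm{Re}\,\phi_2$ on $E\cap[0,\theta_0]$ with the strict monotonicity of $\Psi_T$ to rule out a branching of $\phi_1,\phi_2$ at $\theta_0$. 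Then $E$ is clopen and nonempty, hence $E=\mathbb R$.

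I expect this last point to be the main obstacle. Unlike $\Psi_T$ on $(0,\infty)$, the complex map $\psi_{\mu_T}$ need not be univalent on $\{\mathrm{Re}\,z\le0\}$ --- already for a drift-plus-compound-Poisson subordinator it has critical points in the open left half-plane --- so one cannot simply ``invert'' it along the curve $\theta\mapsto\phi_i(\theta)$; the argument must genuinely use the rigidity of characteristic exponents (evenness of $\mathrm{Re}\,\phi_i$, oddness of $\mathrm{Im}\,\phi_i$, and the fact that $\phi_i$ is the logarithm of an infinitely divisible characteristic function) to prevent $\phi_1$ and $\phi_2$ from separating where $\psi_{\mu_T}$ fails to be locally one-to-one. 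Finally, the inclusion $\Lambda_{\mu_T}(ID(\mathbb R))\subsetneq ID(\mathbb R)$ for non-degenerate $\mu_T$ follows from \eqref{eqn3.2}: $\Lambda_{\mu_T}(\mu_L)$ has L\'{e}vy measure $\overline\nu=\beta_0\nu+\Phi_{\mu_L}(\rho)$ with $\rho\neq0$, and by the second part of Proposition \ref{popdomrange} any infinitely divisible law whose L\'{e}vy measure cannot be represented in this way --- for instance one carrying an atom in its L\'{e}vy measure, while $\mu_L$ ranges over laws with a density --- lies outside the range.
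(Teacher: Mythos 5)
Your reduction of the injectivity claim to the functional equation $g(\phi_{1}(\theta))=g(\phi_{2}(\theta))$, where $g(z)=\int_{0}^{\infty}e^{sz}\mu_{T}(\mathrm{d}s)$, is a correct starting point, and your treatment of the degenerate and one-sided cases is fine. The problem is that the central case is not actually proved. Your clopen argument for $E=\{\theta:\phi_{1}(\theta)=\phi_{2}(\theta)\}$ only establishes openness at parameters $\theta_{0}$ for which $z_{0}=\phi_{1}(\theta_{0})$ lies in the open half-plane and $g^{\prime}(z_{0})\neq 0$; at critical points of $g$, and at points where the exponents reach the imaginary axis, you offer only a sketch, and the sketched patch does not work as stated: from $|g(\phi_{1}(\theta))|=|g(\phi_{2}(\theta))|$ you cannot conclude $\mathrm{Re}\,\phi_{1}=\mathrm{Re}\,\phi_{2}$, because $\log|g(z)|=\mathrm{Re}\,\psi_{\mu_{T}}(z)$ depends on $\mathrm{Im}\,z$ as well as on $\mathrm{Re}\,z$ (only the inequality $|g(z)|\leq g(\mathrm{Re}\,z)$ is available), so the strict monotonicity of $\Psi_{T}$ on the real axis gives you nothing at such points. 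You identify this yourself as ``the main obstacle'' and leave it open, so what you have is a programme for the hard case rather than a proof.

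For comparison, the paper does not attempt to invert $\psi_{\mu_{T}}$ along the curves $\phi_{i}$ at all; it argues probabilistically by conditioning on the subordinator. Writing $\mathbb{E}\left(e^{i\theta L_{T_{1}}}\mid T\right)=e^{T_{1}\log\widehat{\mu}(\theta)}$ and likewise for $\widetilde{\mu}$, it passes to the almost sure identity $e^{T_{1}(\log\widehat{\mu}(\theta)-\log\widehat{\widetilde{\mu}}(\theta))}=1$, using that $T_{1}>0$ a.s.\ when the L\'evy measure of $\mu_{T}$ is infinite near zero, and replacing $T_{1}$ by the (a.s.\ positive) value of $T$ at its first jump time in the finite-activity case; the continuity of $\log\widehat{\mu}-\log\widehat{\widetilde{\mu}}$ and its vanishing at $\theta=0$ then force $\mu=\widetilde{\mu}$. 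This sidesteps entirely the non-univalence of $g$ that blocks your argument. Two further points: your argument for $\Lambda_{\mu_{T}}\left(ID\left(\mathbb{R}\right)\right)\subsetneq ID\left(\mathbb{R}\right)$ is also incomplete, since a law outside the range must be excluded for \emph{every} $\mu_{L}\in ID\left(\mathbb{R}\right)$, not only for those with a density; and you are right to treat $\mu_{T}=\delta_{\beta_{0}}$ separately, but note that in that case $\Lambda_{\mu_{T}}(\mu_{L})=\mu_{L}^{\beta_{0}}$ is onto $ID\left(\mathbb{R}\right)$, so the strict-inclusion part of the statement genuinely requires $\mu_{T}$ to be non-degenerate.
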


\begin{proof}
Assume that $\mu _{T}\in ID\left( \mathbb{R}^{+}\right) $ is not the Dirac
delta measure at zero and let $\mu ,\widetilde{\mu }\in ID\left( \mathbb{R}%
\right) $ such that $\Lambda _{\mu _{T}}\left( \mu \right) =\Lambda _{\mu
_{T}}\left( \widetilde{\mu }\right) $. Define by $L,\widetilde{L}$ and $T$
the L\'{e}vy processes associated with $\mu ,\widetilde{\mu }$ and $\mu _{T}$%
, respectively. Since $\mu _{T}\in ID\left( \mathbb{R}^{+}\right) $ and is
not the Dirac delta measure at zero, $T$ is a non-zero subordinator. Suppose
that the L\'{e}vy measure of $\mu _{T}$ is infinity in a neighborhood of
zero, then $T_{t}>0$ almost surely for any $t>0$. Consequently, by
independence, for all $t>0$%
\begin{eqnarray*}
\mathbb{E}\left( \left. e^{i\theta L_{T_{1}}}\right\vert T\right)
&=&e^{T_{1}\log \widehat{\mu }}; \\
\mathbb{E}\left( \left. e^{i\theta \widetilde{L}_{T_{1}}}\right\vert
T\right) &=&e^{T_{1}\log \widehat{\widetilde{\mu }}}.
\end{eqnarray*}%
But since $\Lambda _{\mu _{T}}\left( \mu \right) =\Lambda _{\mu _{T}}\left( 
\widetilde{\mu }\right) ,$ we have that almost surely 
\begin{equation}
e^{T_{1}\log \widehat{\mu }}=e^{T_{1}\log \widehat{\widetilde{\mu }}}\text{.}
\label{eqn3.6}
\end{equation}%
From this, it is immediate that $\mu =\widetilde{\mu }$. If the L\'{e}vy
measure of $\mu _{T}$ is finite, then the first jump time of $T$ is
finite almost surely. Following the same reasoning as before, we get that (%
\ref{eqn3.6}) holds for $T_{e_{1}}$ instead of $T_{1},$ where $e_{1}$ is the
first jump time of $T$. This concludes the proof.
\end{proof}

\section{On the recovery problem of time-changed infinitely divisible
processes}

In this section we study the recovery problem of subordinated L\'{e}vy bases
and $\mathcal{LSS}$ processes driven by a subordinated L\'{e}vy process.

Let $\left( L_{t}\right) _{t\geq 0}$ and $\left( T_{t}\right) _{t\geq 0}$ be
a L\'{e}vy process and an increasing process starting at zero and
independent of $L$ with c\`{a}dl\`{a}g paths, respectively. Consider $\left(
X_{t}\right) _{t\geq 0}$ given by 
\begin{equation}
X_{t}:=L_{T_{t}}\text{, \ \ }t\geq 0,  \label{eqn4.1}
\end{equation}%
which is the process obtained by time changing $L$ via $T$. These kind of
processes are of importance in mathematical finance since they are often
used to model the return process of a financial asset. In this context, when 
$L$ is a Brownian motion, $T$ plays the role of \textit{stochastic volatility%
}. Since in reality $T$ cannot be observed, a very important question
arises: Is there a way to \textit{recover} $T$ from $X$? More generally, is $%
T$ \textit{completely determined} by $X$, either in t\textit{he path-wise
sense} or \textit{in the distributional sense}? This issue is what we refer
to as the \textit{recovery problem} of time-changed L\'{e}vy processes.

Observe that when $L$ is a standard Brownian motion and $T$ is continuous
the recovery problem is trivial, because in this case $X$ is continuous and 
\begin{equation*}
T_{t}=\left[ X\right] _{t}\text{, \ \ a.s. for all }t>0\text{.}
\end{equation*}

\cite{WinMaT01} studied the problem when $L$ is a L\'{e}vy process (but not
compound Poisson) and $T$ being continuous. In this case $T$ can be
expressed in terms of the L\'{e}vy measure of $L$, the quadratic variation
of $X$ as well as its jumps.

\cite{GemMAdYor02} studied the conditional law of $X$ when $L$ is a Brownian
motion and $T$ is a surbordinator with pair $\left( \beta _{0},\rho \right)
_{0}$. They showed that 
\begin{equation*}
\mathbb{E}\left( \left. e^{-\lambda T_{t}}\right\vert \mathcal{F}_{t}\right)
=e^{-t\psi \left( \lambda \right) }M_{t}\left( \lambda ,\rho ,X\right) ,%
\text{ \ \ }\lambda ,t>0,
\end{equation*}%
where $\psi $ is the characteristic exponent of the Laplace transform of $T$%
, $\left( \mathcal{F}_{t}\right) _{t\geq 0}$ is the natural filtration of $X$
and $M_{t}\left( \lambda ,\rho ,X\right) $ is certain martingale depending
only on $\lambda ,$ $\rho $ and $\left( X_{s}\right) _{0\leq s\leq t}$. \cite%
{WinMaT01} improved this result to include any L\'{e}vy process for which
its associated law has density. In this case 
\begin{equation*}
\mathbb{E}\left( \left. e^{-\lambda T_{t}}\right\vert \mathcal{F}_{t}\right)
=e^{-t\psi \left( \lambda \right) }M_{t}\left( \lambda ,\rho ,X,\mu \right) ,%
\text{ \ \ }\lambda ,t>0,
\end{equation*}%
with $M_{t}\left( \lambda ,\rho ,X,\mu \right) $ certain martingale
depending only on $\lambda $, $\rho $, $\left( X_{s}\right) _{0\leq s\leq t}$
and $\mu \in ID\left( \mathbb{R}\right) ,$ the law of $L$ at time $1$.
Furthermore, the author shows that if $T$ is purely discontinuous, then $X$
and $T$ jumps at the same time, and%
\begin{equation*}
\mathbb{P}\left( \left. \Delta T_{t}\in dz\right\vert S=t,\Delta
X_{t}=y\right) =\frac{1}{c\left( \mu ,\rho \right) }f\left( y,z\right) \rho
\left( dz\right) ,
\end{equation*}%
where $c\left( \mu ,\rho \right) $ is a positive constant depending on the L%
\'{e}vy measure of $T$ and the law of $L$.

Nevertheless, in all the cases above, \textit{it is implicitly assumed that
the triplet of }$T$\textit{\ can be obtained by }$X$. What we have shown in
the previous section is that the triplet of $T$ is uniquely determined by $X$
for every non-zero L\'{e}vy process.

For the rest of the section, we study the distributional recovery problem
for subordinated L\'{e}vy bases and for the class of L\'{e}vy semistationary
processes.

\subsection{The recovery problem of subordinated L\'{e}vy bases}

In this part we study the recovery problem for subordinated L\'{e}vy bases.
It turned out that the non-homogeneous case is deducted directly from the
homogeneous case.

Recall that any arbitrary L\'{e}vy basis $T$ on $\mathbb{R}^{k}$ with
characteristic quadruplet $\left( \beta \left( s\right) ,b\left( s\right)
,\nu \left( s,\cdot \right) ,c\left( s\right) \right) $ is a measure-valued
field if and only if for $c$-almost all $s,$ $b\left( s\right) =0,$ $\nu
\left( s,\left( -\infty ,0\right) \right) =0$, the integral $\int_{\mathbb{R}%
^{+}}\left( 1\wedge x\right) \nu \left( s,dx\right) $ is finite and $\beta
\left( s\right) \geq \int_{0}^{1}x\nu \left( s,dx\right) $ (see Section 2).
This is equivalent to saying that the laws of the L\'{e}vy seeds $\mathbb{T=}%
\left( T^{\prime }\left( s\right) \right) _{s\in \mathbb{R}^{k}}$, say, $%
M=\left( \mu _{s}\right) _{s\in \mathbb{R}^{k}},$ have support on $\left[
0,\infty \right) $. Let $L$ be a homogeneous L\'{e}vy basis with
characteristic triplet $\left( \gamma ,b,\nu \right) $ and $T$ as before.
Put $L_{T}$ to be as in (\ref{eqn1.9.0}), i.e. $L_{T}$ is the L\'{e}vy basis
subordinated by the meta-time associated with $T$. Within this framework, $T$
is uniquely determined by $L_{T}$ in law, as the following theorem shows:

\begin{theorem}
\label{TheoremLBtimechange}Under the notation above, if $\mu \in ID\left( 
\mathbb{R}\right) $ has characteristic triplet $\left( \gamma ,b,\nu \right)
,$ and it is not the Dirac delta measure at zero, then the law of $L_{T}$ is
determined uniquely by the law of $T$, i.e. if there exists a non-negative L%
\'{e}vy basis $\widetilde{T}$, such that $L_{T}\overset{d}{=}L_{\widetilde{T}%
}$, then necessarily $T\overset{d}{=}\widetilde{T}$.
\end{theorem}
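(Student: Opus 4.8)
The plan is to reduce the statement about L\'{e}vy bases to the already established injectivity results for the mapping $\Phi_{\mu}$ on infinitely divisible laws supported on $[0,\infty)$. The key observation is that, by Theorem~\ref{TheoremcqsubLB}, subordinating the homogeneous L\'{e}vy basis $L$ (with triplet $(\gamma,b,\nu)$, i.e.\ with associated ID law $\mu$) by the non-negative L\'{e}vy basis $T$ with characteristic quadruplet $(\gamma_T(s),0,\rho(s,\cdot),c)$ produces a L\'{e}vy basis $L_T$ whose characteristic quadruplet at the spatial point $s$ is given in terms of $\beta_0(s)\nu + \Phi_{\mu}(\rho(s,\cdot))$ and the corresponding drift. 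So the whole distributional content of $L_T$ at each $s$ is encoded, through the L\'{e}vy seeds, by the pointwise application of $\Phi_{\mu}$ (really of $\Lambda_{\mu}$ from Section~4) to the seed law $\mu_s$ of $T$.

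The steps, in order, are as follows. First I would write down, using Theorem~\ref{TheoremcqsubLB}, the characteristic quadruplet $(\overline{\gamma},\overline{b},\overline{\nu},\overline{c})$ of $L_T$ and of $L_{\widetilde{T}}$ explicitly in terms of the quadruplets of $T$ and $\widetilde{T}$. From $L_T \overset{d}{=} L_{\widetilde{T}}$, the uniqueness of the characteristic quadruplet of a L\'{e}vy basis forces $\overline{c} = \overline{\widetilde{c}}$, hence (by part 1 of Theorem~\ref{TheoremcqsubLB}) $c = \widetilde{c}$; and it forces, for $c$-almost every $s$, the equality of the seed triplets, i.e.\ $\overline{\nu}(s,\cdot) = \overline{\widetilde{\nu}}(s,\cdot)$, $\overline{b}(s) = \overline{\widetilde{b}}(s)$, $\overline{\gamma}(s) = \overline{\widetilde{\gamma}}(s)$. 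Translating this via the formulas of Theorem~\ref{TheoremcqsubLB} gives, for $c$-a.e.\ $s$, that the ID law $\mu_s$ (the seed of $T$ at $s$, supported on $[0,\infty)$) and $\widetilde{\mu}_s$ (the seed of $\widetilde{T}$ at $s$) satisfy $\Lambda_{\mu}(\mu_s) = \Lambda_{\mu}(\widetilde{\mu}_s)$, where I have absorbed the $\beta_0(s)$-term and the drift correction into the statement that the full subordinated seed laws coincide. Second, I would invoke Theorem~\ref{Theoremrecovery}: since $\mu$ is not the Dirac delta at zero, $\Lambda_{\mu_L} = \Lambda_{\mu}$ is injective (a continuous group isomorphism onto its range), so $\mu_s = \widetilde{\mu}_s$ for $c$-a.e.\ $s$. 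Finally, since the L\'{e}vy seeds together with the control measure $c$ determine the law of a L\'{e}vy basis uniquely, and we have matched both, we conclude $T \overset{d}{=} \widetilde{T}$.

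The main obstacle is the bookkeeping in the first step: one must be careful that the equality of the \emph{seed triplets} of $L_T$ and $L_{\widetilde{T}}$ (with respect to the truncation function $\tau$) really is equivalent to $\Lambda_{\mu}(\mu_s) = \Lambda_{\mu}(\widetilde{\mu}_s)$ as ID laws on $\mathbb{R}$, and in particular that no information is lost when passing between the $(\overline{\gamma},\overline{b},\overline{\nu})$ description and the distributional description; this uses that $\mu_s$ is supported on $[0,\infty)$ so that the drift term $\beta_0(s) = \gamma_T(s) - \int_0^\infty x\,\rho(s,dx)$ is well-defined and non-negative, matching the setup of Section~2. A secondary point to handle cleanly is the $c$-almost-everywhere qualifier: one must ensure that modifying the seeds on a $c$-null set does not affect the law of $T$, which is standard since $c$ is the control measure. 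Once these measurability and null-set issues are dispatched, the injectivity of $\Lambda_{\mu}$ does all the real work, and the non-homogeneous structure of $T$ (the $s$-dependence of its seeds) is handled simply by applying the pointwise result for $c$-a.e.\ $s$.
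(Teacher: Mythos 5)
Your proposal is correct and follows essentially the same route as the paper: identify the seed law of $L_T$ at each point $s$ as $\Lambda_{\mu}(\mu_s)$ via Theorem~\ref{TheoremcqsubLB} and (\ref{eqn3.2.1}), match control measures and seeds $c$-almost everywhere from $L_T \overset{d}{=} L_{\widetilde{T}}$, and then apply the injectivity of $\Lambda_{\mu}$ from Theorem~\ref{Theoremrecovery} to conclude $\mu_s = \widetilde{\mu}_s$ for $c$-a.e.\ $s$, hence $T \overset{d}{=} \widetilde{T}$. The bookkeeping concerns you flag (equivalence of seed-triplet equality with equality of the subordinated seed laws, and the $c$-null-set qualifier) are handled implicitly in the paper in exactly the way you describe.
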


\begin{proof}
Let $T$ and $\widetilde{T}$ be two non-negative L\'{e}vy bases and $\left(
\beta \left( s\right) ,0,\nu \left( s,\cdot \right) ,c\left( s\right)
\right) $,$(\widetilde{\beta }\left( s\right) ,0,\widetilde{\nu }\left(
s,\cdot \right) ,\widetilde{c}\left( s\right) ),$ their corresponding
characteristic quadruplets. Denote by $M=\left( \mu _{s}\right) _{s\in 
\mathbb{R}^{k}}$ the collection of infinitely divisible laws related to the L%
\'{e}vy seeds of $T$, this is $\mu _{s}=\mathcal{L}\left( T^{\prime }\left(
s\right) \right).$ Define $\widetilde{M}=\left( \widetilde{\mu }_{s}\right)
_{s\in \mathbb{R}^{k}}$ in an analogous way. From the observation made at
the beginning of this subsection, we get that $M,\widetilde{M}\subset
ID\left( \mathbb{R}^{+}\right) .$ Now, suppose that $L_{T}\overset{d}{=}L_{%
\widetilde{T}}.$ Due to Theorem \ref{TheoremcqsubLB}, it follows that $c=%
\widetilde{c}$. Hence, $T\overset{d}{=}\widetilde{T}$ if and only if $\mu
_{s}=\widetilde{\mu }_{s}$ for $c$-almost all $s\in \mathbb{R}^{k}$. Let us
show that this holds whenever $L_{T}\overset{d}{=}L_{\widetilde{T}}$.

Since $L_{T}$ is a L\'{e}vy basis, there is an associated collection of
infinitely divisible laws $\left( \overline{\mu }_{s}\right) _{s\in \mathbb{R%
}^{k}}$, say, on $\mathbb{R}$, such that $\overline{\mu }_{s}=\mathcal{L}%
\left( L_{T}^{\prime }\left( s\right) \right) $, where $L_{T}^{\prime
}\left( s\right) $ is the infinitely divisible random variable with
characteristic triplet $\left( \overline{\gamma }\left( s\right) ,\overline{b%
}\left( s\right) ,\overline{\nu }\left( s,dx\right) \right) $, with $%
\overline{\gamma }\left( s\right) ,\overline{b}\left( s\right) $ and $%
\overline{\nu }\left( s,dx\right) $ as in Theorem \ref{TheoremcqsubLB}. An
analogous expression holds for $L_{\widetilde{T}}$ and its associated L\'{e}%
vy seeds $L_{\widetilde{T}}^{\prime }\left( s\right) $. By hypothesis $%
L_{T}^{\prime }\left( s\right) \overset{d}{=}L_{\widetilde{T}}^{\prime
}\left( s\right) $ for $c$-almost all $s\in \mathbb{R}^{k}.$ Moreover, from (%
\ref{eqn3.2}) and (\ref{eqn3.2.1}), for all $s\in \mathbb{R}^{k}$%
\begin{equation*}
\mathcal{L}\left( L_{T}^{\prime }\left( s\right) \right) =\Lambda _{\mu
}\left( \mu _{s}\right) =\Phi _{\mu }\left( \mu _{s}\right) \text{,}
\end{equation*}%
and 
\begin{equation*}
\mathcal{L}\left( L_{\widetilde{T}}^{\prime }\left( s\right) \right)
=\Lambda _{\mu }\left( \widetilde{\mu }_{s}\right) =\Phi _{\mu }\left( 
\widetilde{\mu }_{s}\right) .
\end{equation*}%
Therefore, for $c$-almost all $s\in \mathbb{R}^{k}$%
\begin{equation*}
\Lambda _{\mu }\left( \mu _{s}\right) =\Lambda _{\mu }\left( \widetilde{\mu }%
_{s}\right) \text{,}
\end{equation*}%
which, once one applies Theorem \ref{Theoremrecovery}, implies that $\mu
_{s}=\widetilde{\mu }_{s}$ for $c$-almost all $s\in \mathbb{R}^{k}$, just as
it was claimed.
\end{proof}

Due to the bijection between natural additive processes and L\'{e}vy bases
on $\mathbb{R}^{+}$, the previous theorem implies the following:

\begin{corollary}
Let $\left( L_{t}\right) _{t\geq 0}$ be a L\'{e}vy process and $\left(
T_{t}\right) _{t\geq 0}$ an increasing additive process. Then the law of $%
\left( T_{t}\right) _{t\geq 0}$ is completely determined by the law of the
time-changed process $\left( X_{t}\right) _{t\geq 0}$, where $X_{t}$ is as
in (\ref{eqn4.1}).
\end{corollary}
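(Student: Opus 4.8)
The plan is to recast the statement in the language of L\'{e}vy bases on $\mathbb{R}^{+}$ and invoke Theorem \ref{TheoremLBtimechange}; this is essentially the one-line argument announced just before the corollary, but it is worth spelling out the two translations involved. Recall from Section 2 the bijections between L\'{e}vy processes and homogeneous L\'{e}vy bases, and between increasing natural additive processes on $\mathbb{R}^{+}$ and non-negative L\'{e}vy bases on $\mathbb{R}^{+}$. Write $\overline{L}$ for the homogeneous L\'{e}vy basis with $\overline{L}((s,t])=L_{t}-L_{s}$, which carries the same triplet $(\gamma ,b,\nu )$ as $L$, and $\mathsf{T}$ for the L\'{e}vy basis with $\mathsf{T}((s,t])=T_{t}-T_{s}$; it is here that the \emph{additivity} of $T$ is used, since it is what makes $\mathsf{T}$ a L\'{e}vy basis (for a general increasing $T$ this fails, cf.\ the example $T(A)=U\,Leb(A)$ in Section 2). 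Since $T$ is increasing with $T_{0}=0$, the field $\mathsf{T}$ is a non-negative L\'{e}vy basis, so $\mathsf{T}(\omega ,\cdot )$ is a.s.\ a true measure satisfying the $k=1$ analogue of (\ref{eqn1.8}); as $T$ is independent of $L$, the subordinated L\'{e}vy basis $\overline{L}_{\mathsf{T}}$ of (\ref{eqn1.9.0}) is well defined, built from the meta-time induced by $\mathsf{T}(\omega ,\cdot )$.

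First I would check that $\overline{L}_{\mathsf{T}}$ is exactly the basis encoding of the time-changed process $X$ in (\ref{eqn4.1}). For $\omega $ in the set where $\mathsf{T}(\omega ,\cdot )$ is a true measure, the induced meta-time satisfies $Leb(\varphi (\omega ,(s,t]))=T_{t}(\omega )-T_{s}(\omega )$, and choosing $\phi _{\omega }$ as the right-continuous generalized inverse of $u\mapsto T_{u}(\omega )$ one may take $\varphi (\omega ,(s,t])=(T_{s}(\omega ),T_{t}(\omega )]$, so that
\begin{equation*}
\overline{L}_{\mathsf{T}}(\omega ,(s,t])=\overline{L}(\omega ,(T_{s}(\omega ),T_{t}(\omega )])=L_{T_{t}}(\omega )-L_{T_{s}}(\omega )=X_{t}(\omega )-X_{s}(\omega ),
\end{equation*}
for all $0\le s\le t$ and almost every $\omega $. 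Since $X_{0}=L_{0}=0$, this identifies the law of the process $X$ with the law of the L\'{e}vy basis $\overline{L}_{\mathsf{T}}$ (read off on intervals and extended to $\mathcal{B}_{b}(\mathbb{R}^{+})$ by the usual $\sigma $-additivity and monotone-class arguments). At this point one must assume $\mu :=\mathcal{L}(L_{1})$ is not the Dirac mass at $0$; otherwise $X\equiv 0$ and recovery is impossible, so this is an implicit hypothesis of the statement.

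Next comes the recovery step. Let $\widetilde{T}$ be another increasing additive process, independent of $L$, with $L_{\widetilde{T}}\overset{d}{=}X$ as processes, and let $\widetilde{\mathsf{T}}$ be its associated non-negative L\'{e}vy basis. By the previous paragraph $\overline{L}_{\mathsf{T}}\overset{d}{=}\overline{L}_{\widetilde{\mathsf{T}}}$ as L\'{e}vy bases on $\mathbb{R}^{+}$. Since $\mu \in ID(\mathbb{R})$ has triplet $(\gamma ,b,\nu )$ and is not the Dirac delta at zero, Theorem \ref{TheoremLBtimechange} yields $\mathsf{T}\overset{d}{=}\widetilde{\mathsf{T}}$. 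Reading this back through the bijection --- equivalently, comparing the finite-dimensional distributions $(\mathsf{T}((s_{i},t_{i}]))_{i}=(T_{t_{i}}-T_{s_{i}})_{i}$ and using $T_{0}=\widetilde{T}_{0}=0$ together with the c\`{a}dl\`{a}g property --- gives $T\overset{d}{=}\widetilde{T}$, which is the claimed uniqueness.

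The step I expect to be the main obstacle is the pathwise identity $\overline{L}_{\mathsf{T}}(\omega ,(s,t])=L_{T_{t}}(\omega )-L_{T_{s}}(\omega )$ above: the inverse $\phi _{\omega }$, hence the meta-time $\varphi (\omega ,\cdot )$, is specified only up to a $Leb$-null modification, so one must exhibit a choice sending disjoint intervals to disjoint sets and check that jumps of $T$ do not spoil the identity. This is where one uses that $L$, being a L\'{e}vy process, is stochastically continuous, so that a.s.\ it has no jump at the random, independent level $T_{s}$, and that evaluating the basis $\overline{L}$ on an interval returns the corresponding increment of $L$ irrespective of how the endpoints sit relative to the jumps of $T$. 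A secondary point worth stressing is that $T$ is in general inhomogeneous, so the argument genuinely requires the non-homogeneous subordination result of Theorem \ref{TheoremLBtimechange} (and the injectivity statement of Theorem \ref{Theoremrecovery} on which it rests) rather than the L\'{e}vy-process-only version; beyond these points the argument is a routine passage between the process and the L\'{e}vy-basis descriptions.
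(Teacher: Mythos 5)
Your proposal is correct and follows essentially the same route as the paper: the paper's proof likewise encodes $L$ and $T$ as a homogeneous L\'{e}vy basis and a non-negative L\'{e}vy basis (citing Theorem 4.2 in the Rajput--Rosinski-type reference and Example 3.3 in \cite{BNJP(12)}), identifies $X_{t}=\mathbf{L}_{\mathbf{T}}\left( \left[ 0,t\right] \right) $ a.s., and concludes by Theorem \ref{TheoremLBtimechange}. Your version merely spells out the meta-time identity and the implicit non-degeneracy of $\mathcal{L}(L_{1})$, which the paper leaves to the cited results.
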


\begin{proof}
It is enough to point out that in this case, by Theorem 4.2 in \cite%
{LevyItodecompositionrm} and Example 3.3 in \cite{BNJP(12)}, there exist a
homogeneous L\'{e}vy basis $\mathbf{L}$ and a non-negative L\'{e}vy basis $%
\mathbf{T}$, such that 
\begin{equation*}
X_{t}=\mathbf{L}_{\mathbf{T}}\left( \left[ 0,t\right] \right) ,\text{ \ \
a.s. for all }t>0,
\end{equation*}%
where $\mathbf{L}_{\mathbf{T}}$ is the time-changed L\'{e}vy basis via $%
\mathbf{T}$. Thus, the result follows from the previous theorem.
\end{proof}

\begin{remark}
Note that by reasoning as in Theorem \ref{TheoremLBtimechange}, if $\left(
T_{t}\right) _{t\geq 0}$ is an arbitrary infinitely divisible chronometer
(not necessarily with independent increments), we have that for all $t>0$%
\begin{equation*}
\mathcal{L}\left( X_{t}\right) =\Lambda _{\mu }\left( \mu _{t}\right) ,
\end{equation*}%
where $\mu _{t}$ is the law of $T$ at time $t$. Hence, if there exists
another infinitely divisible chronometer $(\widetilde{T}_{t}) _{t\geq 0}$
such that $\left( L_{T_{t}}\right) _{t\geq 0}\overset{d}{=} (L_{\widetilde{T}%
_{t}}) _{t\geq 0}$, then necessarily for every $t>0$%
\begin{equation*}
T_{t}\overset{d}{=}\widetilde{T}_{t}\text{,}
\end{equation*}%
which in general does not imply that $T\overset{d}{=}\widetilde{T}$.
Therefore, more general mappings than $\Phi _{\mu }$ should be considered,
namely transformations on $ID\left( \mathbb{R}^{\left( 0,\infty \right)
}\right) $, the space of infinitely divisible laws on $\mathbb{R}^{\left(
0,\infty \right) }$, into itself. See \cite{sdfields} for more details. We
leave such a problem for future research.
\end{remark}

\subsection{L\'{e}vy semistationary processes and the recovery problem}

Consider the subclass of L\'{e}vy semistationary processes given by the
formula,%
\begin{equation}
Y_{t}:=\int_{-\infty }^{t}f\left( t-s\right) \sigma _{s}dL_{s}^{T}\text{,\ \
\ }t\in \mathbb{R}\text{.}  \label{eqn5.1}
\end{equation}%
where $f$ is a deterministic function such that $f\left( x\right) =0$ for $%
x\leq 0$, $\sigma $ a c\`{a}dl\`{a}g predictable process and $X^{T}$ is a
two-sided L\'{e}vy process for which%
\begin{equation*}
L_{s}^{T}:=L_{T_{s}}\text{, \ \ }s\geq 0,
\end{equation*}%
with $L$ a L\'{e}vy process with triplet $\left( \gamma ,b,\nu \right) $ and 
$T$ a subordinator with couple $\left( \beta _{0},\rho \right) _{0}$. Note
that the process $\sigma $ introduces stochastic volatility through \textit{%
stochastic amplitude modulation} whereas $T$ introduces stochastic
volatility through \textit{stochastic intensity modulation}. Thus, $\sigma $
changes the height of the jumps and variation of $L$ while $T$ randomizes
the jump times. Observe that $Y$ also can be written as 
\begin{equation*}
Y_{t}=\int_{-\infty }^{t}f\left( t-s\right) dX_{s},\ \ \ t\in \mathbb{R},
\end{equation*}%
where $X$ is the increment semimartingale given by 
\begin{equation*}
X_{t}-X_{s}=\int_{s}^{t}\sigma _{r}dL_{r}^{T},\ \ \ s<t.
\end{equation*}%
Therefore, in this case we may think in terms of two different kinds of
recovery problems. The first, involving $\sigma $ and the second related to $%
T$. If $T_{t}=t$ and $L$ is a Brownian motion then $\sigma $ can be
recovered pathwise. See Theorem 3.1 in \cite%
{RePEc:eee:spapps:v:123:y:2013:i:7:p:2552-2574}. More generally, if $Y$ is 
\textit{invertible}, i.e. $X_{t}$ can be obtained by limit of linear
combinations of $Y$, hence $\sigma $ can be recovered pathwise, because 
\begin{equation*}
\left[ X\right] _{t}=\int_{0}^{t}\sigma _{s}^{2}d\left[ L\right] _{s}^{T}%
\text{, \ \ a.s. }t\geq 0\text{.}
\end{equation*}%
Note that in this general case, the law of $Y$ is no longer infinitely
divisible. Moreover, it is not clear whether $T$ can be identified jointly
with $\sigma $.

Let us consider some examples where the invertibility of $Y$ plays an important
role.

\begin{example}[Ornstein-Uhlenbeck processes]
Put $f\left( x\right) =e^{-x}$ for $x\geq 0$ and $\sigma \equiv 1$. Then $Y$
is an Ornstein-Uhlenbeck process satisfying the Langevin equation 
\begin{equation*}
dY_{t}=Y_{t}dt+dL_{t}^{T}\text{, \ \ }t\geq 0.
\end{equation*}%
This implies that the law of $Y$ is uniquely determined by the law of $L^{T}$%
, but since the law of $L^{T}$ is uniquely determined by the law of $T$ (for
a fixed $L$)$,$ we conclude that the law of $Y$ is uniquely determined by
the one of $T$. Here we can recover (in the distributional sense) $T$ via $Y$%
.
\end{example}

\begin{example}[Semimartingale case]
It is well known that the process $Y$ is not in general a semimartingale.
Suppose that $L$ is a square integrable martingale and $Y$ is a
semimartingale. Then $Y$ admits the following representation (see \cite%
{RePEc:aah:create:2010-18} and \cite{ComteRenault1996}) 
\begin{equation*}
Y_{t}=Y_{0}+\int_{0}^{t}A_{s}ds+f\left( 0+\right) \int_{0}^{t}\sigma
_{s}dL_{T_{s}}\text{, \ \ a.s for }t\geq 0,
\end{equation*}%
where 
\begin{equation*}
A_{s}:=\int_{-\infty }^{s}f^{\prime }\left( s-r\right) \sigma _{r}dL_{T_{r}}%
\text{, \ \ }s\in \mathbb{R}\text{.}
\end{equation*}%
for a suitable function $f$. In this case $Y$ is invertible and $\sigma $
can be recovered pathwise.
\end{example}

The previous examples showed the importance of the invertibility in the
recovery problem for $\mathcal{LSS}$ processes. This means that in the
invertible case, the recovery problem for $Y$ is analogous to the one of $X$%
. When $\sigma \equiv 1$, $X$ is just the subordinated L\'{e}vy process by $%
T $ and in this case the invertibility of $Y$ holds under very weak
conditions.

\begin{theorem}
Suppose that $Y$ is given as in (\ref{eqn5.1}) with $\sigma \equiv 1$.
Assume in addition that $f$ is integrable with non-vanishing Fourier
transform. Then, for a given L\'{e}vy processes $L$, the law of $T$ is
completely determined by the law of $Y$.
\end{theorem}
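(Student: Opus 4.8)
The plan is to exploit the fact that when $\sigma\equiv 1$ the driving noise is itself a L\'{e}vy process, so that $Y$ becomes an IDCMA process, and then to combine the invertibility of such processes with the injectivity of $\Lambda_{\mu}$ established in Theorem~\ref{Theoremrecovery}. Throughout I would assume, as is implicit in the statement and in any case necessary (otherwise $Y\equiv 0$ carries no information about $T$), that $\mu:=\mathcal{L}(L_{1})$ is not the Dirac delta at zero. The first step is to observe that, with $\sigma\equiv 1$, the process $L^{T}=(L_{T_{t}})_{t}$ is by Theorem~\ref{TheoremcqsubLB} (equivalently, by classical subordination) a two-sided L\'{e}vy process, so that $Y_{t}=\int_{\mathbb{R}}f(t-s)\,dL^{T}_{s}$ is an IDCMA process of the form \eqref{eqn1.1} driven by $L^{T}$ (with $\theta=0$). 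By Remark~\ref{Remark3.2} and \eqref{eqn3.2.1}, writing $\mu_{T}:=\mathcal{L}(T_{1})$, one has $\mathcal{L}(L^{T}_{1})=\Lambda_{\mu}(\mu_{T})$.

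The crucial step is then to recover the law of the driving noise $L^{T}$ from the law of $Y$. Since $f$ is integrable with nowhere-vanishing Fourier transform, the IDCMA process $Y$ is \emph{invertible} in the sense of \cite{Sau14}: for every $s<t$ there is a deterministic functional $\Psi_{s,t}$, obtained as a limit in probability of finite linear combinations of the values $(Y_{r})_{r\in\mathbb{R}}$, such that $L^{T}_{t}-L^{T}_{s}=\Psi_{s,t}\big((Y_{r})_{r\in\mathbb{R}}\big)$ almost surely; in particular $\Psi_{s,t}$ is a measurable functional of the path of $Y$. Consequently, if $\widetilde{T}$ is a second subordinator and $\widetilde{Y}$ is the corresponding process built from the same $L$ and $f$, then the equality of the finite-dimensional distributions $Y\overset{d}{=}\widetilde{Y}$ forces $L^{T}\overset{d}{=}L^{\widetilde{T}}$; taking $s=0$, $t=1$ and using $L^{T}_{0}=0$ gives in particular $\mathcal{L}(L^{T}_{1})=\mathcal{L}(L^{\widetilde{T}}_{1})$, i.e.\ $\Lambda_{\mu}(\mu_{T})=\Lambda_{\mu}(\mu_{\widetilde{T}})$.

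To conclude I would invoke Theorem~\ref{Theoremrecovery}: because $\mu$ is not the Dirac delta at zero, $\Lambda_{\mu}$ is one-to-one, so $\mu_{T}=\mu_{\widetilde{T}}$. Since $T$ and $\widetilde{T}$ are subordinators, their common marginal $\mu_{T}$ at time $1$ determines the characteristic pair $(\beta_{0},\rho)_{0}$, hence the law of the entire process, whence $T\overset{d}{=}\widetilde{T}$. This is exactly the claim that the law of $T$ is completely determined by the law of $Y$.

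The hard part will be the invertibility step: one must check that the hypotheses ``$f\in L^{1}$ with $\widehat{f}$ nowhere vanishing'' are precisely those under which the results of \cite{Sau14} guarantee invertibility of $Y$ --- in particular that no extra moment or regularity condition on the driving L\'{e}vy process $L^{T}$ (whose characteristics are given by Theorem~\ref{TheoremcqsubLB}) is needed, and that the reconstruction of $L^{T}$ is genuinely a functional of the finite-dimensional distributions of $Y$, so that equality in law transfers as used above. Everything after that is a routine combination of Theorems~\ref{TheoremcqsubLB} and~\ref{Theoremrecovery}.
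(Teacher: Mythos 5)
Your proposal is correct and follows essentially the same route as the paper: invoke the invertibility result of \cite{Sau14} (Theorem 6 there, under exactly the hypothesis that $f$ is integrable with non-vanishing Fourier transform) to recover the law of $L_{T_t}$ as a weak limit of laws of linear combinations of $Y$, then apply the injectivity of $\Lambda_{\mu}$ (Corollary \ref{injectprob}/Theorem \ref{Theoremrecovery}) to recover $\mathcal{L}(T_t)$, and conclude since a subordinator's law is determined by a single marginal. Your explicit remark that $\mu$ must not be a Dirac delta at zero is a point the paper leaves implicit but which is indeed needed for the injectivity step.
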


\begin{proof}
Let $L$ a fixed L\'{e}vy process with associated law $\mu \in ID\left( 
\mathbb{R}\right) $. From Theorem \ref{injectprob} 
\begin{equation}
\mathcal{L}\left( T_{t}\right) =\Lambda _{\mu }^{-1}\left[ \mathcal{L}\left(
L_{T_{t}}\right) \right] \text{, \ \ }t\in \mathbb{R}\text{.}  \label{eqn5.2}
\end{equation}%
Moreover, thanks to Theorem 6 in \cite{Sau14}, $Y$ is invertible. Thus, for
every $t\in \mathbb{R}$, there are two sequences $\left( \theta
_{j}^{t}\right) _{j=1}^{n}$ and $\left( s_{j}^{t}\right) _{j=1}^{n}$, such
that 
\begin{equation*}
\mathcal{L}\left( L_{T_{t}}\right) =\lim_{n\rightarrow \infty }\mathcal{L}%
\left( \sum_{j=1}^{n}\theta _{j}^{t}Y_{s_{j}^{t}}\right) ,
\end{equation*}%
where the limit is taken in the weak sense. Since the law of $T$ is
completely determined by the law of $T_{t}$ for some $t$, the result follows
by inserting the previous equation into (\ref{eqn5.2}).
\end{proof}

\begin{remark}
Observe that in general the law of $Y$ is not uniquely determined by the law
of $T$. If this was true, it would imply that the mapping $\mu \mapsto 
\mathcal{L}\left( \int_{\mathbb{R}}f\left( s\right) dL_{s}^{\mu }\right) $,
where $\mu \in ID\left( \mathbb{R}\right) $ and $L^{\mu }$ is the L\'{e}vy
process associated with $\mu $, is one-to-one, which is not true in general.
\end{remark}

\begin{example}[Gamma kernel]
Let $f\left( x\right) =e^{-x}x^{\alpha }$ for $x\geq 0$ and $\alpha >-1$. We
have that the corresponding $\mathcal{LSS}$ process is invertible as Theorem
6 in \cite{Sau14} shows. When $\alpha =0$, $Y$ is just an Ornstein-Uhlenbeck
process and in this case the mapping $\mu \mapsto \mathcal{L}\left(
\int_{0}^{\infty }e^{-s}dL_{s}^{\mu }\right) $, where $\mu \in ID\left( 
\mathbb{R}\right) $ and $L^{\mu }$ is the L\'{e}vy process associated with $%
\mu $, is in a bijection with the subclass of selfdecomposable
distributions. Denoting by $\Psi ^{0}$ such a mapping we see that%
\begin{equation*}
\mathcal{L}\left( \int_{0}^{\infty }e^{-s}dL_{T_{s}}\right) =\Psi ^{0}\circ
\Lambda \left( \mu _{L},\mu _{T}\right) ,
\end{equation*}%
where $\mu _{L}\sim L_{1}$, $\mu _{T}\sim T_{1}$, with $L$ and $T$ a L\'{e}%
vy process and $T$ a subordinator, respectively. In particular, for a fixed L%
\'{e}vy process the mapping $\Psi ^{0}\circ \Lambda _{\mu _{L}}$ is a
bijection between the subclass of $ID\left( \mathbb{R}^{+}\right) $ for
which its L\'{e}vy measure has log-moments outside of zero, and the class of
selfdecomposable distributions. In this case, the law of $Y$ is uniquely
determined by the law of $T$ and viceversa.\newline
On the other hand, when $-1<\alpha <0$, the associated $\mathcal{LSS}$
process is no longer a semimartingale and it is not bounded with positive
probability. Moreover, the mapping $\mu \mapsto \mathcal{L}\left(
\int_{0}^{\infty }e^{-s}s^{\alpha }dL_{s}^{\mu }\right) $ maps a subclass of 
$ID\left( \mathbb{R}\right) $ into a proper subset of selfdecomposable
distributions. As before, let $\Psi ^{\alpha }$ denote such a
transformation, then for a fixed L\'{e}vy process $L$ for which $\mu _{L}\in
ID\left( \mathbb{R}\right) $ is its law at time $1$, we have that%
\begin{equation*}
\mathcal{L}\left( \int_{0}^{\infty }e^{-s}s^{\alpha }dL_{T_{s}}\right) =\Psi
^{\alpha }\circ \Lambda _{\mu _{L}}\left( \mu _{T}\right) .
\end{equation*}%
In \cite{onlssgamma}, it was shown that under some conditions, $\Psi
^{\alpha }$ is one-to-one. Therefore, for every $-1<\alpha \leq 0$, the
marginal law of the related $\mathcal{LSS}$ process is uniquely determined
by the law of the time change $T$.
\end{example}

\subsubsection*{Acknowledgement}

We wish to thank Ole E. Barndorff-Nielsen and Victor M.~P\'{e}rez Abreu
C.~for helpful comments on an earlier draft of this article. This work was
carried out while the first author visited Imperial College London. He
gratefully acknowledges the hospitality and financial support during this
research visit. A.~E.~D.~Veraart acknowledges financial support by a Marie
Curie FP7 Integration Grant within the 7th European Union Framework
Programme.

\bibliographystyle{apa}
\bibliography{bibliografia}

\end{document}